\documentclass[11pt]{article}
\usepackage{amsmath, amssymb,amscd,comment,appendix,lpic,multicol}
\usepackage[mathscr]{eucal}
\usepackage{amscd}
\usepackage{amsthm}
\usepackage{thmtools}
\usepackage{cite}
\usepackage{stmaryrd}
\usepackage{enumerate}
\usepackage{url}
\usepackage{macros}
\usepackage{thms}

\usepackage[margin=1in]{geometry}

\def\Dist{\operatorname{Dist}}
\def\Int{\operatorname{Int}}

\newcommand\subsetsim{\mathrel{%
  \ooalign{\raise0.3ex\hbox{\scalebox{0.9}{$\subset$}}\cr\hidewidth\raise-0.8ex\hbox{\scalebox{0.9}{$\sim$}}\hidewidth\cr}}}


\begin{document}

\title{Distance structures for generalized metric spaces}

\date{October 13, 2016}

\author{
Gabriel Conant\\
University of Notre Dame\\
gconant@nd.edu
}

\maketitle

\begin{abstract}
Let $\cR=(R,\p,\leq,0)$ be an algebraic structure, where $\p$ is a commutative binary operation with identity $0$, and $\leq$ is  a translation-invariant total order with least element $0$. Given a distinguished subset $S\seq R$, we define the natural notion of a ``generalized" $\cR$-metric space, with distances in $S$. We study such metric spaces as first-order structures in a relational language consisting of a distance inequality for each element of $S$. We first construct an ordered additive structure $\cS^*$ on the space of quantifier-free $2$-types consistent with the axioms of $\cR$-metric spaces with distances in $S$, and show that, if $A$ is an $\cR$-metric space with distances in $S$, then any model of $\Th(A)$ logically inherits a canonical $\cS^*$-metric. Our primary application of this framework concerns countable, universal, and homogeneous metric spaces, obtained as generalizations of the rational Urysohn space. We adapt previous work of Delhomm\'{e}, Laflamme, Pouzet, and Sauer to fully characterize the existence of such spaces. We then fix a countable totally ordered commutative monoid $\cR$, with least element $0$, and consider $\cU_\cR$, the countable Urysohn space over $\cR$. We show that quantifier elimination for $\Th(\cU_\cR)$ is characterized by continuity of addition in $\cR^*$, which can be expressed as a first-order sentence of $\cR$ in the language of ordered monoids. Finally, we analyze an example of Casanovas and Wagner in this context. 
\end{abstract}

\numberwithin{figure}{section}

The fundamental objects of interest in this paper are metric spaces. Specifically, we study the behavior of metric spaces as combinatorial structures in relational languages. This is the setting of a vast body of literature (e.g. \cite{CaWa}, \cite{DLPS}, \cite{vThebook}, \cite{Sol}, \cite{TeZibdd}, \cite{TeZiSIR}) focusing on topological dynamics of automorphism groups and Ramsey properties of countable homogeneous structures. Our goal is to develop the model theory of metric spaces in this setting. We face the immediate obstacle that the notion of ``metric space" is not very well controlled by classical first-order logic, in the sense that models of the theory of a metric space need not be metric spaces. Indeed, this is a major motivation for working in continuous logic and model theory for \textit{metric structures}, which are always complete metric spaces with the metric built into the logic (see \cite{BBHU}). However, we wish to study the model theory of (possibly incomplete) metric spaces treated as combinatorial structures (specifically, labeled graphs where complexity is governed by the triangle inequality). In some sense, we will sacrifice the global topological structure of metric spaces for the sake of understanding local combinatorial complexity. We will also develop an algebraic structure on distances sets of metric spaces, as a means to analyze this combinatorial complexity. 

Another benefit of our framework is that it will be flexible enough to encompass generalized metric spaces with distances in arbitrary ordered additive structures. This setting appears often in the literature, with an obvious example of extracting a metric from a valuation. Other examples include \cite{Nar}, where Narens considers topological spaces ``metrizable" by a generalized metric over an ordered abelian group, as well as \cite{MoSh}, where Morgan and Shalen use metric spaces over ordered abelian groups to generalize the notion of an $\R$-tree.  Also, in \cite{CaWa}, Casanovas and Wagner use the phenomenon of ``infinitesimal distance" to construct a theory without the strict order property that does not eliminate hyperimaginaries. We will analyze this example at the end of Section \ref{sec:ex}.

We will consider metric spaces as first-order \textit{relational} structures. However, when working outside of this first-order setting, it will usually be much more convenient to think of metric spaces as ``sorted" structures consisting of a set of points together with a distance function into a set of distances. Distinguishing between these two viewpoints will be especially important, and so we will very carefully explain the precise first-order relational setting in which we will be working. This explanation requires the following basic definitions. 

\begin{definition}\label{def:DM}
Let $\LDS=\{\p,\leq,0\}$ be the \textbf{language of ordered monoids} consisting of a binary function symbol $\p$, a binary relation symbol $\leq$, and a constant symbol $0$. Fix an $\LDS$-structure $\cR=(R,\p,\leq,0)$. 
\begin{enumerate}
\item $\cR$ is a \textbf{distance magma} if
\begin{enumerate}[$(i)$]
\item (\textit{totality}) $\leq$ is a total order on $R$;
\item (\textit{positivity}) $r\leq r\p s$ for all $r,s\in R$;
\item (\textit{order}) for all $r,s,t,u\in R$, if $r\leq t$ and $s\leq u$ then $r\p s\leq t\p u$;
\item (\textit{commutativity}) $r\p s=s\p r$ for all $r,s\in R$;
\item (\textit{unity}) $r\p 0=r=0\p r$ for all $r\in R$.
\end{enumerate}
\item $\cR$ is a \textbf{distance monoid} if it is a distance magma and
\begin{enumerate}[\hspace{-5pt}$(vi)$]
\item (\textit{associativity}) $(r\p s)\p t=r\p(s\p t)$ for all $r,s,t\in R$.
\end{enumerate}
\end{enumerate}
\end{definition}

Note that if $\cR$ is a distance magma, then it follows from the \emph{positivity} and \emph{unity} axioms that $0$ is the least element of $R$. Moreover, given $r,s,t\in R$ if $r\leq s$ then $r\p t\leq s\p t$ by the \emph{order} axiom. However, it is worth emphasizing that this translation-invariance is not strict: we may have $r<s$, while $r\p t=s\p t$. In particular, a distance magma may be finite, in which case if $s\in R$ is the maximal element then $r\p s=s$ for all $r\in R$. See Example \ref{introEx} below.

\begin{remark}Recall that, according to \cite{Bour}, a \textit{magma} is simply a set together with a binary operation. After consulting standard literature on ordered algebraic structures (e.g. \cite{Cliff}), one might refer to a distance magma as a \textit{totally and positively ordered commutative unital magma}, and a distance monoid as a \textit{totally and positively ordered commutative monoid}. So our terminology is partly chosen for the sake of brevity. We are separating the associativity axiom because it is not required for our initial results and, more importantly, associativity will frequently characterize some useful combinatorial property of metric spaces (see Proposition \ref{compProps}$(e)$, Proposition \ref{4VCassoc}, Exercise \ref{freeAmal}). 
\end{remark}

Next, we observe that the notion of a distance magma allows for a reasonable definition of a generalized metric space. Definitions of a similar flavor can be found in \cite{AlTr}, \cite{MoSh},  and \cite{Nar}. 

\begin{definition}\label{def:MS}
Suppose $\cR=(R,\p,\leq,0)$ is a distance magma. Fix a nonempty set $A$ and a function $d:A\times A\func R$. We call $(A,d)$ an \textbf{$\cR$-colored space}, and define the \textbf{distance set of $(A,d)$}, denoted $\Dist(A,d)$, to be the image of $d$ in $R$. Given an $\cR$-colored space $(A,d)$, we say $d$ is an \textbf{$\cR$-metric on $A$} if
\begin{enumerate}[\hspace{10pt}$(i)$]
\item for all $x,y\in A$, $d(x,y)=0$ if and only if $x=y$;
\item for all $x,y\in A$, $d(x,y)=d(y,x)$;
\item for all $x,y,z\in A$, $d(x,z)\leq d(x,y)\p d(y,z)$.
\end{enumerate}
In this case, $(A,d)$ is an \textbf{$\cR$-metric space}. A \textbf{generalized metric space} is an $\cR$-metric space for some distance magma $\cR$.
\end{definition}

We now detail the first-order setting of this paper. Given a distance magma $\cR$, we first define relational languages suitable for studying $\cR$-metric spaces with distances in some distinguished subset of $R$. In particular, given $S\seq R$, with $0\in S$, we define a first-order language $\cL_S=\{d(x,y)\leq s:s\in S\}$, where, for each $s\in S$, $d(x,y)\leq s$ is a binary relation symbol in the variables $x$ and $y$.

For later purposes, we describe the interpretation of arbitrary \emph{$\cR$-colored} spaces as $\cL_S$-structures. Fix a distance magma $\cR=(R,\p,\leq,0)$ and a subset $S\seq R$, with $0\in S$. Given an $\cR$-colored space $\cA=(A,d_A)$, we interpret $\cA$ as an $\cL_S$-structure by interpreting the symbol $d(x,y)\leq s$ as $\{(a,b)\in A^2:d_A(a,b)\leq s\}$. We let $\Th_{\cL_S}(\cA)$ denote the complete $\cL_S$-theory of the resulting $\cL_S$-structure. \textbf{All model theoretic statements and results about generalized metric spaces will be in this relational context.}

Recall that we have another language, namely, $\LDS$. Many of the results in this paper and its sequel \cite{CoDM2} associate model theoretic properties of generalized metric spaces with algebraic and combinatorial properties of distance magmas, which can often be expressed in a first-order way using $\LDS$. Therefore, the reader should consider $\LDS$ as an auxiliary language used mostly for convenience. Altogether, it is worth emphasizing again that throughout this paper, we will be working with two different classes of structures. The primary class is that of generalized metric spaces, and our main goal is to develop the model theory of these objects in the relational setting discussed above. The secondary class of structures is the class of distance magmas. We will not focus on this class from a model theoretic perspective.

One motivation for the study of generalized distance structures comes from the wide variety of examples this notion encompasses. The following are a few examples arising naturally in the literature.

\begin{example}\label{introEx}$~$
\begin{enumerate}
\item If $\cR=(\R^{\geq0},+,\leq,0)$ then $\cR$-metric spaces coincide with usual metric spaces. In this case, we refer to $\cR$-metric spaces as \textit{classical metric spaces}.
\item If $\cR=(\R^{\geq0},\max,\leq,0)$ then $\cR$-metric spaces coincide with classical ultrametric spaces.
\item Given $S\seq\R^{\geq0}$, with $0\in S$, we consider classical metric spaces with distances restricted to $S$. This is the context of \cite{DLPS}, which has inspired much of the following work (especially Section \ref{sec:4VC}). If $S$ satisfies the property that, for all $r,s\in S$, the subset $\{x\in S:x\leq r+s\}$ contains a maximal element, then we have a distance magma $\cS=(S,+_S,\leq,0)$, where $r+_S s:=\max\{x\in S:x\leq r+s\}$. In this case, $\cS$-metric spaces are precisely classical metric spaces with distances restricted to $S$. This situation is closely studied by Sauer in \cite{Sa13} and \cite{Sa13b}. In Section \ref{sec:closure}, we develop this example in full generality.
\end{enumerate}
\end{example}

A more important motivation for considering distance structures and metric spaces at this level of generality is that we will obtain a class of structures invariant under elementary equivalence. Roughly speaking, we will show that any model of the $\cL_S$-theory of an $\cR$-metric space with distances in $S$ is itself a generalized metric space over a canonical distance magma $\cS^*$, which depends only on $S$ and $\cR$, but often contains distances not in $S$. For example, suppose $\cA=(A,d_A)$ is a classical metric space over $(\R^{\geq0},+,\leq,0)$, which contains points of arbitrarily small distances. Then we can use compactness to build models of the $\cL_{\Q^{\geq0}}$-theory of $\cA$, which contain distinct points infinitesimally close together. Therefore, when analyzing these models, we must relax the notion of distance and consider a ``nonstandard" extension of the distance set. The first main result of this paper is that such an extension can always be found.

\begin{alphatheorem}\label{prethm}
Let $\cR$ be a distance magma. For any $S\seq R$, with $0\in S$, there is an $\LDS$-structure $\cS^*=(S^*,\cps,\leq^*,0)$ satisfying the following properties.
\begin{enumerate}[$(a)$]
\item $\cS^*$ is a distance magma.
\item $(S^*,\leq^*)$ is an extension of $(S,\leq)$, and $S$ is dense in $S^*$ (with respect to the order topology).
\item Given $r,s\in S$, if $r\p s\in S$ then $r\cps s=r\p s$.
\item Suppose $\cA$ is an $\cR$-metric space, with $\Dist(\cA)\seq S$. Fix $M\models \Th_{\cL_S}(\cA)$.
\begin{enumerate}[\hspace{-5pt}$(i)$]
\item For all $a,b\in M$, there is a unique $\alpha=\alpha(a,b)\in S^*$ such that, given any $s\in S$, we have $M\models d(a,b)\leq s$ if and only if $\alpha\sleq s$.
\item If $d_M:M\times M\func S^*$ is defined such that $d_M(a,b)=\alpha(a,b)$, then $(M,d_M)$ is an $\cS^*$-metric space.
\end{enumerate}
\end{enumerate}
\end{alphatheorem}

The object $\cS^*$ from Theorem \ref{prethm} is obtained by defining a distance magma structure on the space of quantifier-free $2$-types consistent with a natural set of axioms for $\cR$-metric spaces with distances in $S$. We will also give explicit combinatorial descriptions of the set $S^*$ and the operation $\cps$. Moreover, we will isolate conditions under which, in part $(d)$ of this theorem, the requirement $\Dist(\cA)\seq S$ can be weakened (for example, in order to keep $\cL_S$ countable). Theorem \ref{prethm} appears again in its final form as Theorem \ref{thm:models}.

We then consider the existence of an \textit{$\cR$-Urysohn space over $S$}, denoted $\URS{S}{\cR}$, where $S$ is a countable subset of some distance magma $\cR$. When it exists, $\URS{S}{\cR}$ is a countable, ultrahomogeneous $\cR$-metric space with distance set $S$, which is universal for finite $\cR$-metric spaces with distances in $S$. In \cite{DLPS}, Delhomm\'{e}, Laflamme, Pouzet, and Sauer characterize the existence of $\URS{S}{\cR}$ in the case of $\cR=(\R^{\geq0},+,\leq,0)$. In Section \ref{sec:4VC} we show that after appropriate translation, the same characterization goes through for any $\cR$. A corollary is that given a \emph{countable} distance magma $\cR=(R,\p,\leq,0)$, the $\cR$-Urysohn space $\cU_\cR:=\URS{R}{\cR}$ exists if and only if $\p$ is associative. Therefore, in Section \ref{sec:QE} we fix a countable distance \emph{monoid} $\cR$ and consider $\TUS{\cR}:=\Th_{\cL_R}(\cU_\cR)$, the first-order $\cL_R$-theory of $\cU_\cR$. Our second main result characterizes quantifier elimination for $\TUS{\cR}$ in terms of continuity in $\cR^*=(R^*,\cp_R,\sleq,0)$. 

\begin{alphatheorem}\label{prethm2}
If $\cR$ is a countable distance monoid then $\TUS{\cR}$ has quantifier elimination if and only if, for all $s\in R$, the function $x\mapsto x\cp_R s$, from $R^*$ to $R^*$, is continuous with respect to the order topology on $R^*$.
\end{alphatheorem}

This theorem appears again as Theorem \ref{QE}. A corollary of this result is the existence of an $\LDS$-sentence $\vphi$ such that, if $\cR$ is a countable distance monoid, then $\TUS{\cR}$ has quantifier elimination if and only if $\cR\models\vphi$. In Section \ref{sec:ex}, we will see that quantifier elimination holds for most natural examples found in previous literature, including the rational Urysohn space and sphere. It is interesting to note that a demonstration of quantifier elimination (in a discrete relational language) for these classical examples does not appear explicitly in previous literature. The closest related result is \cite{CaWa}, in which Casanovas and Wagner demonstrate quantifier elimination for a certain theory, which they also show does not have the strict order property or elimination of hyperimaginaries. A consequence of the framework developed in this paper is that their theory is precisely that of the rational Urysohn sphere (see Section \ref{sec:ex}).  

Note that if $\cR$ is finite then quantifier elimination for $\Th(\cU_\cR)$ follows from standard results on $\aleph_0$-categorical \Fraisse\ limits. However, if $\cR$ is infinite then $\Th(\cU_\cR)$ is not $\aleph_0$-categorical. In this situation, quantifier elimination for \Fraisse\ limits can fail (see Example \ref{noQE}). Therefore, Theorem \ref{prethm2} uncovers a class of possibly non-$\aleph_0$-categorical \Fraisse\ limits in which quantifier elimination  is characterized by natural analytic behavior of the structure. 

The characterization of quantifier elimination for $\Th(\cU_\cR)$ also initiates a program of study concerning the relationship between model theoretic properties of $\cU_\cR$ and algebraic properties of $\cR$. This is the subject of the sequel to this paper \cite{CoDM2}. The result is a rich class of first-order structures without the strict order property, which represent a wide range of complexity in examples both classical and exotic (e.g. stable theories of refining equivalence relations as ultrametric Urysohn spaces; the simple, unstable random graph as the Urysohn space over $\{0,1,2\}$; and the rational Urysohn space, which has the strong order property). Moreover, these measures of complexity are characterized in \cite{CoDM2} by natural algebraic and combinatorial properties of the monoid $\cR$.

We give a brief summary of the paper. In Section \ref{sec:FOS}, we axiomatize generalized metric spaces. In Sections \ref{sec:S*} and \ref{sec:FOL}, we construct $\cS^*$ and prove Theorem \ref{prethm}. Section \ref{sec:closure} develops specific analytic features of $\cS^*$, which will be needed for later results. In Section \ref{sec:4VC} we characterize the existence of generalized Urysohn spaces; and we prove Theorem \ref{prethm2} in Section \ref{sec:QE}. Section \ref{sec:ex} contains examples.

\section{Axioms for Generalized Metric Spaces}\label{sec:FOS}

Our first main goal is to construct the structure $(S^*,\cps,\leq^*,0)$ described in Theorem \ref{prethm}, where $S$ is some subset of a distance magma $\cR=(R,\p,\leq,0)$. Each step of the construction is motivated by an attempt to capture the first-order theory of $\cR$-metric spaces, and so we first formulate axioms for these structures.

\begin{definition}\label{def:TMS}
Suppose $\cR=(R,\p,\leq,0)$ is a distance magma. Fix $S\seq R$, with $0\in S$.
\begin{enumerate}
\item Recall that we defined the first-order language $\cL_S=\{d(x,y)\leq s:s\in S\}$ where, for each $s\in S$, $d(x,y)\leq s$ is a binary relation symbol in the variables $x$ and $y$. Let $d(x,y)>s$ denote the negation $\neg (d(x,y)\leq s)$.
\item Let $\TMS{S}{\cR}$ denote the union of the following schemes of $\cL_S$-sentences:
\begin{enumerate}[\hspace{10pt}(MS1)]
\item $\forall x\forall y(d(x,y)\leq 0\leftrightarrow x=y)$;
\item for all $s\in S$,
$$
\forall x\forall y(d(x,y)\leq s\leftrightarrow d(y,x)\leq s);
$$
\item for all $r,s,t\in S$ such that there is no $x\in S$ with $t<x\leq r\p s$,
$$
\forall x\forall y\forall z((d(x,y)\leq r\wedge d(y,z)\leq s)\rightarrow d(x,z)\leq t);
$$
\item if $S$ has a maximal element $s$,
$$
\forall x\forall y ~d(x,y)\leq s.
$$
\end{enumerate}
\end{enumerate}
\end{definition}

\begin{remark}\label{rem:cutaxiom}
Let $\cR$ be a distance magma, and fix $S\seq R$ with $0\in S$. From (MS1) and (MS3) we deduce that for all $r,s\in S$, if  $r\leq s$ then
$$
\TMS{S}{\cR}\models \forall x\forall y(d(x,y)\leq r\rightarrow d(x,y)\leq s).
$$
In particular, for all $s\in S$, $\TMS{S}{\cR}\models \forall x~d(x,x)\leq s$. It follows that there is a unique quantifier-free $1$-type (with no parameters) consistent with $\TMS{S}{\cR}$. 
\end{remark}

It is not difficult to see that $\cR$-metric spaces, with distances in $S$, satisfy the axioms in $\TMS{S}{\cR}$.  However, it will be helpful in later work to know when $\cR$-metric spaces, with distances possibly outside of $S$, still satisfy $\TMS{S}{\cR}$. Toward this end, we first define a notion of approximation, which captures the extent to which atomic $\cL_S$-formulas can distinguish distances in $R$.

\begin{definition}\label{def:Sapprox} Suppose $\cR=(R,\p,\leq,0)$ is a distance magma. Fix $S\seq R$, with $0\in S$.
\begin{enumerate}
\item Define
$$
\Int(S,\cR)=\{\{0\}\}\cup\{(r,s]:r,s\in S,~r<s\},
$$
where, given $r,s\in S$ with $r<s$, $(r,s]$ denotes the interval $\{x\in R:r<x\leq s\}$. (These sets are chosen to reflect quantifier-free $\cL_S$-formulas of the form $``r<d(x,y)\leq s":=(d(x,y)>r)\wedge (d(x,y)\leq s)$.)
\item Given $X\seq R$, a function $\Phi:X\func \Int(S,\cR)$ is an \textbf{$(S,\cR)$-approximation of $X$} if $x\in\Phi(x)$ for all $x\in X$. When $\Phi(x)\neq\{0\}$, we use the notation $\Phi(x)=(\Phi^-(x),\Phi^+(x)]$.
\item Suppose $(x_1,\ldots,x_n)\in R^n$ and $\Phi$ is an $(S,\cR)$-approximation of $\{x_1,\ldots,x_n\}$. Let $N_\Phi(x_1,\ldots,x_n)$ denote $\Phi(x_1)\times\ldots\times\Phi(x_n)\seq R^n$. \end{enumerate}
\end{definition}

Note that if $\Phi$ is an $(S,\cR)$-approximation of $X\seq R$, and $0\in X$, then we must have $\Phi(0)=\{0\}$. Therefore, whenever defining a specific $(S,\cR)$-approximation $\Phi$, we will always tacitly define $\Phi(0)=\{0\}$, and let $\Phi^+(0)=0$.

Next, we define a condition on $\cR$-metric spaces $\cA$ and sets $S\seq R$, which will ensure $\cA\models \TMS{S}{\cR}$.

\begin{definition}\label{def:approx}
Suppose $\cR=(R,\p,\leq,0)$ is a distance magma.
\begin{enumerate}
\item A triple $(r,s,t)\in R^3$ is an \textbf{$\cR$-triangle} if $r\leq s\p t$, $s\leq r\p t$, and $t\leq r\p s$. 

\item Given $S\seq R$, let $\Delta(S,\cR)$ denote the set of $\cR$-triangles in $S^3$.

\item Given an $\cR$-metric space $\cA=(A,d_A)$, let $\Delta(\cA)$ denote the set of $\cR$-triangles of the form $(d_A(a,b),d_A(b,c),d_A(a,c))$ for some $a,b,c\in A$.  

\item Fix an $\cR$-metric space $\cA$ and a subset $S\seq R$. We write $\Delta(\cA)\subsetsim\Delta(S,\cR)$ if, for all $(r,s,t)\in\Delta(\cA)$,
\begin{enumerate}[$(i)$]
\item there is an $(S,\cR)$-approximation of $\{r,s,t\}$, and
\item $N_\Phi(r,s,t)\cap\Delta(S,\cR)\neq\emptyset$ for any $(S,\cR)$-approximation $\Phi$ of $\{r,s,t\}$.
\end{enumerate}
\end{enumerate}
\end{definition}

\begin{remark}Fix a distance magma $\cR$, an $\cR$-metric space $\cA$, and $S\seq R$.
\begin{enumerate}
\item Definition \ref{def:approx}$(4i)$ holds for all $(r,s,t)\in\Delta(\cA)$ if and only if $\Dist(\cA)$ is contained in the convex closure of $S$ in $R$.  
\item $\Dist(\cA)\seq S$ if and only if $\Delta(\cA)\seq\Delta(S,\cR)$.
\item If $\Dist(\cA)\seq S$ then $\Delta(\cA)\subsetsim\Delta(S,\cR)$.
\end{enumerate}
\end{remark}

\newpage

\begin{example}\label{ex:metDense}
\item Let $\cR=(\R^{\geq0},+,\leq,0)$ and fix a classical metric space $\cA$. Then $\Delta(\cA)\subsetsim\Delta(\N,\cR)$ and $\Delta(\cA)\subsetsim\Delta(\Q^{\geq0},\cR)$. If $\Dist(\cA)\seq[0,1]$ then $\Delta(\cA)\subsetsim\Delta(\{0,\frac{1}{n},\frac{2}{n},\ldots,1\},\cR)$ for any $n>0$.
\end{example}

\begin{proposition}\label{convAx}
Let $\cR=(R,\p,\leq,0)$ be a distance magma and fix an $\cR$-metric space $\cA$. If $S\seq R$ and $\Delta(\cA)\subsetsim\Delta(S,\cR)$, then $\cA\models\TMS{S}{\cR}$.
\end{proposition}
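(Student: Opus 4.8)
The plan is to verify each of the four axiom schemes of $\TMS{S}{\cR}$ in the $\cL_S$-structure associated to $\cA=(A,d_A)$. Axioms (MS1) and (MS2) are immediate and use no hypothesis on $S$: since $0$ is the least element of $R$, we have $d_A(a,b)\leq 0$ iff $d_A(a,b)=0$ iff $a=b$, which gives (MS1); and (MS2) is just the symmetry of $d_A$. For (MS4), suppose $S$ has a maximal element $M$ and fix $a,b\in A$. Taking points $a,b,a$ shows $(d_A(a,b),d_A(b,a),d_A(a,a))=(d_A(a,b),d_A(a,b),0)\in\Delta(\cA)$, so by Definition \ref{def:approx}$(4i)$ there is an $(S,\cR)$-approximation of $\{d_A(a,b),0\}$; in particular $d_A(a,b)$ lies in some interval $(u',v']\in\Int(S,\cR)$ with $v'\in S$, whence $d_A(a,b)\leq v'\leq M$. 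Thus the real content is (MS3).

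For (MS3), fix $r,s,t\in S$ such that no $u\in S$ satisfies $t<u\leq r\p s$, and fix $a,b,c\in A$ with $d_A(a,b)\leq r$ and $d_A(b,c)\leq s$; I must show $d_A(a,c)\leq t$. Write $p=d_A(a,b)$, $q=d_A(b,c)$, $w=d_A(a,c)$, so that $(p,q,w)\in\Delta(\cA)$ and $w\leq p\p q$ by the triangle inequality. Suppose toward a contradiction that $w\not\leq t$, i.e.\ $t<w$ by totality of $\leq$. The strategy is to build a carefully chosen $(S,\cR)$-approximation $\Phi$ of $\{p,q,w\}$ and feed it to Definition \ref{def:approx}$(4ii)$ in order to manufacture a forbidden element of $S$ in the cut $(t,r\p s]$.

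Concretely, I choose $\Phi$ to force the approximating triangle to have its first two sides bounded by $r$ and $s$ and its third side strictly above $t$. Using Definition \ref{def:approx}$(4i)$, fix some $\mu\in S$ with $w\leq\mu$. For each value $v\in\{p,q,w\}$, set $\Phi(v)=\{0\}$ if $v=0$, and otherwise $\Phi(v)=(\ell(v),u(v)]$, where $u(v)$ is the minimum of the applicable upper bounds (namely $r$ if $v=p$, $s$ if $v=q$, and $\mu$ if $v=w$) and $\ell(v)=t$ if $v=w$ while $\ell(v)=0$ otherwise. Since $p\leq r$, $q\leq s$, $w\leq\mu$, and $t<w$, one checks $\ell(v)<v\leq u(v)$ with $\ell(v),u(v)\in S$, so each $\Phi(v)\in\Int(S,\cR)$ and $v\in\Phi(v)$; hence $\Phi$ is a genuine $(S,\cR)$-approximation of $\{p,q,w\}$. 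The only subtlety is that coincidences among $p,q,w$ must be assigned a single interval: this is precisely why $u(v)$ and $\ell(v)$ are defined as functions of the value $v$ rather than of position, and one must check that the combined upper bound (a minimum, still lying above the common value) and the lower bound $t$ (used exactly when $w$ is among the coincident values, and then $t<w=v$) remain compatible.

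Now Definition \ref{def:approx}$(4ii)$ yields $(p',q',w')\in N_\Phi(p,q,w)\cap\Delta(S,\cR)$, so $p',q',w'\in S$ form an $\cR$-triangle with $p'\in\Phi(p)$, $q'\in\Phi(q)$, $w'\in\Phi(w)$. By construction $p'\leq u(p)\leq r$, $q'\leq u(q)\leq s$, and $w'>\ell(w)=t$. Since $(p',q',w')$ is an $\cR$-triangle, $w'\leq p'\p q'$, and the order axiom gives $p'\p q'\leq r\p s$; hence $t<w'\leq r\p s$ with $w'\in S$, contradicting the hypothesis on $r,s,t$. Therefore $w=d_A(a,c)\leq t$, establishing (MS3). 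The main obstacle is exactly this construction and verification of $\Phi$: (MS1), (MS2), (MS4) are routine, but in (MS3) the contradiction only materializes once the approximation intervals are chosen to pin the sides of the approximating triangle against the bounds $r$, $s$, and $t$ simultaneously, while respecting any coincidences among $p$, $q$, $w$.
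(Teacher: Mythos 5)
Your proof is correct and takes essentially the same route as the paper's: (MS1) and (MS2) are immediate, (MS4) follows from Definition \ref{def:approx}$(4i)$, and (MS3) is proved by contradiction by building an $(S,\cR)$-approximation of $\{p,q,w\}$ with upper endpoints pinned below $r$ and $s$ and lower endpoint $t$ at $w$, so that $(4ii)$ yields a triangle $(p',q',w')\in\Delta(S,\cR)$ with $t<w'\leq p'\p q'\leq r\p s$, contradicting the choice of $r,s,t$. The only difference is cosmetic: you explicitly verify that the approximation remains well defined when $p$, $q$, $w$ coincide, a detail the paper's proof leaves implicit.
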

\begin{proof}
The axiom schemes (MS1) and (MS2) are immediate; and (MS4) follows by Definition \ref{def:approx}$(4i)$. So it remains to verify axiom scheme (MS3). Fix $r,s,t\in S$ such that there is no $x\in S$ with $t<x\leq r\p s$. Suppose $a,b,c\in A$, with $\cA\models d(a,b)\leq r\wedge d(b,c)\leq s$. Let $d_A(a,b)=u$, $d_A(b,c)=v$, and $d_A(a,c)=w$. Then we have $u\leq r$ and $v\leq s$, and we want to show $w\leq t$. Suppose, toward a contradiction, that $t<w$. Using \ref{def:approx}$(4i)$, we may define an $(S,\cR)$-approximation $\Phi$ of $\{u,v,w\}$ such that $\Phi^+(u)=r$, $\Phi^+(v)=s$, and $\Phi^-(w)=t$. By \ref{def:approx}$(4ii)$, there is some $(r',s',t')\in N_\Phi(u,v,w)\cap \Delta(S,\cR)$. Then $t<t'\leq r'\p s'\leq r\p s$, which contradicts the choice of $r,s,t\in S$.
\end{proof}

Suppose $\cR$ is a distance magma and $S\seq R$, with $0\in S$. The distance magma $\cS^*$ from Theorem \ref{prethm} will have the property that any $\cS^*$-metric space satisfies $\TMS{S}{\cR}$ and, moreover, any $\caL_S$-structure satisfying $\TMS{S}{\cR}$ can be equipped with an $\cS^*$-metric in a coherent and canonical way. In other words, $\TMS{S}{\cR}$ axiomatizes the class of $\cS^*$-metric spaces (see Proposition \ref{axModels} for the precise statement). Once $\cS^*$ has been defined and analyzed, this result will follow quite easily. The work lies in the construction of $\cS^*$, and the proof that $\cS^*$ is a distance magma.

\section{Construction of $\cS^*$}\label{sec:S*} 

\textbf{Throughout all of Section \ref{sec:S*}, we fix a distance magma $\cR=(R,\p,\leq,0)$, and work with a fixed subset $S\seq R$, with $0\in S$.} The goal of this section is to construct $\cS^*$ satisfying Theorem \ref{prethm}. The essential idea is that we are defining a distance magma structure on the space of quantifier-free $2$-types consistent with $\TMS{S}{\cR}$. This statement is made precise by Proposition \ref{prop:2types} and Definition \ref{def:cps}.

\subsection{Construction of $(S^*,\leq^*)$}

The order $(S^*,\sleq)$ will be the smallest complete linear order containing $(S,\leq)$, in which every non-maximal element of $S$ has an immediate successor. In particular, $(S^*,\sleq)$ will depend only on $(S,\leq)$, and not on the ambient distance magma $\cR$.  We will show that $S^*$ is in bijective correspondence with the space of quantifier-free $2$-types consistent with $\TMS{S}{\cR}$ (see Proposition \ref{prop:2types}).

\begin{definition}$~$
\begin{enumerate}
\item A subset $X\seq S$ is a \textbf{cut in $S$} if it is closed upward and, if $S$ has a maximal element $s$, then $s\in X$.
\item Define $S^*$ to be the set of cuts in $S$. Define $\sleq$ on $S^*$ such that, given $X,Y\in S^*$, $X\sleq Y$ if and only if $Y\seq X$.
\end{enumerate}
\end{definition}

The linear order $(S^*,\sleq)$ is a slight variation on the \emph{Dedekind-MacNeille completion} of $(S,\leq)$ (see \cite[Section 11]{MacN}). For instance, we allow $\emptyset$ to be a cut in $S$ in the case that $S$ has no maximal element. Moreover, the standard Dedekind-MacNeille completion only involves cuts which either have no infimum in $S$ or contain their infimum. This motivates the following terminology. 

\begin{definition}
Suppose $X$ is a cut in $S$. Then $X$ is a \textbf{gap cut} if it has no infimum in $S$; $X$ is a \textbf{successor cut} if $\inf X$ exists in $S$ and $\inf X\not\in X$; and $X$ is a \textbf{principal cut} if $\inf X$ exists in $S$ and $\inf X\in X$.
\end{definition}

Note that $X\seq S$ is a principal cut if and only if $X=S^{\geq r}$ for some $r\in S$, and $X$ is a successor cut if and only if $X=S^{>r}$ for some non-maximal $r\in S$ with no immediate successor in $S$. In particular, we identify $S$ with the set of principal cuts (as a subset of $S^*$) via the injective map $r\mapsto S^{\geq r}$. We will also replace the elements of $S^*\backslash S$ with new symbols suggestive of their behavior.

\begin{notation}\label{S*expl}
For the rest of the paper, we use the following description of $(S^*,\sleq)$. We identify $S^*$ with 
$$
S\cup \{r^+:r\in S,~S^{>r}\text{ is a successor cut}\}\cup \{g_X:X\seq S\text{ is a gap cut}\},
$$
where $r^+$ and $g_X$ are distinct new symbols not in $S$. Then $(S^*,\sleq)$ is completely determined by $(S,\leq)$ and the following rules (see Figure \ref{Sfig}):
\begin{enumerate}
\item If $S^{>r}$ is a successor cut then $r\sle r^+\sle s$ for all $s\in S^{>r}$.
\item If $X\seq S$ is a gap cut then $r\sle g_X \sle s$ for all $r\in S\backslash X$ and $s\in X$.
\end{enumerate}
\begin{figure}[h]
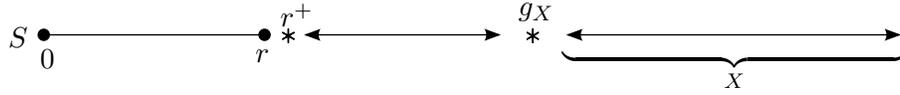

\begin{center}
\begin{lpic}[t(0mm),b(0mm),l(2mm),draft,clean]{line(.65)}
\lbl[t]{-4,3;$S$}
\lbl[t]{2,-1.5;$0$}
\lbl[t]{46,-1.5;$r$}
\lbl[t]{53,8;$r^+$}
\lbl[t]{102,8;$g_X$}
\lbl[t]{142.5,-1.5;$\underbrace{\makebox[1.8in]{}}_X$}
\end{lpic}
\end{center}
\caption{New elements of $S^*$}
\label{Sfig}
\end{figure}
Note that $S^*$ has a maximal element, which occurs in one of two ways:
\begin{enumerate}[$(i)$]
\item If $S$ has a maximal element $s$, then this is also the maximal element of $S^*$.
\item If $S$ has no maximal element then $\emptyset$ is a gap cut in $S$, and so $g_\emptyset$ is the maximal element of $S^*$. 
\end{enumerate}
We will use $\omega_S$ to denote the maximal element of $S^*$. We can distinguish between the two cases above by declaring either $\omega_S\in S$ or $\omega_S\not\in S$.
\end{notation}

\begin{proposition}
$(S^*,\sleq)$ is a complete linear order.
\end{proposition}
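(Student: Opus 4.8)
The plan is to verify first that $(S^*,\sleq)$ is a total order and then to establish completeness by exhibiting suprema and infima as set-theoretic intersections and unions of cuts, being careful throughout that $\sleq$ is \emph{reverse} inclusion.

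Linearity comes essentially for free from the definition. Reflexivity, antisymmetry, and transitivity are immediate since $\sleq$ is defined by inclusion, so the only substantive point is totality. Given cuts $X,Y$, I would show $X\seq Y$ or $Y\seq X$ using that $(S,\leq)$ is totally ordered (the \emph{totality} axiom). If neither containment held, pick $a\in X\setminus Y$ and $b\in Y\setminus X$; comparability of $a,b$ under $\leq$ together with the upward-closure of $X$ and $Y$ forces $b\in X$ or $a\in Y$, contradicting the choice of $a,b$. Hence the cuts are linearly ordered by $\seq$, and therefore by $\sleq$.

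For completeness, the key observation is that cuts are closed under arbitrary intersections and under nonempty unions, and that — because $\sleq$ reverses inclusion — the intersection computes the supremum while the union computes the infimum. Concretely, for a nonempty family $\{X_i\}_{i\in I}$ of cuts, a cut $Z$ is an $\sleq$-upper bound exactly when $Z\seq X_i$ for all $i$, i.e. $Z\seq\bigcap_i X_i$; so provided $\bigcap_i X_i$ is itself a cut, it is the least upper bound. Dually, $Z$ is a lower bound iff $X_i\seq Z$ for all $i$, i.e. $\bigcup_i X_i\seq Z$, so $\bigcup_i X_i$ (when a cut) is the greatest lower bound. It then remains to check that these sets are cuts: an arbitrary intersection of upward-closed sets is upward-closed, and a nonempty union of upward-closed sets is upward-closed; and if $S$ has a maximal element $s$, then $s$ lies in every $X_i$, hence in $\bigcap_i X_i$ and (using nonemptiness) in $\bigcup_i X_i$. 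Thus both are cuts, so $\sup$ and $\inf$ exist for every nonempty subset. Finally, every subset of $S^*$ is bounded, since $S^*$ has a least element $S$ (the cut corresponding to $0$) and a greatest element $\omega_S$; in particular the empty set has supremum $0$ and infimum $\omega_S$. Hence all suprema and infima exist, which is completeness.

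The computation is routine; the only point demanding genuine care is bookkeeping the order reversal — making sure that intersection yields the supremum and union the infimum — together with checking that the maximal-element clause in the definition of a cut is preserved under arbitrary intersection and nonempty union.
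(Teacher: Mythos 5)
Your proof is correct. It is worth noting that the paper itself declines to write this argument out: its ``proof'' consists of the single remark that, setting $S_1=S\cup\{r^+:S^{>r}\text{ is a successor cut}\}$, the order $(S^*,\sleq)$ is the Dedekind--MacNeille completion of $(S_1,\sleq)$, with the verification left as a tedious exercise. Your route is the direct one: totality of $\sleq$ from totality of $\leq$ on $S$ together with upward-closure of cuts, and completeness from the observation that the collection of cuts is closed under arbitrary intersections and nonempty unions, with intersection computing the supremum and union the infimum under the reverse-inclusion order. You correctly check the two points where the argument could silently fail: the maximal-element clause in the definition of a cut is preserved under intersection and nonempty union (since the maximal element of $S$, when it exists, lies in every cut), and the empty subset of $S^*$ is handled by the extreme elements, giving $\sup\emptyset=0$ and $\inf\emptyset=\omega_S$ in agreement with the convention the paper adopts immediately after the proposition. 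What your direct argument buys is a self-contained proof plus explicit formulas, $\sup_i X_i=\bigcap_i X_i$ and $\inf_i X_i=\bigcup_i X_i$, which are exactly the computations used implicitly later (e.g.\ in Lemma \ref{lem:explSum}). What the Dedekind--MacNeille viewpoint buys instead is structural: it identifies $(S^*,\sleq)$ as the smallest completion of $S_1$, which is what justifies the paper's earlier description of $S^*$ as the smallest complete linear order containing $(S,\leq)$ in which every non-maximal element of $S$ has an immediate successor. Both are valid; as a written proof, yours is the more informative.
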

\begin{proof}
This is a tedious exercise, which we leave to the reader. Using Notation \ref{S*expl}, if $S_1=S\cup\{r^+:S^{>r}\text{ is a successor cut}\}$, then $(S^*,\sleq)$ is the Dedekind-MacNeille completion of $(S_1,\sleq)$.
\end{proof}

In light of the last result, we may calculate infima and suprema in $S^*$. By convention, when considering $\emptyset$ as a subset of $S^*$, we let $\inf\emptyset=\omega_S$ and $\sup\emptyset= 0$. We will work with the \emph{order topology} on $S^*$ given by sub-basic open intervals of the form $[0,\alpha)$ or $(\alpha,\omega_S]$ for some $\alpha\in S^*$.  

\begin{proposition}\label{dense}$~$
\begin{enumerate}[$(a)$]
\item For all $\alpha,\beta\in S^*$, if $\alpha\sle\beta$ then there is some $t\in S$ such that $\alpha\sleq t\sle\beta$.
\item If $X\seq S^*$ is nonempty and $\inf X\in S$, then $\inf X\in X$.
\end{enumerate}
\end{proposition}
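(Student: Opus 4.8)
The plan is to work directly with the description of $S^*$ as the set of cuts in $S$, ordered by reverse inclusion, rather than with the symbolic presentation of Notation \ref{S*expl}. Throughout I write $A$ for the cut corresponding to $\alpha\in S^*$, so that $\alpha\sleq\beta$ means exactly $B\seq A$, and I recall that each $r\in S$ is identified with the principal cut $S^{\geq r}$.

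For part $(a)$, suppose $\alpha\sle\beta$, which unwinds to $B\subsetneq A$. I would simply choose some $t\in A\setminus B$; note $t\in S$. Since $t\in A$ and $A$ is upward closed, $S^{\geq t}\seq A$, which is precisely $\alpha\sleq t$. For the second inequality, observe that no element of $B$ lies strictly below $t$: if $b\in B$ with $b<t$, upward closure of $B$ would force $t\in B$, contradicting $t\notin B$. Hence $B\seq S^{\geq t}$, i.e. $t\sleq\beta$; and $t\neq\beta$ since $t\in S^{\geq t}\setminus B$, so in fact $t\sle\beta$. This part is pure bookkeeping once the order convention is unwound.

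For part $(b)$, the key observation I would establish is an explicit formula for infima: for nonempty $X\seq S^*$, the cut corresponding to $\inf X$ is $\bigcup_{\xi\in X}B_\xi$. This follows from completeness of $(S^*,\sleq)$ together with the reverse-inclusion convention—the union is upward closed (and contains the maximum of $S$ when one exists, since each $B_\xi$ does), so it is a cut; it is a $\sleq$-lower bound of $X$ because $B_\xi\seq\bigcup_{\zeta\in X}B_\zeta$ for each $\xi$; and any lower bound has cut containing every $B_\xi$, hence containing the union. Granting this, suppose $\gamma=\inf X\in S$, so its cut is $S^{\geq r}$ with $r=\gamma$ and $\bigcup_{\xi\in X}B_\xi=S^{\geq r}$. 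Since $r$ lies in this union, there is $\xi_0\in X$ with $r\in B_{\xi_0}$; upward closure gives $S^{\geq r}\seq B_{\xi_0}$, while $B_{\xi_0}\seq\bigcup_{\xi\in X}B_\xi=S^{\geq r}$ forces equality. Thus $B_{\xi_0}=S^{\geq r}$, i.e. $\xi_0=\gamma$, so $\gamma\in X$.

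I expect the only real content to be the infimum formula; everything else is careful tracking of the reverse-inclusion order. An alternative route for $(b)$, avoiding the explicit union, uses the design feature that every non-maximal element of $S$ has an immediate successor in $S^*$: if $\gamma=\inf X\in S$ were not in $X$, then $\gamma\sle\xi$ for every $\xi\in X$, so the immediate successor of $\gamma$ would be a $\sleq$-lower bound of $X$ lying strictly above $\gamma$, contradicting $\gamma=\inf X$ (the degenerate case $\gamma=\omega_S$ is handled separately, since then $X=\{\gamma\}$). In either approach the one thing to watch is the orientation of $\sleq$ against $\seq$, so that ``infimum'' consistently corresponds to ``union of cuts.''
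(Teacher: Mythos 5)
Your proposal is correct, and it takes a genuinely different route from the paper's. For part $(a)$, the paper argues through the symbolic presentation of Notation \ref{S*expl}: it reduces to $\alpha\not\in S$ and then splits into cases according to whether $\alpha$ is a successor element $r^+$ or a gap element $g_X$, and according to the form of $\beta$. Your argument works directly with cuts under reverse inclusion and is uniform — picking $t\in A\backslash B$ and using upward closure of both cuts eliminates the case analysis entirely, at the mild cost of having to keep the orientation of $\sleq$ versus $\seq$ straight (which you do correctly, including the strictness of $t\sle\beta$ via $t\in S^{\geq t}\backslash B$). For part $(b)$, the paper's proof is a single line invoking exactly your ``alternative route'': every non-maximal $r\in S$ has an immediate successor in $S^*$ (either $r^+$ or an immediate successor in $S$), so if $\inf X\in S\backslash X$ that successor would be a strictly larger lower bound. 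Your primary route, via the explicit formula that the cut of $\inf X$ is $\bigcup_{\xi\in X}B_\xi$, is more self-contained: it does not lean on the immediate-successor design feature, it actually exhibits the infimum rather than presupposing completeness (so your appeal to completeness is not even needed), and the formula is independently useful. The trade-off is that the paper's version is shorter once Notation \ref{S*expl} is in hand, whereas yours stays closer to the raw Dedekind--MacNeille-style construction. One small point worth noting in your $(b)$: the degenerate case in the alternative route is not only $\gamma=\omega_S$ literally, but any maximal element of $S$ — however, by the paper's Notation \ref{S*expl}, a maximal element of $S$ \emph{is} $\omega_S$, so your handling is fine as stated.
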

\begin{proof}
 Part $(a)$. Fix $\alpha,\beta\in S^*$ with $\alpha\sle\beta$. We may clearly assume $\alpha\not\in S$. We consider the case that $\alpha=r^+$ for some successor cut $S^{>r}$. The case when $\alpha=g_X$ for some gap cut $X\seq S$ is similar and left to the reader. If $\beta=s\in S$ or $\beta=s^+$ for some $s\in S$, then $r<s$ and so, by assumption on $r$, there is some $t\in S$ such that $r<t<s$. On the other hand, if $\beta=g_X$ for some gap cut $X\seq S$, then $r\not\in X$ and so there is $t\not\in X$, with $r<t$. In any case, $\alpha\sle t\sle\beta$.
 
Part $(b)$. This follows from the fact that any non-maximal $r\in S$ has an immediate successor in $S^*$, namely either $r^+$ or an immediate successor in $S$.
\end{proof}

Part $(a)$ of the previous result will be used frequently throughout the entirety of the paper. Therefore, for smoother exposition, we will say \textbf{by density of $S$} when using this fact.

Finally, we connect $(S^*,\sleq)$ back to the first-order setting.

\begin{definition}
Given $\alpha\in S^*$, define the set of $\cL_S$-formulas
\[
p_\alpha(x,y)=\{d(x,y)\leq s:s\in S,~\alpha\sleq s\}\cup \{d(x,y)>s:s\in S,~s\sle\alpha\}.
\]
\end{definition}

\begin{proposition}\label{prop:2types}
Let $S^{\textnormal{qf}}_2(\TMS{S}{\cR})$ denote the space of complete quantifier-free $\cL_S$-types $p(x,y)$ in two variables, such that $p(x,y)\cup \TMS{S}{\cR}$ is consistent. Then the map $\alpha\mapsto p_\alpha(x,y)$ is a bijection from $S^*$ to $S^{\textnormal{qf}}_2(\TMS{S}{\cR})$.
\end{proposition}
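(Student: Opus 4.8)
The plan is to verify three things: that the map is well defined (each $p_\alpha$ generates a complete quantifier-free $2$-type consistent with $\TMS{S}{\cR}$), that it is injective, and that it is surjective. The first reduction is to observe, using Remark \ref{rem:cutaxiom} together with axioms (MS1) and (MS2), that modulo $\TMS{S}{\cR}$ every atomic $\cL_S$-formula in the variables $x,y$ is equivalent to $x=y$ or to a formula $d(x,y)\leq s$ with $s\in S$; hence a complete quantifier-free $2$-type consistent with $\TMS{S}{\cR}$ is determined by how it decides the formulas $d(x,y)\leq s$. Since $\sleq$ is a total order on $S^*\supseteq S$, for each $s\in S$ exactly one of $\alpha\sleq s$ or $s\sle\alpha$ holds, so $p_\alpha$ decides each $d(x,y)\leq s$; and it decides $x=y$ via (MS1), according to whether $\alpha=0$ or $0\sle\alpha$. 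Thus $p_\alpha$ generates a unique complete quantifier-free type modulo $\TMS{S}{\cR}$, and it remains only to check consistency.

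For consistency I would use compactness, showing $p_\alpha\cup\TMS{S}{\cR}$ is finitely satisfiable by two-point spaces. Any two-point $\cR$-colored space $(\{a,b\},d)$ with $d(a,b)=\rho\in S$ is an $\cR$-metric space with $\Dist\seq S$, hence satisfies all of $\TMS{S}{\cR}$ by the remark following Definition \ref{def:approx} and by Proposition \ref{convAx}. A finite fragment of $p_\alpha$ imposes finitely many demands $d(a,b)\leq s_i$ (each with $\alpha\sleq s_i$) and $d(a,b)>s_j$ (each with $s_j\sle\alpha$), and I must produce $\rho\in S$ meeting all of them simultaneously. When some upper bound is present I can take $\rho=\min_i s_i$, and the relation $s_j\sle\alpha\sleq s_i$ forces $s_j<\rho$ in $R$; when only lower bounds occur I use that $\max_j s_j\sle\alpha$ makes $\max_j s_j$ non-maximal in $S$, so some $\rho\in S$ lies strictly above it. This is the main obstacle, since cuts $\alpha\in S^*\setminus S$ (successor and gap cuts) have no representing distance in $R$ itself; the point is precisely that density of $S$ in $S^*$ and the absence of a maximal element let genuine distances in $S$ approximate $\alpha$ from both sides within any finite fragment.

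Injectivity is then immediate from density of $S$ (Proposition \ref{dense}$(a)$): if $\alpha\sle\beta$, pick $t\in S$ with $\alpha\sleq t\sle\beta$, so that $d(x,y)\leq t\in p_\alpha$ while $d(x,y)>t\in p_\beta$, and the two types differ on an atomic formula.

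For surjectivity, given $p\in S^{\textnormal{qf}}_2(\TMS{S}{\cR})$ I would set $X=\{s\in S:(d(x,y)\leq s)\in p\}$ and show $X$ is a cut in $S$: it is closed upward by the implication recorded in Remark \ref{rem:cutaxiom} together with completeness of $p$, and it contains the maximal element of $S$ (when one exists) by (MS4). Thus $X$ is the cut associated to a unique $\alpha\in S^*$, characterized by $\{s\in S:\alpha\sleq s\}=X$ under the identification of Notation \ref{S*expl}. Comparing decisions on atomic formulas then gives $p=p_\alpha$: for $s\in S$ we have $(d(x,y)\leq s)\in p$ iff $s\in X$ iff $\alpha\sleq s$ iff $(d(x,y)\leq s)\in p_\alpha$, with negations matching by completeness. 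Since, by the first paragraph, a complete quantifier-free type is determined by exactly these decisions, this completes the argument.
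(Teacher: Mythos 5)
Your proof is correct and has the same skeleton as the paper's (well-definedness, then injectivity via density of $S$, then surjectivity via the cut $X(p)$); the injectivity and surjectivity steps are essentially verbatim the paper's arguments. The one genuine difference is in the consistency step. The paper avoids compactness entirely: for $\alpha\neq 0$ it builds a \emph{single} two-point $\cL_S$-structure $A=\{a,b\}$ in which $d(x,y)\leq s$ is interpreted to hold on $(a,b)$ exactly when $\alpha\sleq s$ --- note this structure is not an $\cR$-colored space with an $S$-valued distance, since $\alpha$ may lie in $S^*\setminus S$ --- and then verifies $A\models\TMS{S}{\cR}$ by hand, the only nontrivial point being (MS3), where the hypothesis ``no $x\in S$ with $t<x\leq r\p s$'' forces $\max\{r,s\}\leq t$, so that in a two-point space the triangle axiom reduces to monotonicity. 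You instead realize each finite fragment of $p_\alpha$ in an honest two-point $\cR$-metric space with distance $\rho\in S$ approximating $\alpha$, invoke Proposition \ref{convAx} to get $\TMS{S}{\cR}$ for free, and conclude by compactness; your handling of the two fragment shapes (take $\rho=\min_i s_i$ when an upper bound is present, noting $s_j\sle\alpha\sleq\rho$ gives $s_j<\rho$; when only lower bounds occur, use that $\max_j s_j\sle\alpha$ rules out maximality of $\max_j s_j$ in $S$) is sound in both cases, including when $S$ has a maximal element. Your route buys you a cleaner verification (no direct axiom-checking beyond what Proposition \ref{convAx} already did) at the cost of compactness; the paper's buys a compactness-free, explicit witness. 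One small caveat: your blanket claim that \emph{any} two-point $\cR$-colored space with $d(a,b)=\rho\in S$ is an $\cR$-metric space fails for $\rho=0$, and your recipe $\rho=\min_i s_i$ produces exactly this when $\alpha=0$ and the fragment contains $d(x,y)\leq 0$. The fix is the one the paper makes explicit: for $\alpha=0$, realize the type in the one-point space with $x=y$ (consistent with (MS1), as your own completeness discussion anticipates). With that case split stated, the argument is complete.
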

\begin{proof}
We first show that the map is well-defined. Fix $\alpha\in S^*$, and let $\Sigma=p_\alpha(x,y)\cup \TMS{S}{\cR}$. If $\Sigma$ is consistent then, by axiom schemes (MS1) and (MS2), $p_\alpha(x,y)$ uniquely determines an element of $S^{\textnormal{qf}}_2(\TMS{S}{\cR})$. So it suffices to show $\Sigma$ is consistent. If $\alpha=0$ then the $\cR$-metric space with one element satisfies $\Sigma$. Assume $\alpha\neq 0$. Define an $\cL_S$-structure $A=\{a,b\}$ such that, given $s\in S$, $d(x,y)\leq s$ is symmetric, reflexive, and holds on $(a,b)$ if and only if $\alpha\sleq s$. Note that if $A\models\TMS{S}{\cR}$ then $A\models\Sigma$. So we show $A\models\TMS{S}{\cR}$. Axioms (MS1), (MS2), and (MS4) are clear. For (MS3), fix $r,s,t\in S$ such that there is no $x\in S$ with $t<x\leq r\p s$. In particular, $\max\{r,s\}\leq t$. Fix $x,y,z\in A$ such that $A\models d(x,y)\leq r\wedge d(y,z)\leq s$. If $x=z$ then $A\models d(x,z)\leq t$. If $x\neq z$ then either $y=x$ or $y=z$. Since $\max\{r,s\}\leq t$, we have $A\models d(x,z)\leq t$ in either case.

For injectivity, fix $\alpha,\beta\in S^*$, with $\alpha\sle\beta$. By density of $S$, there is $s\in S$ such that $\alpha\sleq s\sle\beta$. Then $p_\alpha(x,y)$ contains $d(x,y)\leq s$ and $p_\beta(x,y)$ contains $d(x,y)>s$.

Finally, we show surjectivity. Given $p(x,y)\in S^{\textnormal{qf}}_2(\TMS{S}{\cR})$, let $X(p)$ be the set of $s\in S$ such that $p$ contains $d(x,y)\leq s$. By axiom schemes (MS1) and (MS2), $p$ is completely determined by $X(p)$. So it suffices to fix $p(x,y)\in S^{\textnormal{qf}}_2(\TMS{S}{\cR})$ and show there is some $\alpha\in S^*$ with $X(p)=X(p_\alpha)$. Let $X=X(p)$, and note that $X$ is a cut by Remark \ref{rem:cutaxiom} and axiom (MS4). If $X=S^{\geq r}$ for some $r\in S$ then $X=X(p_r)$; if $X$ is a successor cut of the form $S^{>r}$ then $X=X(p_{r^+})$; and if $X$ is a gap cut then $X=X(p_{g_X})$. 
\end{proof}

\subsection{Construction of $\cps$}

The definition of $\cps$ is motivated by the trivial observation that, given  $r,s\in R$, 
$$
r\p s=\sup\{t\in R:(r,s,t)\text{ is an $\cR$-triangle}\}.
$$
Given $\alpha,\beta \in S^*$, we define $\alpha\cps\beta$ in an analogous way. 

\begin{definition}\label{def:cps}$~$
\begin{enumerate} 
\item Fix $\alpha,\beta\in S^*$.
\begin{enumerate}[$(a)$]
\item Given $\gamma\in S^*$, the triple $(\alpha,\beta,\gamma)$ is a \textbf{logical $S^*$-triangle} if 
$$
\TMS{S}{\cR}\cup p_\alpha(x,y)\cup p_\beta(y,z)\cup p_\gamma(x,z)
$$
is consistent.
\item Define $\Sigma(\alpha,\beta)=\{\gamma\in S^*:\text{$(\alpha,\beta,\gamma)$ is a logical $S^*$-triangle}\}$.
\item Define $\alpha\cps\beta=\sup \Sigma(\alpha,\beta)$.
\end{enumerate}
\item Let $\cS^*$ denote the $\LDS$-structure $(S^*,\cps,\sleq,0)$.
\end{enumerate}
\end{definition}

To prove that $\cS^*$ is a distance magma, we will need a more explicit expression for $\cps$. We start with some basic properties of logical $S^*$-triangles.

\begin{notation}\label{not:Sapprox}
The notions formulated in Definition \ref{def:Sapprox} only require the linear order $(R,\leq,0)$, and thus remain sensible with $\cS^*$ in place of $\cR$. We say \textbf{$S$-approximation} to mean $(S\cup\{\omega_S\},\cS^*)$-approximation, and we let $\Int^*(S)$ denote $\Int(S\cup\{\omega_S\},\cS^*)$.
\end{notation}

\begin{remark}\label{rem:Sapprox}In the next few proofs, we will tacitly use the following observations. Suppose $\Phi$ is an $S$-approximation of $X\seq S^*$ and $\alpha\in X$.
\begin{enumerate}
\item $\Phi^+(\alpha)\in S\cup\{\omega_S\}$, and if $\alpha\neq 0$ then $\Phi^-(\alpha)\in S$.
\item $\Phi(\alpha)\cap S\neq\emptyset$. 

This is clear if $\Phi^+(\alpha)\in S$. Otherwise $\Phi^+(\alpha)=\omega_S\not\in S$, in which case $S$ has no maximal element and so $\Phi(\alpha)$ contains $S^{>\Phi^-(\alpha)}\neq\emptyset$.
\end{enumerate}
\end{remark}

\begin{proposition}\label{prop:approx}$~$
\begin{enumerate}[$(a)$]
\item Given $\alpha,\beta,\gamma\in S^*$, $\gamma\in\Sigma(\alpha,\beta)$ if and only if $N_\Phi(\alpha,\beta,\gamma)\cap\Delta(S,\cR)\neq\emptyset$ for every $S$-approximation $\Phi$ of $\{\alpha,\beta,\gamma\}$.
\item If $\alpha,\beta\in S^*$ then $\max\{\alpha,\beta\}\in\Sigma(\alpha,\beta)$, and so $\max\{\alpha,\beta\}\sleq\alpha\cps\beta$.
\end{enumerate}
\end{proposition}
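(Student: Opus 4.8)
The plan is to prove $(a)$ by unwinding the definition of a logical $S^*$-triangle through compactness, and then to deduce $(b)$ from a single explicit model. Throughout I will use the correspondence, implicit in Proposition \ref{prop:2types}, between finite fragments of $p_\alpha(x,y)\cup p_\beta(y,z)\cup p_\gamma(x,z)$ and $S$-approximations $\Phi$ of $\{\alpha,\beta,\gamma\}$: the finitely many bounds appearing in such a fragment are exactly the constraints cutting out the intervals $\Phi(\alpha),\Phi(\beta),\Phi(\gamma)$, and as the fragment grows these intervals shrink. By compactness, $(\alpha,\beta,\gamma)$ is a logical $S^*$-triangle if and only if, for every $S$-approximation $\Phi$, the set $\TMS{S}{\cR}$ together with the defining bounds of $\Phi$ is satisfiable (any finite subset of the full set lies inside $\TMS{S}{\cR}$ plus the bounds of some $\Phi$, and conversely $\TMS{S}{\cR}$ plus the bounds of $\Phi$ is itself a subset of the full set). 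So $(a)$ reduces to showing, for each fixed $\Phi$, that this augmented set of bounds is satisfiable precisely when $N_\Phi(\alpha,\beta,\gamma)\cap\Delta(S,\cR)\neq\emptyset$.

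The easy direction is to produce a model from a triangle. Given $(r,s,t)\in N_\Phi(\alpha,\beta,\gamma)\cap\Delta(S,\cR)$, I form the $\cR$-metric space $\cA$ on at most three points realizing the distances $r,s,t$ (identifying two points when the corresponding entry is $0$); since $(r,s,t)$ is an $\cR$-triangle this is a genuine $\cR$-metric space, and $\Dist(\cA)\seq S$, so $\Delta(\cA)\seq\Delta(S,\cR)$ and hence $\cA\models\TMS{S}{\cR}$ by Proposition \ref{convAx}. Because $r\in\Phi(\alpha)$, $s\in\Phi(\beta)$, and $t\in\Phi(\gamma)$, this space also satisfies the defining bounds of $\Phi$, so the augmented set is satisfiable.

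For the converse I start from a model $M\models\TMS{S}{\cR}$ with points $a,b,c$ whose quantifier-free $2$-types are $p_\alpha,p_\beta,p_\gamma$, fix $\Phi$, and must locate an $\cR$-triangle with one vertex in each of $\Phi(\alpha)\cap S$, $\Phi(\beta)\cap S$, $\Phi(\gamma)\cap S$ (all nonempty by Remark \ref{rem:Sapprox}). The three required inequalities, say $t\leq r\p s$, $r\leq s\p t$, and $s\leq r\p t$, are each the value-level shadow of an application of axiom scheme (MS3) inside $M$, using the three choices of middle vertex (and (MS2) to symmetrize): for instance, if $\alpha\sleq r$ and $\beta\sleq s$ for some $r,s\in S$, then $M\models d(a,b)\leq r\wedge d(b,c)\leq s$, and feeding these into (MS3) forces $\gamma\sleq t'$ for every $t'\in S$ admitting no $S$-point in $(t',r\p s]$; the other two inequalities are analogous. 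The main obstacle is that these three instances of (MS3) individually favour placing $r,s,t$ on different sides of $\alpha,\beta,\gamma$, so they cannot all be invoked verbatim for one triangle. The work is therefore to choose the three representatives \emph{simultaneously} inside the prescribed intervals: I will first extract the value-level inequalities above and then select representatives by density of $S$ in $S^*$ (Proposition \ref{dense}), exploiting the width of the intervals — as a simple example in $(\R^{\geq0},+)$ shows, the chosen representatives need not be anywhere near $\alpha,\beta,\gamma$ — while the inequalities not governed by the relevant instance of (MS3) are cleared using positivity. The endpoint configurations (principal, successor, and gap cuts, together with the maximal element $\omega_S$) require separate bookkeeping, and this simultaneous selection is where essentially all the difficulty lies.

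Finally, $(b)$ follows from a single model. Assuming without loss of generality $\alpha\sleq\beta$ (the other case is symmetric via (MS2)), I claim $(\alpha,\beta,\beta)$ is a logical $S^*$-triangle, so that $\max\{\alpha,\beta\}=\beta\in\Sigma(\alpha,\beta)$ and hence $\beta\sleq\sup\Sigma(\alpha,\beta)=\alpha\cps\beta$. To see the claim, define an $\cL_S$-structure on $\{a,b,c\}$ in which the pair $\{a,b\}$ realizes $p_\alpha$ and the pairs $\{b,c\},\{a,c\}$ realize $p_\beta$ (with $a=b$ when $\alpha=0$). The induced distances $\alpha,\beta,\beta$ satisfy the ultrametric inequality, since the two largest are equal. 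Hence, for any $r,s,t\in S$ with no $S$-point in $(t,r\p s]$ — a hypothesis that forces $\max\{r,s\}\leq t$ by positivity — whenever two sides of a triple are $\sleq r$ and $\sleq s$ the third is $\sleq\max\{r,s\}\sleq t$, so the relevant instances of (MS3) hold; together with (MS1), (MS2), and (MS4) this gives $\{a,b,c\}\models\TMS{S}{\cR}$. As this structure realizes $p_\alpha(a,b)\cup p_\beta(b,c)\cup p_\beta(a,c)$, the type set is consistent, which proves the claim and hence $(b)$.
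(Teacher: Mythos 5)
Your compactness framing, the reverse direction of $(a)$, and all of part $(b)$ are sound. In fact your part $(b)$ takes a genuinely different route from the paper: the paper deduces $(b)$ from $(a)$ by another interval-manipulation argument, whereas you verify directly that the three-point structure with distance multiset $\{\alpha,\beta,\beta\}$ models $\TMS{S}{\cR}$, using the observation that the side condition in (MS3) forces $\max\{r,s\}\leq t$ by positivity --- the same device the paper uses for its two-point structure in the proof of Proposition \ref{prop:2types}, and a perfectly legitimate shortcut here. (Two small omissions: you should also collapse all three points when $\beta=0$, not only $a=b$ when $\alpha=0$; and (MS1) requires checking that distinct points get nonzero distance, which your collapsing convention handles once $\beta=0$ is included.)

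The forward direction of $(a)$, however, contains a genuine gap: you correctly identify the difficulty --- choosing representatives $r\in\Phi(\alpha)\cap S$, $s\in\Phi(\beta)\cap S$, $t\in\Phi(\gamma)\cap S$ \emph{simultaneously} so that $(r,s,t)\in\Delta(S,\cR)$ --- but you never carry out the selection, explicitly deferring it as ``where essentially all the difficulty lies.'' This selection \emph{is} the mathematical content of the proposition, and it is not routine bookkeeping. The paper's execution: assume $\alpha\sleq\beta\sleq\gamma$ (permissible since the logical triangle condition is symmetric via (MS2)); when $\beta\sle\omega_S$, use density to fix $u\in S$ with $\beta\sleq u$, set $s=\min\{u,\Phi^+(\beta),\Phi^+(\gamma)\}$ and $r=\min\{s,\Phi^+(\alpha)\}$, so that only \emph{one} instance of (MS3) is ever invoked: its contrapositive, applied in a model witnessing the logical triangle with $\Phi^-(\gamma)\sle\gamma$, produces $t\in S$ with $\Phi^-(\gamma)<t\leq r\p s$, and after replacing $t$ by $\min\{t,\Phi^+(\gamma)\}$ (and by $s$ if $t<s$, noting $s\sleq\Phi^+(\gamma)$ was arranged in advance) one has $r\leq s\leq t\leq r\p s$, so the remaining two triangle inequalities come for free from positivity. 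Moreover, the case $\beta=\omega_S\not\in S$ is not absorbed by this scheme at all: there is then \emph{no} $s\in S$ with $\beta\sleq s$, so your plan of ``extracting value-level inequalities'' from (MS3) has no applicable instances, and one must instead directly produce a triangle of the degenerate shape $(r,s,s)$ with $r\in\Phi(\alpha)\cap S$ and $s\in\Phi(\omega_S)\cap S$, $r\leq s$. Since your proposal neither performs the simultaneous choice nor anticipates that this case requires a structurally different triangle, the core of $(a)$ remains unproved.
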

\begin{proof}
Part $(a)$. The reverse direction follows by compactness and Proposition \ref{convAx}. For the forward direction, let $\Phi$ be an $S$-approximation of $\{\alpha,\beta,\gamma\}$. We may assume $\alpha\sleq\beta\sleq\gamma$. 

Suppose first that $\beta=\omega_S$ (which means $\gamma=\omega_S$ as well). Choose $r\in\Phi(\alpha)\cap S$. We may find $s\in\Phi(\omega_S)\cap S$, with $r\leq s$ (if $\omega_S\in  S$ then let $s=\omega_S$; and if $\omega_S\not\in S$ then choose any $s\in\Phi(\omega_S)\cap S$ with $r\leq s$). Then $(r,s,s)\in N_\Phi(\alpha,\beta,\gamma)\cap\Delta(S,\cR)$.

Now suppose $\beta\sle\omega_S$. We claim that there are $r,s\in S$ such that $\alpha\sleq r\sleq\Phi^+(\alpha)$, $\beta\sleq s\sleq\Phi^+(\beta)$, and $r\leq s\sleq\Phi^+(\gamma)$. Indeed, since $\beta\sle\omega_S$, we may use density of $S$ to fix $u\in S$ such that $\beta\sleq u$. Now let $s=\min\{u,\Phi^+(\beta),\Phi^+(\gamma)\}$ and $r=\min\{s,\Phi^+(\alpha)\}$. 

Since $\Phi^-(\gamma)\sle \gamma$, $\alpha\sleq r$, $\beta\sleq s$, and $(\alpha,\beta,\gamma)$ is a logical $S^*$-triangle, it follows from axiom scheme (MS3) that there is some $t\in S$ with $\Phi^-(\gamma)<t\leq r\p s$. After replacing $t$ by $\min\{t,\Phi^+(\gamma)\}$, we may assume $t\in\Phi(\gamma)$. We may also assume $s\leq t$ (if $t<s$ then $s\in \Phi(\gamma)\cap S$, and so we replace $t$ with $s$). Then $(r,s,t)\in N_\Phi(\alpha,\beta,\gamma)\cap\Delta(S,\cR)$. 

Part $(b)$. Fix $\alpha,\beta\in S^*$, with $\alpha\sleq\beta$. We use part $(a)$ to show $\beta\in\Sigma(\alpha,\beta)$. Let $\Phi$ be an $S$-approximation of $\{\alpha,\beta\}$. By density of $S$ we may assume that if $\Phi^+(\beta)\not\in S$ then $\beta=\omega_S\not\in S$. After replacing $\Phi^+(\alpha)$ with $\min\{\Phi^+(\alpha),\Phi^+(\beta)\}$, we may assume $\Phi^+(\alpha)\sleq\Phi^+(\beta)$. Choose $r,s\in S$ as follows. If $\Phi^+(\beta)\in S$ then let $r=\Phi^+(\alpha)$ and $s=\Phi^+(\beta)$. Otherwise, $\beta=\omega_S\not\in S$ and we choose $r\in\Phi(\alpha)\cap S$ arbitrarily and $s\in\Phi(\beta)\cap S$ with $r\leq s$. In either case, $(r,s,s)\in N_\Phi(\alpha,\beta,\gamma)\cap \Delta(S,\cR)$.
\end{proof}




The next result gives explicit expressions for $\cps$.

\begin{lemma}\label{lem:explSum}$~$
\begin{enumerate}[$(a)$]
\item If $\alpha,\beta\in S^*$ then
$$
\alpha\cps\beta=\inf\{\sup\{x\in S:x\leq r\p s\}:r,s\in S,~\alpha\sleq r,~\beta\sleq s\}.
$$
\item If $r,s\in S$ then $r\cps s=\sup\{x\in S:x\leq r\p s\}$.
\item If $\alpha,\beta\in S^*$ then $\alpha\cps \beta=\inf\{r\cps s:r,s\in S,~\alpha\sleq r,~\beta\sleq s\}$.
\item If $r,s,r\p s\in S$ then $r\cps s=r\p s$.
\end{enumerate}
\end{lemma}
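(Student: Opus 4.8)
The plan is to prove part $(a)$ first and then read off $(b)$, $(c)$, and $(d)$ as easy consequences. Throughout, write $f(r,s)=\sup\{x\in S:x\leq r\p s\}$ for $r,s\in S$, and let $M$ denote the right-hand side of $(a)$, so $M=\inf\{f(r,s):r,s\in S,\ \alpha\sleq r,\ \beta\sleq s\}$. Since $\alpha\cps\beta=\sup\Sigma(\alpha,\beta)$ by definition, I would establish $(a)$ via the two inequalities $\alpha\cps\beta\sleq M$ and $M\sleq\alpha\cps\beta$, using Proposition \ref{prop:approx}$(a)$ as the main device for translating between logical $S^*$-triangles and genuine $\cR$-triangles sitting inside $S$-approximation boxes.

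For $\alpha\cps\beta\sleq M$, since $M$ is an infimum it suffices to prove $\alpha\cps\beta\sleq f(r,s)$ for each admissible pair, and since $\alpha\cps\beta=\sup\Sigma(\alpha,\beta)$ it is enough to show every $\gamma\in\Sigma(\alpha,\beta)$ satisfies $\gamma\sleq f(r,s)$. Arguing contrapositively, I assume $f(r,s)\sle\gamma$ and build an $S$-approximation $\Phi$ of $\{\alpha,\beta,\gamma\}$ witnessing $\gamma\notin\Sigma(\alpha,\beta)$: take $\Phi^+(\alpha)=r$ and $\Phi^+(\beta)=s$, and by density of $S$ pick $w\in S$ with $f(r,s)\sleq w\sle\gamma$, setting $\Phi(\gamma)=(w,\omega_S]$. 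Then every $t'\in\Phi(\gamma)\cap S$ has $f(r,s)\sle t'$, hence $r\p s< t'$ by definition of $f(r,s)$ as a supremum, while every $r'\in\Phi(\alpha)\cap S$ and $s'\in\Phi(\beta)\cap S$ satisfy $r'\p s'\leq r\p s$ by the order axiom; so no $(r',s',t')\in N_\Phi(\alpha,\beta,\gamma)$ is an $\cR$-triangle, and Proposition \ref{prop:approx}$(a)$ yields $\gamma\notin\Sigma(\alpha,\beta)$.

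The harder inequality $M\sleq\alpha\cps\beta$ I would get by proving the sharper fact that $M\in\Sigma(\alpha,\beta)$ itself, so that $M\sleq\sup\Sigma(\alpha,\beta)=\alpha\cps\beta$; proving merely that all $\gamma\sle M$ lie in $\Sigma(\alpha,\beta)$ would not suffice, since $M$ may have an immediate predecessor in $S^*$. First observe $\max\{\alpha,\beta\}\sleq M$, because $\max\{r,s\}\sleq f(r,s)$ for each admissible pair. Now fix an arbitrary $S$-approximation $\Phi$ of $\{\alpha,\beta,M\}$; I must produce an $\cR$-triangle in $N_\Phi(\alpha,\beta,M)$. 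The crux is to choose $r\in\Phi(\alpha)\cap S$ and $s\in\Phi(\beta)\cap S$ with $\alpha\sleq r$, $\beta\sleq s$, and $\max\{r,s\}\sleq\Phi^+(M)$ — possible by density precisely because $\max\{\alpha,\beta\}\sleq M\sleq\Phi^+(M)$. For this pair $\Phi^-(M)\sle M\sleq f(r,s)$, so there is $t'\in S$ with $t'\leq r\p s$ and $\Phi^-(M)\sle t'$; replacing $t'$ by $\max\{r,s,t'\}$, and then by $\min\{t',\Phi^+(M)\}$ in case $\Phi^+(M)\in S$, one checks $t'\in\Phi(M)$ and that $(r,s,t')$ is an $\cR$-triangle (the two nontrivial triangle inequalities follow from $\max\{r,s\}\leq t'$ and positivity). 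This is the main obstacle: it demands the same careful bookkeeping of approximation endpoints as the proof of Proposition \ref{prop:approx}$(a)$, and the degenerate cases where $\alpha$, $\beta$, or $\Phi^+(M)$ equals $\omega_S$ (equivalently $\omega_S\notin S$) must be handled separately, following the template of that proof.

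Finally, parts $(b)$–$(d)$ drop out of $(a)$. For $(b)$, specialize $\alpha=r$ and $\beta=s$ in $S$: every admissible pair $(r',s')$ has $r\p s\leq r'\p s'$ by the order axiom, so $f(r,s)\sleq f(r',s')$ and the infimum in $(a)$ is attained at $(r,s)$, giving $r\cps s=f(r,s)=\sup\{x\in S:x\leq r\p s\}$. Part $(c)$ is then just $(a)$ with each $f(r,s)$ rewritten as $r\cps s$ via $(b)$. Part $(d)$ follows from $(b)$: when $r\p s\in S$, it is the maximum of $\{x\in S:x\leq r\p s\}$, so the supremum equals $r\p s$.
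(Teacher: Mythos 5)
Your proof is correct and takes essentially the same approach as the paper's: both prove $(a)$ by bounding $\alpha\cps\beta$ above using Proposition \ref{prop:approx}$(a)$ applied to a suitable approximation with $\Phi^+(\alpha)=r$, $\Phi^+(\beta)=s$, and below by showing the infimum itself lies in $\Sigma(\alpha,\beta)$, then read off $(b)$--$(d)$ as immediate consequences. Your only deviations are cosmetic --- you apply Proposition \ref{prop:approx}$(a)$ contrapositively where the paper derives a contradiction, and your $\max$/$\min$ replacement of $t'$ substitutes for the paper's reduction to $\beta\sle\gamma$ and its case split on $\Phi^+(\gamma)$ versus $\sup X(r,s)$ --- and the degenerate cases you flag are indeed routine, since if $\max\{\alpha,\beta\}=\omega_S\notin S$ then $M=\omega_S=\max\{\alpha,\beta\}\in\Sigma(\alpha,\beta)$ directly by Proposition \ref{prop:approx}$(b)$.
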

\begin{proof}
Part $(a)$. For $r,s\in S$, let $X(r,s)=\{x\in S:x\leq r\p s\}$. Fix $\alpha,\beta\in S^*$, and let $\gamma=\inf\{\sup X(r,s):r,s\in S,~\alpha\sleq r,~\beta\sleq s\}$. We first show $\alpha\cps\beta\sleq\gamma$. Suppose not. Then there are $r,s\in S$, with $\alpha\sleq r$, $\beta\sleq s$, and $\sup X(r,s)\sle\alpha\cps\beta$. By density of $S$, we may fix $t\in S$ with $\sup X(r,s)\sleq t\sle\alpha\cps\beta$, and then, by definition of $\alpha\cps\beta$, fix $\delta\in\Sigma(\alpha,\beta)$ with $t\sle\delta$. Let $\Phi$ be an $S$-approximation of $\{\alpha,\beta,\delta\}$ such that $\Phi^+(\alpha)=r$, $\Phi^+(\beta)=s$, and $\Phi^-(\delta)=t$. Since $\delta\in\Sigma(\alpha,\beta)$, we may use Proposition \ref{prop:approx}$(a)$ to fix  $(u,v,w)\in N_\Phi(\alpha,\beta,\gamma)\cap\Delta(S,\cR)$. Then $w\leq u\p v\leq r\p s$, and so $w\sleq\sup X(r,s)\sleq t=\Phi^-(\delta)$, which is a contradiction. 

To finish the proof, we show $\gamma\in\Sigma(\alpha,\beta)$. By Proposition \ref{prop:approx}$(b)$, we may assume $\beta\sle \gamma$. Without loss of generality, we also assume $\alpha\sleq\beta$.  Let $\Phi$ be an $S$-approximation of $\{\alpha,\beta,\gamma\}$. We want to show $N_\Phi(\alpha,\beta,\gamma)\cap \Delta(S,\cR)\neq\emptyset$. We claim that there are $r,s\in S$ such that $\alpha\sleq r\sleq\Phi^+(\alpha)$, $\beta\sleq s\sleq\Phi^+(\beta)$ and $r\leq s\sle\gamma$. Indeed, since $\beta\sle\gamma$ we may use density of $S$ to fix $u\in S$ with $\beta\sleq u\sle\gamma$, and we then let $s=\min\{u,\Phi^+(\beta)\}$ and $r=\min\{s,\Phi^+(\alpha)\}$.

If $\Phi^+(\gamma)\sle\sup X(r,s)$ then $(r,s,\Phi^+(\gamma))\in N_\Phi(\alpha,\beta,\gamma)\cap\Delta(S,\cR)$. So suppose $\sup X(r,s)\sleq\Phi^+(\gamma)$. We have $\max\{s,\Phi^-(\gamma)\}\sle\gamma\sleq\sup X(r,s)$, and so there is some $t\in X(r,s)$ such that $\max\{s,\Phi^-(\gamma)\}<t$. Then $t\sleq\sup X(r,s)\sleq\Phi^+(\gamma)$ and so, altogether, $(r,s,t)\in N_\Phi(\alpha,\beta,\gamma)\cap\Delta(S,\cR)$. 

Part $(b)$ follows easily from part $(a)$. Part $(c)$ combines parts $(a)$ and $(b)$. Part $(d)$ is immediate from part $(b)$.  
\end{proof}

We can now prove the main goal.

\begin{theorem}\label{thm:S*DM}
Suppose $\cR=(R,\p,\leq,0)$ is  a distance magma and fix $S\seq R$, with $0\in S$. Then $\cS^*=(S^*,\cps,\sleq,0)$ is a distance magma.
\end{theorem}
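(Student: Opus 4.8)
The plan is to verify directly the five defining axioms of a distance magma for $\cS^*$, leaning heavily on the explicit description of $\cps$ already established. Two of the axioms are immediate. \emph{Totality} is free, since we have already shown that $(S^*,\sleq)$ is a complete linear order. \emph{Positivity} is exactly the content of Proposition \ref{prop:approx}$(b)$: for all $\alpha,\beta\in S^*$ we have $\max\{\alpha,\beta\}\sleq\alpha\cps\beta$, so in particular $\alpha\sleq\alpha\cps\beta$. The remaining three axioms I would deduce mechanically from the formula
$$
\alpha\cps\beta=\inf\{r\cps s:r,s\in S,~\alpha\sleq r,~\beta\sleq s\}
$$
of Lemma \ref{lem:explSum}$(c)$, together with $r\cps s=\sup\{x\in S:x\leq r\p s\}$ from Lemma \ref{lem:explSum}$(b)$.

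For \emph{commutativity}, the index set $\{(r,s)\in S^2:\alpha\sleq r,~\beta\sleq s\}$ is carried to the one for $\beta\cps\alpha$ by swapping coordinates, and for $r,s\in S$ the commutativity of $\p$ in $\cR$ gives $r\p s=s\p r$, hence $r\cps s=s\cps r$; so the two infima agree and $\alpha\cps\beta=\beta\cps\alpha$. For the \emph{order} axiom, suppose $\alpha\sleq\gamma$ and $\beta\sleq\delta$. Then the index set defining $\gamma\cps\delta$ is a subset of the one defining $\alpha\cps\beta$ (the constraints $\gamma\sleq r$, $\delta\sleq s$ are stronger), and an infimum over a smaller set is at least the infimum over the larger set, giving $\alpha\cps\beta\sleq\gamma\cps\delta$. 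No separate argument is needed.

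For \emph{unity}, I would compute $\alpha\cps 0$ using the same formula: since $0\sleq s$ holds for every $s\in S$, the constraint $0\sleq s$ is vacuous, and taking $s=0$ yields the terms $r\cps 0=\sup\{x\in S:x\leq r\p 0\}=\sup\{x\in S:x\leq r\}=r$ (using $r\p 0=r$ in $\cR$ and $r\in S$) for each $r\in S$ with $\alpha\sleq r$. Thus the index set contains every $r\in S$ with $\alpha\sleq r$, so $\alpha\cps 0\sleq\inf\{r\in S:\alpha\sleq r\}=\alpha$, the last equality coming from density of $S$ in $S^*$ (Proposition \ref{dense}) after checking the principal, successor, and gap cut cases; conversely positivity forces $r\cps s\sgeq r\sgeq\alpha$ for every term, so $\alpha\cps 0\sgeq\alpha$. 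Hence $\alpha\cps 0=\alpha$, and $0\cps\alpha=\alpha$ follows by commutativity. The genuine difficulty has already been absorbed into Lemma \ref{lem:explSum} and Proposition \ref{prop:approx}; granting those, the only step requiring real care is the \emph{unity} computation, specifically pinning $\inf\{r\in S:\alpha\sleq r\}=\alpha$ exactly via density together with positivity.
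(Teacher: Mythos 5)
Your proof is correct and takes essentially the same route as the paper: a direct verification of the distance magma axioms, with positivity coming from Proposition \ref{prop:approx}$(b)$ and the order axiom (and commutativity) read off from the explicit formula in Lemma \ref{lem:explSum}. The only divergence is the unity axiom, which the paper derives from Propositions \ref{prop:2types} and \ref{prop:approx}$(b)$ (a logical triangle $(\alpha,0,\gamma)$ forces $\gamma=\alpha$), whereas you compute $\alpha\cps 0=\alpha$ from Lemma \ref{lem:explSum}$(c)$ together with the density fact $\inf\{r\in S:\alpha\sleq r\}=\alpha$; both arguments are valid, and yours correctly handles the degenerate case $\alpha=\omega_S\notin S$ via the convention $\inf\emptyset=\omega_S$.
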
 
\begin{proof}
By construction, $(S^*,\sleq,0)$ is a linear order with least element 0, and $\cps$ is clearly commutative. By Propositions \ref{prop:2types} and \ref{prop:approx}$(b)$, $0$ is the identity element of $\cS^*$. Finally, fix $\alpha,\beta,\gamma,\delta\in S^*$, with $\alpha\sleq\gamma$ and $\beta\sleq\delta$. By the explicit expression for $\cps$ in Lemma \ref{lem:explSum}$(c)$, we have $\alpha\cps\beta\sleq\gamma\cps\delta$.
\end{proof}

The following consequence of the previous work will be useful later on.

\begin{proposition}\label{prop:expl}
Suppose $\cR=(R,\p,\leq,0)$ is a distance magma and $S\seq R$, with $0\in S$. Then $\cS^*$-triangles coincide with logical $S^*$-triangles.
\end{proposition}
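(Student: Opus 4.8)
The plan is to reduce to the ordered case and then read off the result from the work already done on $\cps$. First I would note that both notions are symmetric under permutations of $(\alpha,\beta,\gamma)$. For $\cS^*$-triangles this is immediate from the commutativity of $\cps$ together with the symmetric shape of the three defining inequalities in Definition \ref{def:approx}$(1)$. For logical $S^*$-triangles it follows from the symmetry axiom (MS2): permuting the three points witnessing consistency of $\TMS{S}{\cR}\cup p_\alpha(x,y)\cup p_\beta(y,z)\cup p_\gamma(x,z)$ permutes their pairwise distances, and the type of a pair depends only on the symmetric distance between its points, so consistency is unchanged. Hence I may assume $\alpha\sleq\beta\sleq\gamma$.

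Under this assumption I would collapse the $\cS^*$-triangle condition to a single inequality. Since $\cS^*$ is a distance magma (Theorem \ref{thm:S*DM}), positivity gives $\alpha\sleq\gamma\sleq\beta\cps\gamma$ and $\beta\sleq\gamma\sleq\alpha\cps\gamma$, so the inequalities $\alpha\sleq\beta\cps\gamma$ and $\beta\sleq\alpha\cps\gamma$ hold automatically; thus $(\alpha,\beta,\gamma)$ is an $\cS^*$-triangle if and only if $\gamma\sleq\alpha\cps\beta$. On the other hand, $(\alpha,\beta,\gamma)$ is a logical $S^*$-triangle if and only if $\gamma\in\Sigma(\alpha,\beta)$, by definition. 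So the whole proposition reduces, for $\alpha\sleq\beta\sleq\gamma$, to the equivalence $\gamma\sleq\alpha\cps\beta\iff\gamma\in\Sigma(\alpha,\beta)$.

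The implication $\gamma\in\Sigma(\alpha,\beta)\Rightarrow\gamma\sleq\alpha\cps\beta$ is trivial, since $\alpha\cps\beta=\sup\Sigma(\alpha,\beta)$; this direction is what yields ``logical $\Rightarrow$ $\cS^*$-triangle.'' The substantive direction is the converse: assuming $\beta\sleq\gamma\sleq\alpha\cps\beta$, show $\gamma\in\Sigma(\alpha,\beta)$. If $\gamma=\beta$, then $\gamma=\max\{\alpha,\beta\}\in\Sigma(\alpha,\beta)$ by Proposition \ref{prop:approx}$(b)$ (which also covers the degenerate case $\gamma=0$). If $\beta\sle\gamma$, I would verify the approximation criterion of Proposition \ref{prop:approx}$(a)$: for an arbitrary $S$-approximation $\Phi$ of $\{\alpha,\beta,\gamma\}$, produce $(r,s,t)\in N_\Phi(\alpha,\beta,\gamma)\cap\Delta(S,\cR)$. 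Concretely, using density of $S$ choose $r,s\in S$ with $\alpha\sleq r\sleq\Phi^+(\alpha)$, $\beta\sleq s\sleq\Phi^+(\beta)$ and $r\leq s\sle\gamma$, and then select $t\in\Phi(\gamma)\cap S$ with $s<t\leq r\p s$ (taking $t=\Phi^+(\gamma)$ or a witness in $\{x\in S:x\leq r\p s\}$, according to which of $\Phi^+(\gamma)$ and $\sup\{x\in S:x\leq r\p s\}$ is larger). The resulting triple satisfies $r\leq s\leq t\leq r\p s$, whence the remaining two inequalities defining an $\cR$-triangle follow from positivity in $\cR$.

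This is precisely the construction carried out in the second half of the proof of Lemma \ref{lem:explSum}$(a)$, and I expect the only real point to get right is that the hypothesis $\gamma=\alpha\cps\beta$ used there can be weakened to $\gamma\sleq\alpha\cps\beta$. Indeed, that construction only ever uses the inequality $\gamma\sleq\sup\{x\in S:x\leq r\p s\}$, which here still holds: by Lemma \ref{lem:explSum}$(a)$ and the choice $\alpha\sleq r$, $\beta\sleq s$, we have $\gamma\sleq\alpha\cps\beta\sleq\sup\{x\in S:x\leq r\p s\}$. Together with $\beta\sleq s\sle\gamma$ and $\Phi^-(\gamma)\sle\gamma$, this guarantees the required $t$. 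Thus the main obstacle is bookkeeping rather than a new idea: transcribing the earlier argument with $\sleq$ in place of equality, and checking the boundary case $\gamma=\beta$ separately.
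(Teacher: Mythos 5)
Your proof is correct and takes essentially the same approach as the paper's: after the identical reduction (WLOG $\alpha\sleq\beta\sleq\gamma$, so that only $\gamma\sleq\alpha\cps\beta$ is at issue, the other two inequalities and the easy direction being automatic), both arguments verify the criterion of Proposition \ref{prop:approx}$(a)$ by producing a witness in $N_\Phi(\alpha,\beta,\gamma)\cap\Delta(S,\cR)$ via density of $S$ and truncation at $\Phi^+(\gamma)$. The only difference is routing: the paper picks $\delta\in\Sigma(\alpha,\beta)$ with $\gamma\sle\delta$ (handling $\gamma=\alpha\cps\beta$ by the proof of Lemma \ref{lem:explSum}$(a)$) and applies Proposition \ref{prop:approx}$(a)$ to $\delta$ before truncating, whereas you re-run the second half of that lemma's proof directly, correctly observing that it only uses $\gamma\sleq\sup\{x\in S:x\leq r\p s\}$ rather than equality.
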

\begin{proof}
Any logical $S^*$-triangle is an $\cS^*$-triangle by definition of $\cps$. Conversely, suppose $(\alpha,\beta,\gamma)$ is an $\cS^*$-triangle. Without loss of generality, we assume $\alpha\sleq\beta\sleq\gamma$. By the proof of Lemma \ref{lem:explSum}$(a)$, we may also assume $\gamma\sle\alpha\cps\beta$, and so there is $\delta\in\Sigma(\alpha,\beta)$ with $\gamma\sle\delta$.  We use Proposition \ref{prop:approx}$(a)$ to show $(\alpha,\beta,\gamma)$ is a logical $S^*$-triangle. Fix an $S$-approximation $\Phi$ of $\{\alpha,\beta,\gamma\}$. We want to show $N_\Phi(\alpha,\beta,\gamma)\cap\Delta(S,\cR)\neq\emptyset$. Without loss of generality, we may replace $\Phi^+(\alpha)$ by $\min\{\Phi^+(\alpha),\Phi^+(\gamma)\}$, and $\Phi^+(\beta)$ by $\min\{\Phi^+(\beta),\Phi^+(\gamma)\}$, and thus assume $\Phi^+(\alpha)\sleq\Phi^+(\gamma)$ and $\Phi^+(\beta)\sleq\Phi^+(\gamma)$. 

Since $\delta\in\Sigma(\alpha,\beta)$ and $\Phi^-(\gamma)\sle\gamma\sle\delta$, we may use Proposition \ref{prop:approx}$(a)$ to find $(r,s,u)\in\Delta(S,\cR)$ such that $r\in\Phi(\alpha)$, $s\in\Phi(\beta)$, and $\Phi^-(\gamma)<u$. Note that $\max\{r,s\}\leq\Phi^+(\gamma)$. Let $t=\min\{\Phi^+(\gamma),u\}$. Then $(r,s,t)\in N_\Phi(\alpha,\beta,\gamma)\cap\Delta(S,\cR)$. 
\end{proof}


We will eventually be interested in \emph{associative} distance magmas (i.e. distance monoids). When that time comes, it will be helpful to know that in order to check associativity of $\cps$, it suffices to just consider elements of $S$.

\begin{proposition}\label{associative*}
Suppose $\cR$ is a distance magma and $S\seq R$, with $0\in S$. If $r\cps(s\cps t)=(r\cps s)\cps t$ for all $r,s,t\in S$, then $\cps$ is associative on $S^*$.
\end{proposition}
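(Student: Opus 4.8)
The plan is to prove the single inequality $\alpha\cps(\beta\cps\gamma)\sleq(\alpha\cps\beta)\cps\gamma$ for all $\alpha,\beta,\gamma\in S^*$. Since $\cps$ is commutative (Theorem \ref{thm:S*DM}), applying this inequality to the relabeled triple $(\gamma,\beta,\alpha)$ yields $\gamma\cps(\beta\cps\alpha)\sleq(\gamma\cps\beta)\cps\alpha$, which by commutativity is exactly $(\alpha\cps\beta)\cps\gamma\sleq\alpha\cps(\beta\cps\gamma)$; the two inequalities together give associativity. To prove the inequality I would expand the right-hand side with Lemma \ref{lem:explSum}$(c)$, namely $(\alpha\cps\beta)\cps\gamma=\inf\{c\cps e:c,e\in S,\ \alpha\cps\beta\sleq c,\ \gamma\sleq e\}$, so that it suffices to fix $c,e\in S$ with $\alpha\cps\beta\sleq c$ and $\gamma\sleq e$ and show $\alpha\cps(\beta\cps\gamma)\sleq c\cps e$.

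The generic case is $\alpha\cps\beta\sle c$. Here Lemma \ref{lem:explSum}$(c)$ for $\alpha\cps\beta$, together with the strict inequality, lets me choose $r,s\in S$ with $\alpha\sleq r$, $\beta\sleq s$, and $r\cps s\sleq c$. Monotonicity of $\cps$ (again Theorem \ref{thm:S*DM}) gives $\beta\cps\gamma\sleq s\cps e$ and hence $\alpha\cps(\beta\cps\gamma)\sleq r\cps(s\cps e)$; the hypothesis applied to $r,s,e\in S$ rewrites $r\cps(s\cps e)=(r\cps s)\cps e$, and monotonicity once more gives $(r\cps s)\cps e\sleq c\cps e$. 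This settles the inequality for every admissible $c$ with $\alpha\cps\beta\sle c$, and in particular disposes of the whole statement whenever $\alpha\cps\beta\notin S$, since then every admissible $c$ is strictly above $\alpha\cps\beta$.

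The remaining case, $c=\alpha\cps\beta\in S$, is the step I expect to be the main obstacle. The trouble is that when $\alpha\cps\beta\in S$ is not attained as some $r\cps s$ (which forces $\alpha\notin S$ or $\beta\notin S$), the approximants $r\cps s\sgt\alpha\cps\beta$ only let me reach $\inf_{c\sgt\alpha\cps\beta}c\cps e$, and the map $x\mapsto x\cps e$ may genuinely jump upward at $\alpha\cps\beta$ — this is precisely the discontinuity phenomenon governing Theorem \ref{prethm2} — so no limiting argument from above can close the gap. To handle this case I would drop the infimum formula and work with logical $S^*$-triangles. By Proposition \ref{prop:expl} and Definition \ref{def:cps}, $\alpha\cps(\beta\cps\gamma)=\sup\Sigma(\alpha,\beta\cps\gamma)$, so it suffices to show that every $\delta\in\Sigma(\alpha,\beta\cps\gamma)$ satisfies $\delta\sleq(\alpha\cps\beta)\cps\gamma$. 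Assuming for contradiction that $\delta\sgt(\alpha\cps\beta)\cps\gamma=\sup\Sigma(\alpha\cps\beta,\gamma)$, the triple $(\alpha\cps\beta,\gamma,\delta)$ is not a logical $S^*$-triangle, so Proposition \ref{prop:approx}$(a)$ yields an $S$-approximation $\Psi$ of $\{\alpha\cps\beta,\gamma,\delta\}$ with $N_\Psi\cap\Delta(S,\cR)=\emptyset$.

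The crux is then to convert $\Psi$ into an $S$-approximation $\Phi$ of $\{\alpha,\beta\cps\gamma,\delta\}$ with $N_\Phi\cap\Delta(S,\cR)=\emptyset$, which by Proposition \ref{prop:approx}$(a)$ witnesses $\delta\notin\Sigma(\alpha,\beta\cps\gamma)$ and gives the contradiction. Concretely, I would match $\Phi$ to $\Psi$ on the $\gamma$- and $\delta$-coordinates, choose the $\alpha$- and $(\beta\cps\gamma)$-intervals using that $\alpha\cps\beta$ is an actual element of $S$, and then show that any $\cR$-triangle $(r,w,v)$ found in $N_\Phi(\alpha,\beta\cps\gamma,\delta)$ would — after splitting $w\approx\beta\cps\gamma$ into $S$-distances near $\beta$ and $\gamma$ and regrouping the first two coordinates into a distance near $\alpha\cps\beta$ — produce an $\cR$-triangle lying in $N_\Psi(\alpha\cps\beta,\gamma,\delta)$, contradicting the choice of $\Psi$. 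It is exactly this regrouping step that invokes the hypothesis that $\cps$ is associative on $S$, ensuring the intermediate sum lands at the correct value; this is the combinatorial heart of the argument and is essentially the edge-amalgamation (``$4$-values'') content alluded to in the remark following Definition \ref{def:DM}. Everything outside this boundary case is routine given Lemma \ref{lem:explSum} and Theorem \ref{thm:S*DM}.
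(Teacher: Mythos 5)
Your generic case is fine and is, up to mirror symmetry, exactly the paper's argument: reduce to one inequality by commutativity, expand the outer sum by Lemma \ref{lem:explSum}$(c)$, pick inner approximants in $S$, and regroup using the hypothesis. But the boundary case $c=\alpha\cps\beta\in S$, which you correctly flag as the crux and then only sketch, is a genuine gap --- and the difficulty you describe there is illusory. The scenario ``$\alpha\cps\beta\in S$ is not attained as some $r\cps s$'' cannot occur: by Lemma \ref{lem:explSum}$(c)$, $\alpha\cps\beta=\inf\{r\cps s:r,s\in S,~\alpha\sleq r,~\beta\sleq s\}$, and Proposition \ref{dense}$(b)$ says precisely that a nonempty subset of $S^*$ whose infimum lies in $S$ contains its infimum. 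So when $c=\alpha\cps\beta\in S$ you may choose $r,s\in S$ with $\alpha\sleq r$, $\beta\sleq s$, and $r\cps s=c$, and the same three-line chain $\alpha\cps(\beta\cps\gamma)\sleq r\cps(s\cps e)=(r\cps s)\cps e\sleq c\cps e$ closes the case. The upward jump of $x\mapsto x\cps e$ that worries you is a real phenomenon in general (it is exactly what Theorem \ref{prethm2} is about), but it is irrelevant here because you never need to approach $\alpha\cps\beta$ strictly from above.

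The detour through logical $S^*$-triangles is therefore unnecessary, and as written it is also incomplete: the ``regrouping'' step --- converting an $S$-approximation $\Psi$ of $\{\alpha\cps\beta,\gamma,\delta\}$ with $N_\Psi(\alpha\cps\beta,\gamma,\delta)\cap\Delta(S,\cR)=\emptyset$ into one for $\{\alpha,\beta\cps\gamma,\delta\}$ by splitting a distance $w$ near $\beta\cps\gamma$ and reassembling a distance near $\alpha\cps\beta$ --- is where all the work would lie, and nothing in the sketch shows the reassembled triple is again an $\cR$-triangle landing inside $N_\Psi$. Note in particular that the hypothesis gives associativity of $\cps$ on $S$, not of $\p$ on $R$, so manipulations like passing from $v\leq r\p(s\p t)$ to $v\leq(r\p s)\p t$ are not available. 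The paper's proof (of the mirror inequality $(\alpha\cps\beta)\cps\gamma\sleq\alpha\cps(\beta\cps\gamma)$) is exactly your first paragraph plus the Proposition \ref{dense}$(b)$ attainment observation, applied there to the inner sum $\beta\cps\gamma$ in the case where it equals the approximant $u\in S$.
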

\begin{proof}
To show $\cps$ is associative on $S^*$ it suffices, by commutativity of $\cps$, to fix $\alpha,\beta,\gamma\in S^*$ and show $(\alpha\cps\beta)\cps\gamma\sleq\alpha\cps(\beta\cps\gamma)$. To show this inequality it suffices by Lemma \ref{lem:explSum}$(c)$ to fix $r,u\in S$, with $\alpha\sleq r$ and $\beta\cps\gamma\sleq u$, and show $(\alpha\cps\beta)\cps\gamma\sleq r\cps u$. So fix such $r,u\in S$.

We claim that, since $\beta\cps\gamma\sleq u$, there are $s,t\in S$ such that $\beta\sleq s$, $\gamma\sleq t$ and $s\cps t\sleq u$. Indeed, if $\beta\cps\gamma\sle u$ then this is immediate from Lemma \ref{lem:explSum}$(c)$; and if $\beta\cps\gamma=u$ then, as $u\in S$, we obtain the desired $s,t\in S$ by Lemma \ref{lem:explSum}$(c)$ combined with Proposition \ref{dense}$(b)$. Now we have
\[
(\alpha\cps\beta)\cps\gamma\sleq (r\cps s)\cps t=r\cps(s\cps t)\sleq r\cps u.\qedhere
\]
\end{proof}

\begin{remark}\label{rem:TMS}
Given a distance magma $\cR$ and a subset $S\seq R$, we can now treat $S$ as a subset of the distance magma $\cS^*$, and define the $\cL_S$-theory $\TMS{S}{\cS^*}$ using Definition \ref{def:TMS}. So it is worth observing that $\TMS{S}{\cS^*}=\TMS{S}{\cR}$. In particular, (MS1), (MS2), and (MS4) are clearly the same in each case. Given $r,s,t\in S$, it follows from Lemma \ref{lem:explSum} that there is an $x\in S$ with $t<x\leq r\p s$ if and only if there is an $x\in S$ such that $t<x\sleq r\cps s$. So (MS3) is also the same in each case.
\end{remark}

\section{First-Order Theories of Metric Spaces}\label{sec:FOL}

In this section, we collect the previous results and prove Theorem \ref{prethm}. We first show that $\TMS{S}{\cR}$ can be thought of as a collection of axioms for the class of $\cS^*$-metric spaces (as a subclass of $\cS^*$-colored spaces).

\begin{definition} Suppose $\cR$ is a distance magma and $S\seq R$, with $0\in S$. Let $A$ be an arbitrary $\cL_S$-structure. Then $A$ is \textbf{$\cS^*$-colorable} if, for all $a,b\in A$, there is a (unique) $\alpha=\alpha(a,b)\in S^*$ such that $A\models p_\alpha(a,b)$. In this case, we define $d_A:A\times A\func S^*$ such that $d_A(a,b)=\alpha(a,b)$.
\end{definition}

\begin{proposition}\label{axModels}
Suppose $\cR$ is a distance magma and $S\seq R$, with $0\in S$. 
\begin{enumerate}[$(a)$]
\item Let $A$ be an $\cL_S$-structure. If $A\models\TMS{S}{\cR}$ then $A$ is $\cS^*$-colorable. 
\item Let $\cA=(A,d_A)$ be an $\cS^*$-colored space. Then $\cA\models\TMS{S}{\cR}$ if and only if $\cA$ is an $\cS^*$-metric space. 
\end{enumerate}
\end{proposition}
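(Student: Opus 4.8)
The plan is to treat part (a) as a direct consequence of the type/cut correspondence already established, and to split part (b) into its two implications, handling the translation of each axiom scheme (MS1)--(MS4) separately.

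For part (a), I would fix $a,b\in A$ and consider the set $p(x,y)$ of all atomic and negated atomic $\cL_S$-formulas satisfied by the pair $(a,b)$ in $A$. Since $A\models\TMS{S}{\cR}$ and $(a,b)$ is realized in $A$, the set $p(x,y)$ is a complete quantifier-free $2$-type with $p(x,y)\cup\TMS{S}{\cR}$ consistent, so $p(x,y)\in S^{\textnormal{qf}}_2(\TMS{S}{\cR})$. By the bijection of Proposition \ref{prop:2types}, $p(x,y)=p_\alpha(x,y)$ for a unique $\alpha\in S^*$; thus $A\models p_\alpha(a,b)$ and $A$ is $\cS^*$-colorable, with $\alpha(a,b)=\alpha$ well-defined by the injectivity half of that proposition.

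For the forward implication of part (b), assume $\cA\models\TMS{S}{\cR}$, and recall that, since $\cA$ is $\cS^*$-colored, $\cA\models d(a,b)\leq s$ iff $d_A(a,b)\sleq s$ for $s\in S$. Axiom $(i)$ of an $\cS^*$-metric space is the translation of (MS1), since $d_A(a,b)=0$ iff $d_A(a,b)\sleq 0$ iff $a=b$, and axiom $(ii)$ follows from (MS2) because two elements of $S^*$ determining the same cut of $S$ coincide. For the triangle inequality, write $\alpha=d_A(a,b)$, $\beta=d_A(b,c)$, $\gamma=d_A(a,c)$; the simultaneous realization of these types in $\cA$ shows $\TMS{S}{\cR}\cup p_\alpha(x,y)\cup p_\beta(y,z)\cup p_\gamma(x,z)$ is consistent, so $(\alpha,\beta,\gamma)$ is a logical $S^*$-triangle and $\gamma\in\Sigma(\alpha,\beta)$; hence $\gamma\sleq\sup\Sigma(\alpha,\beta)=\alpha\cps\beta$, as required.

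For the reverse implication, assume $\cA$ is an $\cS^*$-metric space (legitimate since $\cS^*$ is a distance magma by Theorem \ref{thm:S*DM}) and verify each scheme directly. Schemes (MS1), (MS2), and (MS4) fall out immediately from axioms $(i)$, $(ii)$, and the fact that the maximal element $\omega_S$ of $S^*$ equals $\max S$ when the latter exists. The crux is (MS3): given $r,s,t\in S$ with no $x\in S$ satisfying $t<x\leq r\p s$, and $a,b,c$ with $d_A(a,b)\sleq r$ and $d_A(b,c)\sleq s$, the triangle inequality together with the order axiom of $\cS^*$ gives $d_A(a,c)\sleq d_A(a,b)\cps d_A(b,c)\sleq r\cps s$; by Lemma \ref{lem:explSum}$(b)$, $r\cps s=\sup\{x\in S:x\leq r\p s\}$, and the hypothesis forces every such $x$ to satisfy $x\leq t$, whence $r\cps s\sleq t$ and $d_A(a,c)\sleq t$, i.e.\ $\cA\models d(a,c)\leq t$. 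Alternatively, one could deduce this implication from Proposition \ref{convAx} applied to $\cS^*$ after checking $\Delta(\cA)\subsetsim\Delta(S,\cS^*)$ (using Proposition \ref{prop:expl} and Proposition \ref{prop:approx}$(a)$) and invoking $\TMS{S}{\cS^*}=\TMS{S}{\cR}$ from Remark \ref{rem:TMS}. I expect the reverse direction's treatment of (MS3)---tying the combinatorial hypothesis on $r,s,t$ to the supremum formula for $\cps$---to be the main point requiring care.
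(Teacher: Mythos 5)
Your proof is correct. Part $(a)$ and the forward direction of part $(b)$ are exactly the paper's argument (the paper proves $(a)$ by citing Proposition \ref{prop:2types}, and handles the forward direction of $(b)$ by (MS1), (MS2), and the definition of $\cps$, which is precisely your logical-triangle observation). Where you genuinely diverge is the reverse direction of $(b)$. The paper does not verify the axiom schemes directly: it sets $S_\omega=S\cup\{\omega_S\}$, shows $\Delta(\cA)\subsetsim\Delta(S_\omega,\cS^*)$ by combining Propositions \ref{prop:approx}$(a)$ and \ref{prop:expl} with the identity $\Delta(S,\cS^*)=\Delta(S,\cR)$ from Lemma \ref{lem:explSum}$(b)$, invokes Proposition \ref{convAx} to get $\cA\models\TMS{S_\omega}{\cS^*}$, and then descends via $\TMS{S}{\cS^*}\seq\TMS{S_\omega}{\cS^*}$ and Remark \ref{rem:TMS}. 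Your direct verification of (MS3) --- $d_A(a,c)\sleq d_A(a,b)\cps d_A(b,c)\sleq r\cps s=\sup\{x\in S:x\leq r\p s\}\sleq t$, where the last inequality uses the hypothesis that no $x\in S$ satisfies $t<x\leq r\p s$ --- is valid and more elementary: it needs only the magma structure of $\cS^*$ (Theorem \ref{thm:S*DM}) and Lemma \ref{lem:explSum}$(b)$, bypassing the approximation machinery and the $\omega_S$ bookkeeping entirely. What the paper's route buys is reuse of lemmas already in hand and, in passing, the slightly stronger fact $\cA\models\TMS{S_\omega}{\cS^*}$; what yours buys is transparency, since it makes explicit the exact point you identified, namely that the combinatorial hypothesis on $(r,s,t)$ in (MS3) is nothing but the statement $r\cps s\sleq t$. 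One caution: the ``alternative'' you sketch at the end is essentially the paper's proof, but as you state it --- with $\Delta(S,\cS^*)$ in place of $\Delta(S_\omega,\cS^*)$ --- it would break when $S$ has no maximal element and some distance in $\cA$ equals $\omega_S\not\in S$, since then Definition \ref{def:approx}$(4i)$ fails ($\omega_S$ lies outside the convex closure of $S$ in $S^*$, so no $(S,\cS^*)$-approximation of such a triangle exists); this is precisely why the paper passes to $S_\omega$. Your main argument does not suffer from this, as it never approximates distances at all.
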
 
\begin{proof}
Part $(a)$. By Proposition \ref{prop:2types}. 

Part $(b)$. If $\cA\models\TMS{S}{\cR}$ then $\cA$ is an $\cS^*$-metric space by axioms schemes (MS1) and (MS2), and the definition of $\cps$. Conversely, suppose $\cA$ is an $\cS^*$-metric space. Let $S_\omega=S\cup\{\omega_S\}$. Note that $\Delta(S,\cS^*)\seq\Delta(S_\omega,\cS^*)$ and, by Lemma \ref{lem:explSum}$(b)$, $\Delta(S,\cS^*)=\Delta(S,\cR)$. Combined with Propositions \ref{prop:approx}$(a)$ and \ref{prop:expl}, we conclude $\Delta(\cA)\subsetsim\Delta(S_\omega,\cS^*)$, and so $\cA\models \TMS{S_\omega}{\cS^*}$ by Proposition \ref{convAx}. It is easy to see that $\TMS{S}{\cS^*}\seq \TMS{S_\omega}{\cS^*}$ and so, by Remark \ref{rem:TMS}, $\cA\models\TMS{S}{\cR}$.
\end{proof}

We can now state and prove an updated version of Theorem \ref{prethm}.

\begin{theorem}\label{thm:models}
Let $\cR$ be a distance magma and fix $S\seq R$, with $0\in S$. There is an $\LDS$-structure $\cS^*=(S^*,\cps,\sleq,0)$ satisfying the following properties.
\begin{enumerate}[$(a)$]
\item $\cS^*$ is a distance magma.
\item $(S^*,\sleq)$ is an extension of $(S,\leq)$, and $S$ is dense in $S^*$ (with respect to the order topology).
\item For all $r,s\in S$, if $r\p s\in S$ then $r\cps s=r\p s$.
\item Suppose $\cA$ is an $\cR$-metric space, with $\Delta(\cA)\subsetsim\Delta(S,\cR)$. If $M\models \Th_{\cL_S}(\cA)$, then $M$ is $\cS^*$-colorable and $(M,d_M)$ is an $\cS^*$-metric space.
\end{enumerate}
\end{theorem}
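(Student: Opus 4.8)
The plan is to recognize that every part of this theorem has already been proved in the preceding sections, so the argument reduces to assembling those results and checking that they fit together cleanly. Part $(a)$ is precisely Theorem \ref{thm:S*DM}, which asserts that $\cS^*=(S^*,\cps,\sleq,0)$ is a distance magma. For part $(b)$, I would point to the construction of $(S^*,\sleq)$: under the identification $r\mapsto S^{\geq r}$ of $S$ with the principal cuts, $(S^*,\sleq)$ extends $(S,\leq)$, and topological density of $S$ in $S^*$ is exactly Proposition \ref{dense}$(a)$ (the fact the paper abbreviates as ``by density of $S$''). Part $(c)$ is a direct restatement of Lemma \ref{lem:explSum}$(d)$, which gives $r\cps s=r\p s$ whenever $r,s,r\p s\in S$.

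The only part with genuine content to organize is $(d)$, and here the plan is to run a short chain of implications. First I would invoke the hypothesis $\Delta(\cA)\subsetsim\Delta(S,\cR)$ together with Proposition \ref{convAx} to conclude $\cA\models\TMS{S}{\cR}$. Since $\TMS{S}{\cR}$ is a set of $\cL_S$-sentences satisfied by $\cA$, it is contained in the complete theory $\Th_{\cL_S}(\cA)$; consequently any $M\models\Th_{\cL_S}(\cA)$ also satisfies $M\models\TMS{S}{\cR}$. Applying Proposition \ref{axModels}$(a)$ shows that $M$ is $\cS^*$-colorable, which produces an $\cS^*$-colored space $(M,d_M)$. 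Finally, since $(M,d_M)$ is an $\cS^*$-colored space satisfying $\TMS{S}{\cR}$, Proposition \ref{axModels}$(b)$ yields that $(M,d_M)$ is an $\cS^*$-metric space, completing part $(d)$.

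Because all the ingredients are already in hand, there is no real obstacle remaining at this point; the difficulty was entirely front-loaded into the construction of $\cS^*$ and the proof that it is a distance magma (Theorem \ref{thm:S*DM}), together with the characterization in Proposition \ref{axModels} showing that $\TMS{S}{\cR}$ axiomatizes the class of $\cS^*$-metric spaces. The one step I would take care to state explicitly is the transfer from $\cA$ to its models, which relies only on the elementary observation that $\TMS{S}{\cR}\subseteq\Th_{\cL_S}(\cA)$ once $\cA\models\TMS{S}{\cR}$ is known; the coherence of applying the $\cS^*$-based notions of colorability and of the metric-space characterization is underwritten by the identity $\TMS{S}{\cS^*}=\TMS{S}{\cR}$ recorded in Remark \ref{rem:TMS}.
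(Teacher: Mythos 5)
Your proposal is correct and matches the paper's proof essentially verbatim: parts $(a)$, $(b)$, and $(c)$ are cited from Theorem \ref{thm:S*DM}, Proposition \ref{dense}$(a)$, and Lemma \ref{lem:explSum}$(d)$, and part $(d)$ is obtained exactly as in the paper, via Proposition \ref{convAx} to get $\TMS{S}{\cR}\seq\Th_{\cL_S}(\cA)$ and then Proposition \ref{axModels} applied to any model $M$. Your explicit note that the transfer to models rests on $\TMS{S}{\cR}$ being part of the complete theory is a correct unpacking of the same step the paper states in one line.
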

\begin{proof}
Parts $(a)$, $(b)$, and $(c)$ follow from Theorem \ref{thm:S*DM}, Proposition \ref{dense}$(a)$, and Lemma \ref{lem:explSum}$(d)$, respectively. For part $(d)$, we have $\TMS{S}{\cR}\seq\Th_{\cL}(\cA)$ by Proposition \ref{convAx}, and so the statements follow from Proposition \ref{axModels}.
\end{proof}

Much of the previous work relied on approximating $\cS^*$-triangles with triangles in $\Delta(S,\cR)$. We now extend this notion of approximation to larger $\cS^*$-colored spaces.

\begin{definition}
Let $\cR$ be a distance magma and fix $S\seq R$, with $0\in S$. An $\cS^*$-colored space $(A,d_A)$ is \textbf{approximately $(S,\cR)$-metric} if, for all finite $A_0\seq A$ and all $S$-approximations $\Phi$ of $\Dist(A_0,d_A)$, there is an $\cR$-metric $d_\Phi$ on $A_0$ such that $d_\Phi(a,b)\in\Phi(d_A(a,b))\cap S$ for all $a,b\in A_0$.
\end{definition} 

\begin{proposition}\label{approximable}
Let $\cR$ be a distance magma and fix $S\seq R$, with $0\in S$. Suppose $\cA=(A,d_A)$ is an $\cS^*$-colored space. If $\cA$ is approximately $(S,\cR)$-metric then $\cA$ is an $\cS^*$-metric space.
\end{proposition}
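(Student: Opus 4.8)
The plan is to verify directly the three defining conditions of an $\cS^*$-metric space from Definition \ref{def:MS}: that $d_A(x,y)=0$ iff $x=y$, symmetry, and the triangle inequality $d_A(x,z)\sleq d_A(x,y)\cps d_A(y,z)$. In each case I would apply the hypothesis that $\cA$ is approximately $(S,\cR)$-metric to a finite subset $A_0$ of size at most three, feed it a carefully chosen $S$-approximation $\Phi$, and read off the conclusion from the genuine $\cR$-metric $d_\Phi$ it produces. The recurring mechanism is that $d_\Phi$ takes values in $S$, that $(S^*,\sleq)$ extends $(S,\leq)$, and that $d_\Phi$ satisfies the honest $\cR$-metric axioms (symmetry and the triangle inequality included), so any constraint forced on the $S$-valued approximant transfers back to the $S^*$-valued distances given by $d_A$.

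For the first condition, suppose $d_A(x,x)=\alpha\neq 0$ and take $A_0=\{x\}$. Choosing the $S$-approximation with $\Phi(\alpha)=(0,\omega_S]$ — a legitimate element of $\Int^*(S)$ that does not contain $0$ — there can be no $\cR$-metric $d_\Phi$ on $A_0$ with $d_\Phi(x,x)\in\Phi(\alpha)\cap S$, since necessarily $d_\Phi(x,x)=0$; this contradicts the approximation hypothesis, so $d_A(x,x)=0$. Conversely, if $d_A(x,y)=0$ with $x\neq y$, take $A_0=\{x,y\}$; every $S$-approximation has $\Phi(0)=\{0\}$, so $d_\Phi(x,y)\in\{0\}$, contradicting $d_\Phi(x,y)\neq 0$. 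For symmetry, suppose $\alpha=d_A(x,y)$ and $\beta=d_A(y,x)$ satisfy $\alpha\sle\beta$ (both nonzero, by the first condition). By density of $S$ pick $t\in S$ with $\alpha\sleq t\sle\beta$, take $A_0=\{x,y\}$, and approximate with $\Phi(\alpha)=(0,t]$ and $\Phi(\beta)=(t,\omega_S]$. Then $d_\Phi(x,y)\leq t\sle d_\Phi(y,x)$, contradicting the symmetry of the $\cR$-metric $d_\Phi$; hence $\alpha=\beta$.

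The substantive step is the triangle inequality. Fix $x,y,z$ and set $\alpha=d_A(x,y)$, $\beta=d_A(y,z)$, $\gamma=d_A(x,z)$. Since $\alpha\cps\beta=\sup\Sigma(\alpha,\beta)$ by Definition \ref{def:cps}, it suffices to prove $\gamma\in\Sigma(\alpha,\beta)$, and by Proposition \ref{prop:approx}$(a)$ this reduces to showing $N_\Phi(\alpha,\beta,\gamma)\cap\Delta(S,\cR)\neq\emptyset$ for every $S$-approximation $\Phi$ of $\{\alpha,\beta,\gamma\}$. Given such a $\Phi$, I would extend it to an $S$-approximation of the full finite distance set $\Dist(\{x,y,z\},d_A)$ by keeping its values on $\alpha,\beta,\gamma$, assigning $\{0\}$ to $0$, and assigning $(0,\omega_S]$ to every other distance, then apply the approximate-metric hypothesis to $A_0=\{x,y,z\}$ to obtain an $\cR$-metric $d_\Phi$ with $d_\Phi(a,b)\in\Phi(d_A(a,b))\cap S$. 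The triple $(d_\Phi(x,y),d_\Phi(y,z),d_\Phi(x,z))$ is then an $\cR$-triangle (Definition \ref{def:approx}) with all entries in $S$, hence an element of $\Delta(S,\cR)$, and by the constraint on $d_\Phi$ it lies in $\Phi(\alpha)\times\Phi(\beta)\times\Phi(\gamma)=N_\Phi(\alpha,\beta,\gamma)$. This furnishes the required nonempty intersection, so $\gamma\sleq\alpha\cps\beta$.

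I expect the main work to be conceptual rather than computational: the key is recognizing that Proposition \ref{prop:approx}$(a)$ recasts the triangle inequality as exactly the existence of approximating genuine $\cR$-triangles, which is precisely what ``approximately $(S,\cR)$-metric'' supplies for each finite configuration. The remaining points require only routine care: checking that each $\Phi$ built for the first two conditions is a legitimate member of $\Int^*(S)$ separating the relevant values (this is where density of $S$ and the availability of $\omega_S$ are used), and confirming that extending a given approximation of $\{\alpha,\beta,\gamma\}$ to all of $\Dist(\{x,y,z\},d_A)$ leaves its values on $\alpha,\beta,\gamma$ undisturbed. In particular, no case analysis on coincidences among $\alpha,\beta,\gamma$ is needed, since the argument only reads off $d_\Phi$ on the three fixed pairs.
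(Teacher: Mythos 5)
Your proof is correct, but it follows a genuinely different route from the paper's, which handles the proposition in two lines: by compactness and Proposition \ref{convAx}, an approximately $(S,\cR)$-metric space is an $\cL_S$-substructure of some model of $\TMS{S}{\cR}$, and any such model is an $\cS^*$-metric space by Proposition \ref{axModels}$(b)$, so the metric axioms are never checked by hand. You instead verify the axioms of Definition \ref{def:MS} directly: separating intervals in $\Int^*(S)$ (via density of $S$, Proposition \ref{dense}$(a)$) dispose of the zero and symmetry axioms, and the triangle inequality reduces to the reverse direction of Proposition \ref{prop:approx}$(a)$, whose hypothesis is precisely what the approximate-metric condition supplies for a three-point configuration, since the restriction of an honest $\cR$-metric to a triangle lies in $\Delta(S,\cR)$ and, with the zero and symmetry axioms already established, $\Dist(\{x,y,z\},d_A)=\{0\}\cup\{\alpha,\beta,\gamma\}$, so extending a given $S$-approximation of $\{\alpha,\beta,\gamma\}$ is harmless. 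Note that you have not actually eliminated compactness: the direction of Proposition \ref{prop:approx}$(a)$ you invoke is itself proved in the paper ``by compactness and Proposition \ref{convAx},'' so both arguments rest on the same two ingredients; yours localizes the compactness to three-point configurations, while the paper applies it once, globally, to the diagram of $\cA$. The trade-off is that the paper's proof is shorter and yields slightly more (an embedding of $\cA$ into a model of $\TMS{S}{\cR}$), while yours is self-contained at the level of the metric axioms and makes transparent that $\gamma\sleq\alpha\cps\beta$ is, through Proposition \ref{prop:approx}$(a)$, exactly the statement that every approximation of the triple meets $\Delta(S,\cR)$. The only points deserving an explicit word in a final write-up are trivial: in the symmetry step assume $x\neq y$ (so that $\alpha,\beta$ are nonzero by the first axiom, as your parenthetical presupposes), and observe that $t\sle\beta\sleq\omega_S$ guarantees $(t,\omega_S]$ is a legitimate member of $\Int^*(S)$; both are already implicit in your construction.
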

\begin{proof}
Suppose $\cA$ is approximately $(S,\cR)$-metric. By compactness and Proposition \ref{convAx}, $\cA$ is an $\cL_S$-substructure of some model of $\TMS{S}{\cR}$, which is an $\cS^*$-metric space by Proposition \ref{axModels}$(b)$. So $\cA$ is an $\cS^*$-metric space.
\end{proof}

Regarding the converse of this fact, we have shown that $\cS^*$-metric spaces, with at most three points, are approximately $(S,\cR)$-metric (combine Propositions \ref{prop:approx}$(a)$ and \ref{prop:expl}). For larger $\cS^*$-metric spaces, this can fail.

\begin{example}
Let $\cR=(\R^{\geq0},+,\leq,0)$ and $S=[0,2)\cup[3,\infty)$. By Lemma \ref{lem:explSum}, we have $1+^*_S 3=4$ and $1+^*_S 1=3$. Define the $\cS^*$-metric space $\cA$, where $A=\{w,x,y,z\}$, $d_A(w,x)=d_A(x,z)=d_A(w,y)=1$, $d_A(x,y)=d_A(w,z)=3$, and $d_A(y,z)=4$.  
Then the $S$-approximation of $\Dist(\cA)$, given by $\Phi(1)=(0,1]$, $\Phi(3)=(0,3]$, and $\Phi(4)=(3,4]$, witnesses that $\cA$ is not approximately $(S,\cR)$-metric.
\end{example} 

In the next section, we will isolate a natural assumption on $S$ under which the converse of Proposition \ref{approximable} holds.

\section{Completeness Properties for Distance Sets}\label{sec:closure}

Until this point, we have made no assumptions on the set of distances $S$ in a distance magma $\cR$, other than $0\in S$. In this section, we formulate certain properties of distance sets which allow for suitable analogs of ``addition of distances" and ``absolute value of the difference between distances".

\subsection{Sum-completeness}

\begin{definition}
Let $\cR$ be a distance magma. A subset $S\seq R$ is \textbf{sum-complete in $\cR$} if $0\in S$ and, for all $r,s\in S$, the set $\{x\in S:x\leq r\p s\}$ contains a maximal element. In this case, we define $r\ps s=\max\{x\in S:x\leq r\p s\}$ and we let $\cS$ denote the $\LDS$-structure $(S,\ps,\leq,0)$.
\end{definition}

We omit the clause ``in $\cR$" when the ambient distance magma is clear from context. Note that the consideration of sum-complete subsets of $\cR$ generalizes Example \ref{introEx}$(3)$. The canonical examples of sum-complete subsets of $R$ are sets which contain $0$ and are closed under $\p$. For example, $R$ itself is always sum-complete in $\cR$. Any finite subset of $R$ containing $0$ is sum-complete. If the ordering on $\cR$ is complete then any closed subset of $R$ containing $0$ is sum-complete. The main property of sum-complete sets is that they admit a distance magma structure.

\begin{proposition}\label{coincides}
Given a distance magma $\cR$, $S\seq R$ is sum-complete if and only if $0\in S$ and, for all $r,s\in S$, $r\cps s\in S$ and $r\cps s\leq r\p s$. In this case, $r\cps s=r\ps s$ for all $r,s\in S$, and $\cS$ is a distance magma.
\end{proposition}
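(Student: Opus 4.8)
The plan is to reduce everything to the explicit formula for $\cps$ on $S$ provided by Lemma \ref{lem:explSum}$(b)$, namely $r\cps s=\sup\{x\in S:x\leq r\p s\}$, where the supremum is computed in the complete order $(S^*,\sleq)$. Writing $X(r,s)=\{x\in S:x\leq r\p s\}$, the entire statement becomes a matter of comparing $\sup X(r,s)$ (taken in $S^*$) with $\max X(r,s)$ (taken in $S$, when it exists), and then transporting the distance magma axioms from $\cS^*$ to $\cS$ along the inclusion $S\seq S^*$.

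For the biconditional I would first handle the forward direction. Assume $S$ is sum-complete, so $X(r,s)$ has a maximal element $m=r\ps s$ for each $r,s\in S$. Since $m\in X(r,s)$ and $m$ is an upper bound of $X(r,s)$, it is the least upper bound, so $\sup X(r,s)=m$; by Lemma \ref{lem:explSum}$(b)$ this gives $r\cps s=m\in S$, and $m\leq r\p s$ because $m\in X(r,s)$. Thus both conditions hold and $r\cps s=r\ps s$. For the converse, assume $r\cps s\in S$ and $r\cps s\leq r\p s$ for all $r,s\in S$. Then $r\cps s=\sup X(r,s)$ is an upper bound of $X(r,s)$ that itself lies in $X(r,s)$, being an element of $S$ which is $\leq r\p s$; hence $r\cps s=\max X(r,s)$, so $X(r,s)$ has a maximal element, $S$ is sum-complete, and $r\ps s=r\cps s$.

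Finally, for the claim that $\cS$ is a distance magma, I would argue by substructure inheritance. Under either equivalent hypothesis we now know $r\cps s\in S$ for all $r,s\in S$, so $S$ (which contains $0$) is closed under $\cps$; since $r\ps s=r\cps s$ and, by Theorem \ref{thm:models}$(b)$, $\sleq$ restricted to $S$ is $\leq$, the structure $\cS=(S,\ps,\leq,0)$ is precisely the $\LDS$-substructure of $\cS^*$ carried by $S$. The distance magma axioms --- totality, positivity, order, commutativity, unity --- are all universal sentences of $\LDS$, and hence pass from $\cS^*$ (a distance magma by Theorem \ref{thm:S*DM}) to its substructure $\cS$; totality of $\leq$ on $S$ is immediate as a suborder of a total order. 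I expect the only potentially delicate point to be the identification of $\sup X(r,s)$ in $S^*$ with $\max X(r,s)$ in $S$, but this is entirely formal, since a maximum is always a least upper bound and, conversely, a least upper bound lying in its own set is a maximum. Thus no real obstacle remains once Lemma \ref{lem:explSum}$(b)$ and Theorem \ref{thm:S*DM} are in hand.
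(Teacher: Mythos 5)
Your proof is correct and follows essentially the same route as the paper, whose entire proof is the remark that the proposition ``follows easily from Lemma \ref{lem:explSum}$(b)$'': you have simply supplied the routine details, namely the formal identification of $\sup\{x\in S:x\leq r\p s\}$ (in $S^*$) with its maximum when one lies in the set, plus inheritance of the universal distance magma axioms from $\cS^*$ (consistent with Remark \ref{simplify}, which views $\cS$ as an $\LDS$-substructure of $\cS^*$). No gaps.
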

\begin{proof}
This follows easily from Lemma \ref{lem:explSum}$(b)$.
\end{proof}

We also note the following corollary of Proposition \ref{associative*}, which will be helpful when we eventually focus on distance \emph{monoids}.

\begin{corollary}\label{assoc*cor}
Suppose $\cR$ is a distance magma and $S\seq R$ is sum-complete. Then $\cS^*$ is a distance monoid if and only if $\cS$ is a distance monoid.
\end{corollary}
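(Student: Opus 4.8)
The plan is to reduce everything to the single fact, supplied by Proposition \ref{coincides}, that since $S$ is sum-complete the operations $\ps$ and $\cps$ agree on $S$ and return values in $S$. Recall from Definition \ref{def:DM} that a distance magma is a distance monoid precisely when its operation is associative. Both $\cS$ (by Proposition \ref{coincides}) and $\cS^*$ (by Theorem \ref{thm:S*DM}) are already known to be distance magmas, so the statement amounts to showing that $\cps$ is associative on $S^*$ if and only if $\ps$ is associative on $S$.

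For the direction that $\cS$ being a monoid implies $\cS^*$ is a monoid, I would fix $r,s,t\in S$ and use Proposition \ref{coincides} to rewrite, noting that $r\ps s$ and $s\ps t$ again lie in $S$: the products $(r\ps s)\ps t$ and $r\ps(s\ps t)$ become $(r\cps s)\cps t$ and $r\cps(s\cps t)$ respectively. Associativity of $\ps$ then yields $(r\cps s)\cps t = r\cps(s\cps t)$ for all $r,s,t\in S$, which is exactly the hypothesis of Proposition \ref{associative*}; applying that proposition gives associativity of $\cps$ on all of $S^*$.

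The converse direction runs the same chain of equalities backwards. Assuming $\cps$ is associative on $S^*$, I would restrict to $r,s,t\in S$. Since $r\cps s=r\ps s\in S$ and $s\cps t=s\ps t\in S$, the instance $(r\cps s)\cps t = r\cps(s\cps t)$ of associativity becomes $(r\ps s)\ps t = r\ps(s\ps t)$ after applying Proposition \ref{coincides} at each occurrence. Hence $\ps$ is associative and $\cS$ is a distance monoid.

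There is no genuine obstacle here: the whole argument is a bookkeeping exercise in swapping $\cps$ for $\ps$ on $S$. The only point requiring care — and the reason sum-completeness is needed rather than merely $0\in S$ — is that each intermediate result $r\ps s$ (equivalently $r\cps s$) must itself lie in $S$, so that the coincidence of the two operations can be invoked at the \emph{outer} application as well as the inner one. This closure is precisely the content of sum-completeness as reformulated in Proposition \ref{coincides}.
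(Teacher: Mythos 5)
Your proof is correct and matches the paper's intended argument exactly: the paper presents this as an immediate corollary of Proposition \ref{associative*} combined with Proposition \ref{coincides}, which is precisely the reduction you carry out (restrict associativity to $S$ for one direction, lift it via Proposition \ref{associative*} for the other, with sum-completeness guaranteeing the intermediate sums stay in $S$). Nothing further is needed.
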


\begin{remark}\label{simplify}
Suppose $\cR$ is a distance magma and $S\seq R$ is sum-complete.
\begin{enumerate}
\item Note that we may construct $\cS^*$ while viewing $S$ as a subset of the distance magma $\cS$. Using Notation \ref{S*expl}, Lemma \ref{lem:explSum}, and Proposition \ref{coincides}, it is straightforward to verify that the resulting distance magma $\cS^*$ does not depend on this choice of context. Note also that an $\cS^*$-metric space is approximately $(S,\cR)$-metric if and only if it is approximately $(S,\cS)$-metric. 
\item By Theorem \ref{thm:models}$(b)$ and Proposition \ref{coincides}, we may consider $\cS$ as an $\LDS$-substructure of $\cS^*$. However, $\cS$ is usually not an elementary substructure. In fact, one may show that $\cS\preceq\cS^*$ if and only if  $\cS$ is well-ordered with a maximal element, in which case $\cS=\cS^*$.
\end{enumerate}
\end{remark}

The goal of this subsection is the converse of Proposition \ref{approximable} for sum-complete sets. We first define certain well-behaved $S$-approximations.  


\begin{definition}\label{def:metSol}
Let $\cR$ be a distance magma. Assume $S\seq R$ is sum-complete and fix $X\seq S^*$.
\begin{enumerate}
\item $X$ is \textbf{$S$-bounded} if for all $\alpha\in X$ there is $s\in S$ with $\alpha\sleq s$ (i.e. if $\omega_S\in S$ or $\omega_S\not\in X$).
\item Suppose $X$ is $S$-bounded and $\Phi$ is an $S$-approximation of $X$. Then $\Phi$ is \textbf{metric} if $\Phi^+(X)\seq	 S$ and, for all $\alpha,\beta,\gamma\in X$, if $\alpha\sleq\beta\cps\gamma$ then $\Phi^+(\alpha)\leq\Phi^+(\beta)\ps \Phi^+(\gamma)$.

\item If $\Phi$ and $\Psi$ are $S$-approximations of $X$ then $\Phi$ \textbf{refines} $\Psi$ if $\Phi(\alpha)\seq\Psi(\alpha)$ for all $\alpha\in X$. 
\end{enumerate}
\end{definition}

We now give the main results concerning sum-complete sets. 

\begin{lemma}\label{metSol}
Let $\cR$ be a distance magma and fix a sum-complete subset $S\seq R$. Suppose $X\seq S^*$ is finite and $S$-bounded. For any $S$-approximation $\Psi$ of $X$ there is a metric $S$-approximation $\Phi$ of $X$, which refines $\Psi$. 
\end{lemma}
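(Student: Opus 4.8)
The plan is to reduce everything to a constraint-satisfaction problem about the upper endpoints and then solve it by an explicit minimization over finite ``triangle trees.'' First I would dispose of the easy parts. Put $\Phi^-(\alpha)=\Psi^-(\alpha)$ for every $\alpha\in X$ (and $\Phi(0)=\{0\}$): this value is automatically in $S$, lies strictly below $\alpha$, and leaves the lower endpoints unrefined, so — since being metric constrains only the upper endpoints — the whole statement reduces to choosing the $\Phi^+(\alpha)$. Concretely, using $S$-boundedness together with density of $S$, fix for each $\alpha$ an initial value $f_0(\alpha)\in S$ with $\alpha\sleq f_0(\alpha)\sleq\Psi^+(\alpha)$ (boundedness is exactly what is needed when $\Psi^+(\alpha)=\omega_S\notin S$). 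It then suffices to produce $\Phi^+\colon X\to S$ with $\alpha\sleq\Phi^+(\alpha)\sleq f_0(\alpha)$ for all $\alpha$ and with $\Phi^+(\alpha)\sleq\Phi^+(\beta)\ps\Phi^+(\gamma)$ whenever $\alpha\sleq\beta\cps\gamma$ (recall $r\ps s=r\cps s$ for $r,s\in S$ by Proposition \ref{coincides}).

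For the core construction I would call a finite rooted binary tree $T$, with root labeled $\alpha$ and every node labeled by an element of $X$, \emph{valid} if every internal node $\delta$ with children labeled $\delta_1,\delta_2$ satisfies $\delta\sleq\delta_1\cps\delta_2$. Assign $T$ a value $\mathrm{val}(T)\in S$ computed bottom-up: a leaf labeled $\ell$ gets $f_0(\ell)$, and an internal node combines its children's values via $\cps$ (which stays in $S$ by sum-completeness — note that $\cps$ need not be associative here, so the order of operations recorded by $T$ genuinely matters). A short induction using positivity and the order axiom shows $\alpha\sleq\mathrm{val}(T)$ for every valid tree rooted at $\alpha$, while the trivial one-node tree has value $f_0(\alpha)$; hence defining $\Phi^+(\alpha)=\inf\{\mathrm{val}(T):T\text{ valid, rooted at }\alpha\}$ gives $\alpha\sleq\Phi^+(\alpha)\sleq f_0(\alpha)$.

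The main obstacle is to show this infimum is attained and lands in $S$ (a priori an infimum of elements of $S$ can fall on a gap or successor cut of $S^*$), and this is where the key idea enters. I would prove a reduction lemma: if a root-to-leaf path in a valid tree repeats a label $\delta$, say at nodes $v_1$ above $v_2$, then replacing the subtree at $v_1$ by the subtree at $v_2$ yields a valid tree (both subtrees are rooted at $\delta$) whose value does not increase and which has strictly fewer nodes. The point is that moving up from $v_2$ to $v_1$ only $\cps$-combines with sibling subtrees, so by positivity $\mathrm{val}(\mathrm{subtree}(v_2))\sleq\mathrm{val}(\mathrm{subtree}(v_1))$, and then monotonicity of $\cps$ propagates this inequality to the whole tree. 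Iterating (the node count strictly drops, so this terminates), every value is matched by one coming from a tree in which each root-to-leaf path has distinct labels; such trees have depth at most $|X|$, so there are only finitely many, the infimum is a genuine minimum attained in $S$, and thus $\Phi^+(\alpha)\in S$.

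Finally I would verify the triangle condition and assemble the approximation. Given $\alpha\sleq\beta\cps\gamma$ with $\alpha,\beta,\gamma\in X$, joining optimal trees for $\beta$ and $\gamma$ under a new root labeled $\alpha$ is valid and has value $\Phi^+(\beta)\cps\Phi^+(\gamma)$, so $\Phi^+(\alpha)\sleq\Phi^+(\beta)\cps\Phi^+(\gamma)=\Phi^+(\beta)\ps\Phi^+(\gamma)$, which is exactly the metric requirement. Setting $\Phi(\alpha)=(\Psi^-(\alpha),\Phi^+(\alpha)]$ (and $\Phi(0)=\{0\}$) yields an $S$-approximation with $\Phi^+(X)\subseteq S$: it refines $\Psi$ because $\Phi^-(\alpha)=\Psi^-(\alpha)$ and $\Phi^+(\alpha)\sleq f_0(\alpha)\sleq\Psi^+(\alpha)$, and $\alpha\in\Phi(\alpha)$ since $\Psi^-(\alpha)\sle\alpha\sleq\Phi^+(\alpha)$. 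This $\Phi$ is the desired metric refinement.
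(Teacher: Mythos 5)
Your proof is correct, but it takes a genuinely different route from the paper's. The paper proves Lemma \ref{metSol} by a greedy one-pass recursion: enumerate $X=\{\alpha_0,\ldots,\alpha_n\}$ in increasing order, first refine $\Psi$ (using $S$-boundedness and density of $S$) so that $\Psi^+(X)\seq S$ and $\Psi^+(\alpha_k)\sle\alpha_{k+1}$, and then define $s_k=\min(\{\Psi^+(\alpha_k)\}\cup\{s_i\ps s_j:i,j<k,\ \alpha_k\sleq\alpha_i\cps\alpha_j\})$. The interval-separation preprocessing forces the sequence $(s_k)$ to be increasing, which is exactly what lets the paper restrict the triangle constraints to pairs $(i,j)$ with $i,j<k$: any constraint involving an index $\geq k$ then holds automatically by positivity and monotonicity. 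Your argument needs neither the enumeration nor the separation step: you treat all triples of $X$ symmetrically by minimizing over all parenthesized $\ps$-combinations consistent with the $\cS^*$-triangle constraints, and your subtree-replacement (pumping) lemma --- which correctly uses positivity along the $v_2$-to-$v_1$ path and the order axiom to propagate the decrease, and rightly tracks parenthesization since $\cps$ need not be associative --- bounds the search to trees of depth at most $|X|$, so the infimum is a genuine minimum lying in $S$. In exchange for the heavier machinery, your construction yields a canonical object the paper's does not: since any metric choice $g$ of upper endpoints with $g(\alpha)\sleq f_0(\alpha)$ satisfies $g(\alpha)\leq\mathrm{val}(T)$ for every valid tree $T$ rooted at $\alpha$ (by induction on $T$, using the validity condition at internal nodes), your $\Phi^+$ is the pointwise-largest metric assignment below $f_0$, whereas the paper's $s_k$ depend on the chosen enumeration. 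One further remark: the paper's $\min(\{\Psi^+(\alpha_k)\}\cup\cdots)$ pattern recurs almost verbatim in the proof of Lemma \ref{extAx}, so its version of the argument doubles as a template for the extension-axiom construction later in the paper; your closure-by-trees method is self-contained but does more work than this particular application requires.
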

\begin{proof}
For convenience, assume $0\in X$. Let $X=\{\alpha_0,\alpha_1,\ldots,\alpha_n\}$, with $0=\alpha_0\sle\alpha_1\sle\ldots\sle\alpha_n$. Fix an $S$-approximation $\Psi$ of $X$. Since $X$ is $S$-bounded, we may assume $\Psi^+(X)\seq S$. By density of $S$, we may also assume $\Psi^+(\alpha_k)\sle\alpha_{k+1}$ for all $1\leq k<n$. Given $1\leq k\leq n$, define 
\[J_k=\{(i,j):1\leq i, j<k,~\alpha_k\sleq\alpha_i\cps\alpha_j\}.
\]
We inductively define $s_0,s_1,\ldots,s_n\in S$ such that 
\begin{enumerate}[(1)]
\item $\alpha_k\sleq s_k\leq\Psi^+(\alpha_k)$ for all $1\leq k\leq n$;
\item for all $1\leq k\leq n$, if $(i,j)\in J_k$ then $s_k\leq s_i\ps s_j$.
\end{enumerate} 
Let $s_0=0$. Fix $1\leq k\leq n$ and suppose we have defined $s_i$ for all $1\leq i<k$. Define
$$
s_k=\min(\{\Psi^+(\alpha_k)\}\cup\{s_i\ps s_j:(i,j)\in J_k\}).
$$
Then $(2)$ is satisfied. For $(1)$, we have $s_k\leq\Psi^+(\alpha_k)$, so it remains to show $\alpha_k\sleq s_k$. Given $(i,j)\in J_k$, we have, by induction, $\alpha_k\sleq\alpha_i\cps\alpha_j\sleq s_i\cps s_j=s_i\ps s_j$. 

Define $\Phi:X\func \Int^*(S)$ such that $\Phi(0)=\{0\}$ and, for $k>0$, 
$\Phi(\alpha_k)=(\Psi^-(\alpha_k),s_k]$. Then, by $(1)$, $\Phi$ is an $S$-approximation of $X$, which refines $\Psi$. So it remains to show $\Phi$ is metric. Fix $\alpha_i,\alpha_j,\alpha_k\in X$ such that $\alpha_k\sleq\alpha_i\cps\alpha_j$. We want to show $s_k\leq s_i\ps s_j$. Since $(s_i)_{i=0}^k$ is increasing by construction, we may assume $i,j<k$. Then $(i,j)\in J_k$, and so $s_k\leq s_i\ps s_j$ by $(3)$.
\end{proof}

\begin{theorem}\label{metCor}
Let $\cR$ be a distance magma and fix a sum-complete subset $S\seq R$. Suppose $\cA=(A,d_A)$ is an $\cS^*$-colored space. Then $\cA$ is an $\cS^*$-metric space if and only if $\cA$ is approximately $(S,\cR)$-metric. 
\end{theorem}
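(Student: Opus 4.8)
The forward implication is already available: by Proposition \ref{approximable}, every approximately $(S,\cR)$-metric $\cS^*$-colored space is an $\cS^*$-metric space. So the real content is the converse, and the plan is as follows. Assume $\cA=(A,d_A)$ is an $\cS^*$-metric space, fix a finite $A_0\seq A$ and an arbitrary $S$-approximation $\Phi$ of $X:=\Dist(A_0,d_A)$; I must produce an $\cR$-metric $d_\Phi$ on $A_0$ with $d_\Phi(a,b)\in\Phi(d_A(a,b))\cap S$ for all $a,b\in A_0$. Since $X\seq S^*$ is finite, the strategy is to feed $\Phi$ into Lemma \ref{metSol} to obtain a \emph{metric} $S$-approximation $\Phi'$ of $X$ refining $\Phi$, and then set $d_\Phi(a,b)=(\Phi')^+(d_A(a,b))$.

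Granting such a $\Phi'$, I would verify the three axioms directly. As $\cA$ is an $\cS^*$-metric space, $d_A$ is symmetric and $d_A(a,b)=0$ iff $a=b$; since $\Phi'(0)=\{0\}$ while $0\sle\alpha\sleq(\Phi')^+(\alpha)$ for every $\alpha\neq 0$, the composite $d_\Phi=(\Phi')^+\circ d_A$ inherits symmetry and the identity-of-indiscernibles axiom. Its values lie in $S$ (because $\Phi'$ is metric, so $(\Phi')^+(X)\seq S$) and in fact in $\Phi'(d_A(a,b))\cap S\seq\Phi(d_A(a,b))\cap S$. For the triangle inequality, the $\cS^*$-metric inequality $d_A(a,c)\sleq d_A(a,b)\cps d_A(b,c)$ is an instance of $\alpha\sleq\beta\cps\gamma$ with $\alpha,\beta,\gamma\in X$, so the metric property of $\Phi'$ gives $(\Phi')^+(d_A(a,c))\leq(\Phi')^+(d_A(a,b))\ps(\Phi')^+(d_A(b,c))$; since $r\ps s\leq r\p s$ for all $r,s\in S$ by Proposition \ref{coincides}, this yields $d_\Phi(a,c)\leq d_\Phi(a,b)\p d_\Phi(b,c)$, as required.

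The one genuine obstacle is that Lemma \ref{metSol} requires $X$ to be $S$-bounded, which fails precisely when $\omega_S\in X$ but $\omega_S\notin S$ (that is, $S$ has no maximum yet some pair in $A_0$ realizes the top element). To remove this hypothesis I would truncate the top. Since $\sup S=\omega_S$, I can pick $s^*\in S$ lying strictly above both $\mu:=\max(X\setminus\{\omega_S\})$ and $\Phi^-(\omega_S)$; set $Y:=(X\setminus\{\omega_S\})\cup\{s^*\}$, which is $S$-bounded, and let $\sigma:X\func Y$ fix $X\setminus\{\omega_S\}$ and send $\omega_S\mapsto s^*$. Defining an $S$-approximation $\Psi$ of $Y$ by $\Psi(\alpha)=\Phi(\alpha)$ for $\alpha\neq s^*$ and $\Psi(s^*)=(\Phi^-(\omega_S),s^*]$, Lemma \ref{metSol} supplies a metric refinement $\Phi'$ of $\Psi$, and I set $d_\Phi=(\Phi')^+\circ\sigma\circ d_A$. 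The verification from the previous paragraph then carries over verbatim (with membership $d_\Phi(a,b)\in\Phi(d_A(a,b))\cap S$ checked separately for $d_A(a,b)=\omega_S$, using $\Psi(s^*)\seq\Phi(\omega_S)$), provided one establishes the key point that $\sigma$ preserves the triangle relation: $\alpha\sleq\beta\cps\gamma$ implies $\sigma(\alpha)\sleq\sigma(\beta)\cps\sigma(\gamma)$.

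This last implication is the crux, and I expect it to be the main thing requiring care; it follows from a short case analysis. If none of $\alpha,\beta,\gamma$ equals $\omega_S$, then $\sigma$ is the identity on them and there is nothing to prove. If $\alpha\neq\omega_S$ but some right-hand argument equals $\omega_S$, then positivity in $\cS^*$ (Proposition \ref{prop:approx}$(b)$, Theorem \ref{thm:S*DM}) gives $s^*\sleq\sigma(\beta)\cps\sigma(\gamma)$, while $\sigma(\alpha)=\alpha\sleq\mu\sle s^*$, so $\sigma(\alpha)\sle\sigma(\beta)\cps\sigma(\gamma)$. Finally, if $\alpha=\omega_S$, then $\sigma(\alpha)=s^*$ and $\beta\cps\gamma=\omega_S$ (as $\omega_S$ is maximal); when $\beta,\gamma\neq\omega_S$ this reads $\sigma(\beta)\cps\sigma(\gamma)=\beta\cps\gamma=\omega_S$, and when some argument equals $\omega_S$ positivity again gives $s^*\sleq\sigma(\beta)\cps\sigma(\gamma)$, so in either subcase $s^*\sleq\sigma(\beta)\cps\sigma(\gamma)$. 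This completes the reduction, and hence the proof.
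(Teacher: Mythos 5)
Your proof is correct and follows essentially the same route as the paper: reduce to the $S$-bounded case by truncating all $\omega_S$-distances to a single element of $S$ chosen above $\Phi^-(\omega_S)$ and above all other distances (your $s^*$ is exactly the paper's $t$), then apply Lemma \ref{metSol} to get a metric refinement $\Phi_0$ and set $d_\Phi=\Phi_0^+\circ d_A$. Your case analysis showing the truncation map $\sigma$ preserves $\cS^*$-triangles merely verifies a step the paper asserts without proof (``Then $d'_A$ is an $\cS^*$-metric on $A_0$''), so it is a welcome but not substantively different addition.
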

\begin{proof}
We have the reverse direction by Proposition \ref{approximable}. For the forward direction, assume $\cA$ is an $\cS^*$-metric space. Fix a finite subset $A_0\seq A$ and an $S$-approximation $\Phi$ of $\Dist(A_0,d_A)$. We want to find an $\cR$-metric $d_\Phi:A_0\times A_0\func S$ such that, for all $x,y\in A_0$, $d_\Phi(x,y)\in\Phi(d_A(x,y))$. 

Suppose first that $\Dist(A_0,d_A)$ is not $S$-bounded. Then we may fix $t\in S$, with $\Phi^-(\omega_S)<t$ and $\alpha\sleq t$ for all $\alpha\in\Dist(A_0,d_A)\backslash\{\omega_S\}$, and define $d'_A:A_0\times A_0\func S^*$ such that
$$
d'_A(x,y)=\begin{cases}
d_A(x,y) & \text{if $d_A(x,y)\sle\omega_S$}\\
t & \text{otherwise.}
\end{cases}
$$
Then $d'_A$ is an $\cS^*$-metric on $A_0$, $\Dist(A_0,d'_A)$ is $S$-bounded, and $\Phi$ is still an $S$-approximation of $\Dist(A_0,d'_A)$. Therefore, without loss of generality, we assume $\Dist(A_0,d_A)$ is $S$-bounded.

By Lemma \ref{metSol}, there is a metric $S$-approximation $\Phi_0$ of $\Dist(A_0,d_A)$, which refines $\Phi$. Define $d_\Phi:A_0\times A_0\func S$ such that $d_\Phi(x,y)=\Phi_0^+(d_A(x,y))$.
\end{proof}

\subsection{Difference-completness}

We now define a property of distance magmas $\cR$, under which we can define a generalized notion of ``absolute value of the difference between two distances". 

\begin{definition}
A distance magma $\cR$ is \textbf{difference-complete} if, for all $r,s\in R$, the set $\{x\in R:r\leq s\p x\text{ and }s\leq r\p x\}$ contains a minimal element. In this case we set $|r\m s|=\min\{x\in R:r\leq s\p x\text{ and }s\leq r\p x\}$.
\end{definition}

The next proposition shows that this generalized difference operation behaves like the usual absolute value operation in many ways.





\begin{proposition}\label{compProps}
Suppose $\cR$ is a difference-complete distance magma.
\begin{enumerate}[$(a)$]
\item For all $r,s\in R$, if $s\leq r$ then $|r\m s|=\inf\{x\in R:r\leq s\p x\}$.
\item For all $r,s,t\in R$, $|r\m s|\leq t$ if and only if $r\leq s\p t$ and $s\leq r\p t$.
\item For all $r,s\in R$, $|r\m s|\leq\max\{r,s\}\leq r\p s$.
\item For all $r,s\in R$, $|r\m s|=|s\m r|$, and $|r\m s|=0$ if and only if $r=s$.
\item Define $d:R\times R\func R$ such that $d(r,s)=|r\m s|$. Then $d$ is an $\cR$-metric on $R$ if and only if $\p$ is associative.
\end{enumerate}
\end{proposition}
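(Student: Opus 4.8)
My plan is to dispatch parts $(a)$–$(d)$ quickly from the definition and then use them as the backbone of part $(e)$, which is where the real content lies. Throughout I would abbreviate $D(r,s)=\{x\in R:r\leq s\p x\text{ and }s\leq r\p x\}$, so that $|r\m s|=\min D(r,s)$ by difference-completeness.

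For part $(a)$, I would note that when $s\leq r$, positivity gives $s\leq r\leq r\p x$ for every $x$, so the second defining inequality is automatic and $D(r,s)=\{x:r\leq s\p x\}$; since this set has a minimum, its infimum equals $|r\m s|$. For $(b)$, the forward direction applies the order axiom to $r\leq s\p|r\m s|$ and $s\leq r\p|r\m s|$ (valid since $|r\m s|\in D(r,s)$) together with $|r\m s|\leq t$, while the reverse direction just observes that the two stated inequalities say exactly $t\in D(r,s)$. Part $(c)$ I would get by feeding $t=\max\{r,s\}$ into $(b)$, since both resulting inequalities reduce to positivity, and noting $\max\{r,s\}\leq r\p s$ by positivity and commutativity. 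Part $(d)$ is immediate: $D(r,s)=D(s,r)$ by symmetry of the definition, and $0\in D(r,s)$ exactly when $r\leq s$ and $s\leq r$, i.e. $r=s$.

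Turning to $(e)$, parts $(d)$ already supply axioms $(i)$ and $(ii)$ of an $\cR$-metric, so $d$ is an $\cR$-metric precisely when the triangle inequality $|r\m t|\leq|r\m s|\p|s\m t|$ holds for all $r,s,t$. For the easy direction I would assume associativity, set $a=|r\m s|$ and $b=|s\m t|$, and use $r\leq s\p a$ and $s\leq t\p b$ to compute $r\leq s\p a\leq(t\p b)\p a=t\p(a\p b)$, and symmetrically $t\leq r\p(a\p b)$; by part $(b)$ this is exactly $|r\m t|\leq a\p b$.

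The hard part, and the step I expect to be the main obstacle, is the converse: extracting associativity from the triangle inequality. The key idea I would use is to test the inequality on a collinear triple chosen to encode an associativity relation. Given arbitrary $a,b,c$, I would set $r=a$, $s=a\p b$, $t=(a\p b)\p c$, so that $r\leq s\leq t$ by positivity, and use part $(a)$ to check $|r\m s|\leq b$ and $|s\m t|\leq c$. Since $r\leq t$, part $(b)$ shows the only substantive half of the triangle inequality for this triple is $t\leq a\p(|r\m s|\p|s\m t|)$; monotonicity of $\p$ then forces $(a\p b)\p c\leq a\p(b\p c)$. Finally I would apply this inequality with the variables permuted (with $c,b,a$ in place of $a,b,c$) and use commutativity to rewrite both sides, obtaining $a\p(b\p c)\leq(a\p b)\p c$; the two inequalities together give associativity. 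The conceptual crux is realizing that a one-dimensional configuration already captures the full force of the triangle inequality, so that its nontrivial content is exactly one direction of associativity, with commutativity supplying the other.
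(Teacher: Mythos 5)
Your proof is correct in all parts. Note, however, that the paper itself offers no argument to compare against: it dismisses $(a)$--$(d)$ as trivial consequences of the definitions (your treatment via the set $D(r,s)$ confirms this) and explicitly leaves part $(e)$ as an exercise. Your proposal is therefore best read as a complete solution to that exercise. The forward direction (associativity implies the triangle inequality) is the routine half, and you execute it as expected. For the converse, your choice of the collinear triple $r=a$, $s=a\p b$, $t=(a\p b)\p c$ is the right test configuration: it yields $(a\p b)\p c\leq a\p(b\p c)$ via part $(b)$ and monotonicity, and then --- since translation invariance in a distance magma is non-strict, so no cancellation argument is available --- obtaining the reverse inequality by permuting the variables and invoking commutativity is precisely the step needed to close the argument. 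One cosmetic remark: where you cite part $(a)$ to verify $|r\m s|\leq b$ and $|s\m t|\leq c$, part $(b)$ is the cleaner reference (the witnessing inequalities $s\leq r\p b$ and $t\leq s\p c$ hold with equality by construction, and the remaining ones follow from positivity), but this does not affect the validity of the proof.
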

\begin{proof}
Parts $(a)$ through $(d)$ follow trivially from the definitions. We leave part $(e)$ as an exercise.
\end{proof}

\begin{remark}\label{rem:USC}
A consequence of part $(b)$ is that if $\cR$ is difference-complete then, for any $r\in R$, the function $f:x\mapsto x\p r$ is \emph{upper semi-continuous}: for all $x_0\in R$ and $s>f(x_0)$, there is $t>x_0$ such that $f(x)<s$ for all $x<t$ (simply take $t=|r\m s|$). 
\end{remark}

The main result of this section  is that the distance magma $\cS^*$ is \emph{always} difference-complete.

\begin{theorem}\label{S*complete}
Suppose $\cR$ is a distance magma and $S\seq R$, with $0\in S$. Then $\cS^*$ is a difference-complete distance magma.
\end{theorem}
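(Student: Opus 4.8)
The plan is to establish difference-completeness of $\cS^*$ directly from the definition: for arbitrary $\alpha,\beta\in S^*$ I must produce a \emph{minimal} element of the set $D(\alpha,\beta)=\{\delta\in S^*:\alpha\sleq\beta\cps\delta\text{ and }\beta\sleq\alpha\cps\delta\}$. By symmetry of the defining conditions I may assume $\beta\sleq\alpha$, and then (using $\beta\sleq\alpha\sleq\alpha\cps\delta$, which holds automatically by Proposition \ref{prop:approx}$(b)$) the set reduces to $D(\alpha,\beta)=\{\delta\in S^*:\alpha\sleq\beta\cps\delta\}$. Since $(S^*,\sleq)$ is a complete linear order, $\gamma:=\inf D(\alpha,\beta)$ exists in $S^*$; the entire content of the theorem is showing that the infimum is actually \emph{attained}, i.e. $\gamma\in D(\alpha,\beta)$, equivalently $\alpha\sleq\beta\cps\gamma$.

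First I would record the easy case $\alpha=\beta$, where $0\in D(\alpha,\beta)$ gives $|\alpha\m\beta|=0$, so assume $\beta\sle\alpha$. The key step is to show $\alpha\sleq\beta\cps\gamma$ using the explicit formula for $\cps$ from Lemma \ref{lem:explSum}$(c)$, namely $\beta\cps\gamma=\inf\{s\cps u:s,u\in S,\ \beta\sleq s,\ \gamma\sleq u\}$. Suppose toward a contradiction that $\beta\cps\gamma\sle\alpha$. By density of $S$ fix $t\in S$ with $\beta\cps\gamma\sleq t\sle\alpha$; then by the infimum formula there are $s,u\in S$ with $\beta\sleq s$, $\gamma\sleq u$, and $s\cps u\sleq t\sle\alpha$. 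The goal is now to contradict the minimality of $\gamma$ by exhibiting a $\delta\in D(\alpha,\beta)$ strictly below $\gamma$. The idea is that the witnesses $s,u$ give slack: since $s\cps u\sle\alpha$ with $\beta\sleq s$, the ``gap'' between $\gamma$ and $\beta$ with respect to $\alpha$ is not tight, so some $\delta\sle\gamma$ still satisfies $\alpha\sleq\beta\cps\delta$. I would make this precise by choosing $\delta$ via density just below $\gamma$ and checking $\beta\cps\delta$ does not drop below $\alpha$, invoking Lemma \ref{lem:explSum}$(c)$ and the monotonicity built into the \emph{order} axiom for $\cps$ (Theorem \ref{thm:S*DM}).

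The main obstacle will be the final contradiction step: controlling how $\beta\cps\delta$ behaves as $\delta$ decreases past $\gamma$, since $\cps$ need not be strictly monotone and $S^*$ contains gap and successor cuts where the order behaves subtly. The clean route is likely to argue contrapositively at the level of the defining inequality rather than manipulating $\delta$ directly: show that if $\alpha\not\sleq\beta\cps\gamma$ then every $\delta\sleq\gamma$ already fails $\alpha\sleq\beta\cps\delta$ \emph{except} possibly at $\gamma$ itself, forcing $\gamma$ to be minimal in $D(\alpha,\beta)$ after all and hence attained. Concretely, I expect to show that the cut defined by $\gamma$ is precisely the set of $x\in S$ with $\alpha\sleq\beta\cps x$, using Lemma \ref{lem:explSum}$(b)$ to translate $\beta\cps x=\sup\{y\in S:y\leq\beta\p x\}$ into a condition on $\p$ in $\cR$, and then verifying that this cut is either principal or a successor cut — never a gap cut — so its infimum lies in $D(\alpha,\beta)$. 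Verifying that no gap arises is where the real work lies, and I would handle it by the approximation machinery of Proposition \ref{prop:approx}$(a)$, reducing the minimality question to the existence of suitable $\cR$-triangles in $\Delta(S,\cR)$ witnessed by elements of $S$ arbitrarily close to $\gamma$ from above.
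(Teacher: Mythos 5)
Your setup — reduce by symmetry to $\beta\sleq\alpha$, note $D(\alpha,\beta)=\{\delta\in S^*:\alpha\sleq\beta\cps\delta\}$, take $\gamma=\inf D(\alpha,\beta)$ using completeness of $(S^*,\sleq)$, and attack via Lemma \ref{lem:explSum}$(c)$ — is exactly the paper's skeleton. But both concrete routes you propose for the final contradiction are flawed. The middle paragraph's target is impossible: you can never exhibit $\delta\in D(\alpha,\beta)$ strictly below $\gamma$, since $\gamma$ is by definition a lower bound of $D(\alpha,\beta)$; and your ``slack'' heuristic runs backwards, because from witnesses $s,u\in S$ with $\beta\sleq s$, $\gamma\sleq u$, $s\cps u\sle\alpha$, monotonicity gives $\beta\cps\delta\sleq\beta\cps u\sleq s\cps u\sle\alpha$ for \emph{every} $\delta\sleq u$, so all $\delta\sleq\gamma$ fail the defining inequality. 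The missing observation is that $D(\alpha,\beta)$ is upward closed (by the order axiom for $\cps$), so the witnesses cut the other way: $\beta\cps u\sle\alpha$ means $u\not\in D(\alpha,\beta)$, hence $u$ is a lower bound for $D(\alpha,\beta)$, giving $u\sleq\gamma\sleq u$ and thus $\gamma=u\in S$; the paper first disposes of the case $\gamma\in S$ via Proposition \ref{dense}$(b)$ (a nonempty subset of $S^*$ whose infimum lies in $S$ attains it) and then assumes $\gamma\not\in S$, so $\gamma=u\in S$ is immediately the contradiction. (A minor additional slip: from $\beta\cps\gamma\sleq t$ you cannot extract $s,u$ with $s\cps u\sleq t$, since an infimum need not be attained; you only get $s\cps u\sle\alpha$ from $\beta\cps\gamma\sle\alpha$, which is all you need.)

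Your fallback in the last paragraph rests on a false claim: the cut $X=\{x\in S:\alpha\sleq\beta\cps x\}$ \emph{can} be a gap cut. Take $\cR=(\R^{\geq0},+,\leq,0)$, $S=\Q^{\geq0}$, $\beta=0$, and $\alpha=g_X$ for the gap cut $X=\{x\in\Q^{\geq0}:\sqrt{2}<x\}$; then $\beta\cps x=x$ and $X$ is exactly that gap cut, with $|\alpha\cms\beta|=g_X$. What saves difference-completeness in the gap case is not the absence of gaps but that $S^*$ contains the element $g_X$ realizing the infimum: every $t\in X$ satisfies $\alpha\sleq\beta\cps t\sleq s\cps t$ for all $s\in S$ with $\beta\sleq s$, and Lemma \ref{lem:explSum}$(c)$ expresses $\beta\cps g_X$ as the infimum of exactly these values $s\cps t$, whence $\alpha\sleq\beta\cps g_X$ and $g_X\in D(\alpha,\beta)$. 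So the correct dichotomy is not ``principal or successor, never gap,'' but ``$\gamma\in S$ (Proposition \ref{dense}$(b)$) versus $\gamma\not\in S$ (Lemma \ref{lem:explSum}$(c)$ plus upward closure of $D(\alpha,\beta)$),'' which is precisely how the paper's short proof runs. You had all the right tools on the table; the argument fails at the one step where they must be assembled.
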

\begin{proof}
Fix $\alpha,\beta\in S^*$ and let $\gamma=\inf\{x\in S^*:\alpha\sleq\beta\cps x\text{ and }\beta\sleq\alpha\cps x\}$. We want to show $\alpha\sleq\beta\cps\gamma$ and $\beta\sleq\alpha\cps\gamma$. If $\gamma\in S$ then we have the result by Proposition \ref{dense}$(b)$. So assume $\gamma\not\in S$. Without loss of generality, we may assume $\beta\sleq\alpha$. So we just need to show $\alpha\sleq\beta\cps\gamma$.  If $\beta\cps\gamma\sle\alpha$ then by Lemma \ref{lem:explSum}$(c)$, there are $s,t\in S$ such that $\beta\sleq s$, $\gamma\sleq t$, and $s\cps t\sle\alpha$. Then $\beta\cps t\sle \alpha$ and so $t\sleq\gamma$ by definition of $\gamma$. But then $\gamma=t\in S$, which contradicts our assumptions.
\end{proof}

For clarity, we repeat the generalized difference operation on $\cS^*$.

\begin{definition}\label{def:minus}
Fix a distance magma $\cR$ and $S\seq R$, with $0\in S$. Given $\alpha,\beta\in S^*$, define
$$
|\alpha\cms\beta|:=\inf\{x\in S^*:\alpha\sleq\beta\cps x\text{ and }\beta\sleq\alpha\cps x\}. 
$$
\end{definition}

Recall that $\alpha\cps\beta$ is the largest possible distance in a logical $S^*$-triangle containing distances $\alpha$ and $\beta$. Combining Theorem \ref{S*complete} with Proposition \ref{prop:expl}, we see that $|\alpha\cms\beta|$ is the smallest possible distance.

\begin{corollary}\label{Delta*}
Suppose $\cR$ is a distance magma and $S\seq R$, with $0\in S$. Given $\alpha,\beta\in S^*$, we have $\Sigma(\alpha,\beta)=\{\gamma\in S^*:|\alpha\cms\beta|\sleq\gamma\sleq\alpha\cps\beta\}$, and so $|\alpha\cms\beta|=\inf\Sigma(\alpha,\beta)$.
\end{corollary}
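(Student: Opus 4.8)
The plan is to reduce the claim about $\Sigma(\alpha,\beta)$ to a purely order-algebraic statement about $\cps$ via Proposition \ref{prop:expl}, and then to use difference-completeness of $\cS^*$ (Theorem \ref{S*complete}) to identify the lower endpoint of the interval. First I would use Proposition \ref{prop:expl} to rewrite membership in $\Sigma(\alpha,\beta)$: a point $\gamma\in S^*$ lies in $\Sigma(\alpha,\beta)$ if and only if $(\alpha,\beta,\gamma)$ is an $\cS^*$-triangle, i.e. $\gamma\sleq\alpha\cps\beta$, $\alpha\sleq\beta\cps\gamma$, and $\beta\sleq\alpha\cps\gamma$. The first inequality is exactly the upper bound $\gamma\sleq\alpha\cps\beta$ in the claimed interval (and is in any case subsumed by $\alpha\cps\beta=\sup\Sigma(\alpha,\beta)$). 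So the real content is to show that the remaining two inequalities, taken together, are equivalent to $|\alpha\cms\beta|\sleq\gamma$.

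The forward direction is immediate from Definition \ref{def:minus}: if $\alpha\sleq\beta\cps\gamma$ and $\beta\sleq\alpha\cps\gamma$, then $\gamma$ belongs to the set whose infimum defines $|\alpha\cms\beta|$, so $|\alpha\cms\beta|\sleq\gamma$. The reverse direction is where difference-completeness enters, and it is the main step. By Theorem \ref{S*complete}, $\cS^*$ is difference-complete, so the infimum in Definition \ref{def:minus} is \emph{attained}; that is, $|\alpha\cms\beta|$ itself satisfies $\alpha\sleq\beta\cps|\alpha\cms\beta|$ and $\beta\sleq\alpha\cps|\alpha\cms\beta|$. Now assuming $|\alpha\cms\beta|\sleq\gamma$ and using that $\cps$ is order-preserving (the \emph{order} axiom of the distance magma $\cS^*$), I obtain $\alpha\sleq\beta\cps|\alpha\cms\beta|\sleq\beta\cps\gamma$ and symmetrically $\beta\sleq\alpha\cps\gamma$, as required. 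This establishes that $\gamma\in\Sigma(\alpha,\beta)$ if and only if $|\alpha\cms\beta|\sleq\gamma\sleq\alpha\cps\beta$.

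Finally, to conclude $|\alpha\cms\beta|=\inf\Sigma(\alpha,\beta)$, I would note that $|\alpha\cms\beta|$ is not merely a lower bound but actually an element of $\Sigma(\alpha,\beta)$. Two of the three triangle conditions for $(\alpha,\beta,|\alpha\cms\beta|)$ are precisely the attained inequalities above; the third, $|\alpha\cms\beta|\sleq\alpha\cps\beta$, follows from Proposition \ref{compProps}$(c)$ applied to the difference-complete distance magma $\cS^*$, which gives $|\alpha\cms\beta|\sleq\max\{\alpha,\beta\}\sleq\alpha\cps\beta$. Hence $|\alpha\cms\beta|$ is the least element of $\Sigma(\alpha,\beta)$, and so its infimum equals it.

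I expect the only subtle point is to remember that difference-completeness is exactly what guarantees the defining infimum of $|\alpha\cms\beta|$ is attained: without Theorem \ref{S*complete}, the reverse implication in the second paragraph would break down. Thus the crux of the argument is the invocation of difference-completeness together with monotonicity of $\cps$, and there is no delicate computation to grind through — everything else is bookkeeping with the already-established characterization of (logical) $\cS^*$-triangles.
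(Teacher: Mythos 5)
Your proposal is correct and follows exactly the route the paper intends: the corollary is stated there without a separate proof precisely because it is the combination of Proposition \ref{prop:expl} (identifying $\Sigma(\alpha,\beta)$ with the set of $\gamma$ making $(\alpha,\beta,\gamma)$ an $\cS^*$-triangle) with Theorem \ref{S*complete} (difference-completeness, i.e.\ attainment of the defining infimum of $|\alpha\cms\beta|$), plus monotonicity of $\cps$. Your added observation that $|\alpha\cms\beta|\in\Sigma(\alpha,\beta)$, via Proposition \ref{compProps}$(c)$, is a correct and welcome bit of bookkeeping that the paper leaves implicit.
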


\section{Associativity, Amalgamation, and the Four-Values Condition}\label{sec:4VC}

We have now laid the foundation for the model theoretic study of generalized metric spaces, and the next task is to find concrete spaces to study. A natural choice is to consider ``generic objects", in the sense of homogeneous structures and \Fraisse\ limits. In particular, our motivating example is the \textit{rational Urysohn space}, i.e. the unique countable, universal, and ultrahomogeneous metric space with rational distances. In \cite{DLPS}, generalizations of this space are obtained by replacing $\Q^{\geq0}$ with other countable subsets $S\seq\R^{\geq0}$. The sets $S$ for which an analogous metric space exists are characterized in \cite{DLPS} by a property called the \textit{four-values condition}. 

We first generalize the four-values condition to arbitrary distance magmas. Our treatment closely follows \cite{DLPS}. In particular, Proposition \ref{DLPS1.6}, which is the main result of this section, is a direct generalization of \cite[Section 1.3]{DLPS}. Throughout the section, we fix a distance magma $\cR=(R,\p,\leq,0)$.

\begin{definition}
A subset $S\seq R$ satisfies the \textbf{four-values condition in $\cR$} if for all $u_1,u_2,v_1,v_2\in S$, if there is some $s\in S$ such that $(s,u_1,u_2)$ and $(s,v_1,v_2)$ are $\cR$-triangles, then there is some $t\in S$ such that $(t,u_1,v_1)$ and $(t,u_2,v_2)$ are $\cR$-triangles.
\end{definition}

\begin{figure}[htbp]
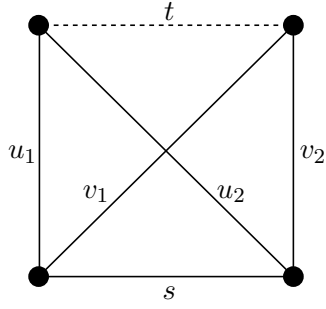

\begin{center}
\begin{lpic}[t(0mm),b(0mm),draft,clean]{crown(.75)}
\lbl[t]{-1,25;$u_1$}
\lbl[t]{36,18;$u_2$}
\lbl[t]{12,18;$v_1$}
\lbl[t]{50.5,25;$v_2$}
\lbl[t]{25,0;$s$}
\lbl[t]{25,50.5;$t$}
\end{lpic}
\end{center}
\caption{the four-values condition}
\label{4VCfig}
\end{figure}

The four-values condition describes the amalgamation of two $3$-point metric spaces over a common $2$-point subspace (Figure \ref{4VCfig}). In Proposition \ref{DLPS1.6}, we show that this instance of amalgamation is enough to obtain amalgamation for any two finite $\cR$-metric spaces with distances in $S$.

\begin{definition}
Given $S\seq R$, with $0\in S$, let $\KRS{S}{\cR}$ denote the class of finite $\cR$-metric spaces with distances in $S$. Let $\cK_\cR$ denote $\KRS{R}{\cR}$.
\end{definition}

Given a distance magma $\cR$ and a subset $S\seq R$, with $0\in S$, we use our original interpretation of $\cR$-metric spaces as $\cL_S$-structures to view $\KRS{S}{\cR}$ as a class of relational structures amenable to classical \Fraisse\ theory (see \cite[Chapter 7]{Hobook}). In particular, it is straightforward to see that the class $\KRS{S}{\cR}$ always satisfies the hereditary property and the joint embedding property. Therefore, our focus is on the amalgamation property. 

The next result uses the four-values condition to characterize the amalgamation property for $\KRS{S}{\cR}$. This result is a direct generalization of \cite[Proposition 1.6]{DLPS}. The proof is the same as what can be found in \cite{DLPS}, modulo adjustments made to account for the possibility that $\cR$ is not difference-complete. We include the steps requiring these adjustments, and refer the reader to \cite{DLPS} for the remaining details.

\begin{proposition}\label{DLPS1.6}
Fix $S\seq R$, with $0\in S$. Then $\KRS{S}{\cR}$ has the (disjoint) amalgamation property if and only if $S$ satisfies the four-values condition in $\cR$.
\end{proposition}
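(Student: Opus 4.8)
The statement to prove is Proposition \ref{DLPS1.6}: for $S \seq R$ with $0 \in S$, the class $\KRS{S}{\cR}$ has the disjoint amalgamation property if and only if $S$ satisfies the four-values condition in $\cR$.

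Let me think about both directions.

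**The forward direction (amalgamation ⟹ four-values).** This should be the easy direction. The four-values condition is precisely the statement that a specific small amalgamation problem is solvable. Given $u_1, u_2, v_1, v_2 \in S$ and $s \in S$ such that $(s, u_1, u_2)$ and $(s, v_1, v_2)$ are $\cR$-triangles, I form two 3-point metric spaces that share a common 2-point edge.

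Let me set this up. I have points, say, a triangle with vertices labeled by distances. The figure (Figure \ref{4VCfig}) shows a "crown": $s$ is at the bottom, $t$ at the top, and $u_1, u_2, v_1, v_2$ are the four side edges. So the picture is: two points $a, b$ with $d(a,b) = s$. Point $c$ with $d(a,c) = u_1$, $d(b,c) = u_2$ (so $(s, u_1, u_2)$ is a triangle). Point $c'$ with $d(a,c') = v_1$, $d(b,c') = v_2$ (so $(s, v_1, v_2)$ is a triangle). These are two metric spaces $\{a, b, c\}$ and $\{a, b, c'\}$ sharing the common subspace $\{a, b\}$. Amalgamating them (disjointly, so $c \neq c'$) forces a distance $t = d(c, c')$, and for the amalgam to be a metric space we need $(t, u_1, v_1)$ to be a triangle (from the triangle $a, c, c'$) and $(t, u_2, v_2)$ to be a triangle (from the triangle $b, c, c'$). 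Wait, let me double check against the figure: $u_1 = d(a,c)$, $v_1 = d(a,c')$, so the triangle at vertex $a$ has edges $u_1, v_1$, and the cross-edge $t$; thus $(t, u_1, v_1)$ is a triangle. Similarly at $b$: edges $u_2, v_2, t$, so $(t, u_2, v_2)$ is a triangle. Good, this matches the four-values condition's conclusion.

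So the forward direction: assume amalgamation. Given the hypothesis of the four-values condition, construct these two 3-point spaces (I should verify they ARE valid $\cR$-metric spaces, using that the relevant triples are $\cR$-triangles, plus symmetry and the fact that $0 \in S$ and distances are nonzero for distinct points — need $u_1, u_2, v_1, v_2, s$ all nonzero, or handle degeneracies). Amalgamate over the common 2-point subspace $\{a,b\}$ to get a finite $\cR$-metric space with distances in $S$ containing a point $c$ and a point $c'$; set $t = d(c, c') \in S$. Then $t$ witnesses the conclusion. Need to be slightly careful about the degenerate cases where some of the points coincide or some distances are $0$; these should be handled directly.

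**The reverse direction (four-values ⟹ amalgamation).** This is the substantial direction and where the real work lies. I need to show that the four-values condition, which only asserts solvability of a single "crown" amalgamation, propagates to arbitrary finite amalgamation problems. The standard strategy (following Delhommé–Laflamme–Pouzet–Sauer) is amalgamation by induction, reducing a general amalgamation to a sequence of one-point extensions, each of which is handled by iterating the four-values condition.

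Concretely: suppose I have two finite $\cR$-metric spaces $\cB$ and $\cC$ (with distances in $S$) sharing a common substructure $\cA = \cB \cap \cC$. I want to define an $\cR$-metric on the disjoint union $\cB \cup_\cA \cC$ (amalgamating over $\cA$, keeping $\cB$ and $\cC$ disjoint outside $\cA$) extending both metrics. The only distances I need to define are $d(b, c)$ for $b \in \cB \setminus \cA$ and $c \in \cC \setminus \cA$. I reduce to the case $|\cB \setminus \cA| = |\cC \setminus \cA| = 1$, i.e. a single point $b$ on one side and a single point $c$ on the other, by adding points one at a time — the standard inductive reduction.

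So the heart is: given $\cA$ and two single-point extensions $\cA \cup \{b\}$ and $\cA \cup \{c\}$, find an admissible distance $d(b,c) \in S$ so that $\cA \cup \{b, c\}$ is an $\cR$-metric space. "Admissible" means: for every $a \in \cA$, the triple $(d(b,c), d(b,a), d(a,c))$ must be an $\cR$-triangle; additionally the trivial triangle with $b, c$ themselves. The claim is that such a $d(b,c)$ exists precisely by iterating the four-values condition over the points of $\cA$. This is done by induction on $|\cA|$: order the points $a_1, \ldots, a_n$ of $\cA$; at each stage I have a "candidate constraint" on $d(b,c)$ arising from a triangle, and the four-values condition lets me merge the constraint from $a_i$ with the accumulated constraint, producing a single distance satisfying all triangle inequalities simultaneously. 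Precisely: I maintain a distance $s_i \in S$ such that $(s_i, d(b,a_i), d(a_i, c))$-style constraints are consistent, and use four-values to pass from $s_i$ to $s_{i+1}$ absorbing the next point. The base case $|\cA| = 0$ (or $1$) is trivial since any triangle $(d(b,c), d(b,a_1), d(a_1,c))$ is directly an instance.

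**Where the adjustments come in.** The remark before the statement warns that the proof follows \cite{DLPS} except for adjustments handling the possibility that $\cR$ is not difference-complete. In the classical (difference-complete) setting, one can explicitly take $d(b,c)$ in an interval $[|r \m s|, r \cps s]$ and argue with absolute values; here I cannot assume such minima/maxima exist in $S$ itself. So the inductive step must be phrased purely in terms of existence of an $\cR$-triangle satisfying the constraints — i.e. directly invoking the four-values condition as an existence statement rather than computing $|{\cdot} \m {\cdot}|$. This is the reason the author isolates "the steps requiring these adjustments" and defers the rest to \cite{DLPS}. I expect the main obstacle is exactly this: carefully formulating the inductive invariant and the application of the four-values condition so that it never references difference-completeness, only the raw existence of triangles. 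The bookkeeping of which four triples play the roles of $u_1, u_2, v_1, v_2, s, t$ at each inductive step — and checking the hypotheses $(s, u_1, u_2)$, $(s, v_1, v_2)$ are genuine $\cR$-triangles before applying four-values — is the delicate part. I would write out the one-point-amalgamation-by-induction lemma in full, verify the triangle hypotheses at each stage, and then cite \cite{DLPS} for the routine reduction of general amalgamation to the one-point case and for the disjointness of the amalgam.
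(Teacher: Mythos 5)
Your forward direction and your reduction of the general problem to the two-new-points case ($X_1\backslash X_2=\{x_1\}$, $X_2\backslash X_1=\{x_2\}$, induction on the number of new points) match the paper exactly, and deferring the induction step to \cite{DLPS} is also what the paper does. The gap is in the heart of the base case: your mechanism of "maintaining a distance $s_i$ and using four-values to merge the accumulated constraint with the constraint from $a_{i+1}$" does not typecheck against the hypothesis of the four-values condition. That hypothesis requires two genuine $\cR$-triangles \emph{sharing a common edge} $s$. Two constraints coming from points $a_j,a_k$ of the common part can be merged, since $(d(a_j,a_k),d_1(x_1,a_j),d_1(x_1,a_k))$ and $(d(a_j,a_k),d_2(x_2,a_j),d_2(x_2,a_k))$ are genuine triangles with common edge $d(a_j,a_k)$; but the output $t$ of such a merge is not an edge of any given triangle together with the next point's data, so there is nothing to instantiate $s,u_1,u_2,v_1,v_2$ with at the next step, and the iteration stalls after one application. (Your pairwise merges could in principle be completed by a Helly-type argument: each constraint set $\{t\in S:(t,d_1(x_1,a),d_2(x_2,a))\text{ is an $\cR$-triangle}\}$ is order-convex, and pairwise-intersecting convex sets in a chain have a common point; but this idea is not in your proposal.)

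The paper avoids iteration entirely and applies the four-values condition exactly once, to two extremal witnesses in $X_1\cap X_2$: a point $y$ minimizing $d_1(x_1,x)\p d_2(x_2,x)$, which subsumes all the upper-bound constraints at once, and a point $y'$ chosen so that $D(d_1(x_1,y'),d_2(x_2,y'))$ is contained in every $D(d_1(x_1,x),d_2(x_2,x))$, where $D(r,s)=\{x\in R: r\leq s\p x\text{ and }s\leq r\p x\}$ — possible because these sets are upward closed in a total order, hence nested — which subsumes all the lower-bound constraints. The common edge is then $d_1(y,y')=d_2(y,y')$, and the resulting $t$ (bumped to $\min\{d_1(x_1,y),d_1(x_1,y')\}$ if $t=0$, which handles the positivity issue you left vague) satisfies $(\dagger)$ for every $x\in X_1\cap X_2$ simultaneously. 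You correctly anticipated that the adjustment for non-difference-complete $\cR$ is to avoid computing $|r\m s|$, but the actual device — replacing "maximize $|r\m s|$" from \cite{DLPS} by "minimize the up-set $D(r,s)$" — is the missing idea, and without it (or the Helly repair) your inductive scheme as written does not go through.
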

\begin{proof}
If $\KRS{S}{\cR}$ has the amalgamation property then the proof that $S$ satisfies the four-values condition in $\cR$ follows exactly as in \cite[Proposition 1.6]{DLPS}. The essential idea is to consider Figure \ref{4VCfig}. For the converse, we assume $S$ satisfies the four-values condition in $\cR$, and prove that $\KRS{S}{\cR}$ has the disjoint amalgamation property. Fix $(X_1,d_1)$ and $(X_2,d_2)$ in $\KRS{S}{\cR}$ such that $d_1|_{X_1\cap X_2}=d_2|_{X_1\cap X_2}$ and $X_1\cap X_2\neq\emptyset$. We may assume $X_1\not\seq X_2$ and $X_2\not\seq X_1$. Let $m=|(X_1\backslash X_2)\cup (X_2\backslash X_1)|$ and set $X=X_1\cup X_2$. Then $m\geq 2$ by our assumptions, and we proceed by induction on $m$. 

Suppose $m=2$. Let $X_1\backslash X_2=\{x_1\}$ and $X_2\backslash X_1=\{x_2\}$. Given $t\in S$, let $d_t:X\times X\func S$ be such that $d_t|_{X_1}=d_1$, $d_t|_{X_2}=d_2$, and $d_t(x_1,x_2)=t$. Then $d_t$ is an $\cR$-metric on $X$ if and only if 
\begin{equation*}
\text{$t>0$ and $(t,d_1(x_1,x),d_2(x_2,x))$ is an $\cR$-triangle for all $x\in X_1\cap X_2$.}\tag{$\dagger$}
\end{equation*}
Therefore, it suffices to find $t\in S$ satisfying $(\dagger)$.

Fix $y\in X_1\cap X_2$ such that
$$
d_1(x_1,y) \p d_2(x_2,y) = \min_{x\in X_1\cap X_2} (d_1(x_1,x)\p d_2(x_2,x)).
$$
Given $r,s\in R$, let $D(r,s)=\{x\in R:r\leq s\p x\text{ and }s\leq r\p x\}$. Note that $D(r,s)$ is closed upward in $R$ for any $r,s\in R$. Therefore we may fix $y'\in X_1\cap X_2$ such that 
$$
D(d_1(x_1,y'),d_2(x_2,y'))=\bigcap_{x\in X_1\cap X_2}D(d_1(x_1,x),d_2(x_2,x)).
$$
Then $(d_1(y,y'),d_1(x_1,y),d_1(x_1,y'))$ and $(d_2(y,y'),d_2(x_2,y),d_2(x_2,y'))$ are $\cR$-triangles in $S$. Since $d_1(y,y')=d_2(y,y')$ and $S$ satisfies the four-values condition in $\cR$, there is some $t\in S$ such that $(t,d_1(x_1,y),d_2(x_2,y))$ and $(t,d_1(x_1,y'),d_2(x_2,y'))$ are $\cR$-triangles. If $t=0$ then, after replacing $t$ with $\min\{d_1(x_1,y),d_1(x_1,y')\}$, we may assume $t>0$. Since $(t,d_1(x_1,y),d_2(x_2,y))$ is an $\cR$-triangle, we have
$$
t \leq d_1(x_1,y)\p d_2(x_2,y)= \min_{x\in X_1\cap X_2}(d_1(x_1,x) \p d_2(x_2,x)).
$$
Therefore, to show that $t$ satisfies $(\dagger)$, it remains to show that for all $x\in X_1\cap X_2$, we have $d_1(x_1,x)\leq d_2(x_2,x)\p t$ and $d_2(x_2,x)\leq d_1(x_1,x)\p t$. Since $(t,d_1(x_1,y'),d_2(x_2,y'))$ is an $\cR$-triangle, we have $t\in D(d_1(x_1,y'),d_2(x_2,y'))$. Therefore, by choice of $y'$, we have $t\in D(d_1(x_1,x),d_2(x_2,x))$ for all $x\in X_1\cap X_2$, which yields the desired inequalities. This completes the base case $m=2$. The induction step proceeds exactly as in \cite[Proposition 1.6]{DLPS}.
\end{proof}
 
Using the previous characterization, we proceed as follows. Fix a distance magma $\cR$ and a subset $S\seq R$, with $0\in S$. In order to apply classical \Fraisse\ theory, we assume $S$ is countable, which means $\KRS{S}{\cR}$ is a countable (up to isomorphism) class of $\cL_S$-structures. If we also assume $S$ satisfies the four-values condition in $\cR$ then, altogether, $\KRS{S}{\cR}$ is a \Fraisse\ class and so we may define the \Fraisse\ limit (see \cite[Theorem 7.1.2]{Hobook}).

\begin{definition}
Given a distance magma $\cR$ and a countable subset $S\seq R$, such that $0\in S$ and $S$ satisfies the four-values condition in $\cR$, let $\cU^S_\cR$ denote the \Fraisse\ limit of $\KRS{S}{\cR}$. Let $\cU_\cR$ denote $\cU^R_\cR$.
\end{definition}

We now obtain a countable $\cL_S$-structure $\cU^S_\cR$, and it is clear that $\cU^S_\cR\models \TMS{S}{\cR}$. By Proposition \ref{axModels}, $\cU^S_\cR$ is an $\cS^*$-metric space. However, since the age of $\cU^S_\cR$ is precisely $\KRS{S}{\cR}$, it follows that $\cU^S_\cR$ is an $\cR$-metric space with $\Dist(\cU^S_\cR)=S$. Altogether, given a distance magma $\cR$ and a countable subset $S\seq R$, such that $0\in S$ and $S$ satisfies the four-values condition in $\cR$, we call $\cU^S_\cR$ the \textbf{$\cR$-Urysohn space over $S$}.

We summarize with the following combinatorial description of $\URS{S}{\cR}$.

\begin{theorem}
Fix a distance magma $\cR$ and a countable set $S\seq R$, with $0\in S$. 
\begin{enumerate}[$(a)$]
\item If $S$ satisfies the four-values condition in $\cR$ then $\cU^S_\cR$ is the unique (up to isometry) $\cR$-metric space satisfying the following properties:
\begin{enumerate}[$(i)$]
\item $\cU^S_\cR$ is countable and $\Dist(\cU^S_\cR)=S$;
\item (\textit{ultrahomogeneity}) any isometry between two finite subspaces of $\cU^S_\cR$ extends to a total isometry of $\cU^S_\cR$;
\item (\textit{universality}) any element of $\KRS{S}{\cR}$ is isometric to a subspace of $\cU^S_\cR$.
\end{enumerate}
\item Suppose there is a countable, ultrahomogeneous $\cR$-metric space $\cA$, such that $\Dist(\cA)=S$ and $\cA$ is universal for $\KRS{S}{\cR}$. Then $S$ satisfies the four-values condition in $\cR$ and so $\cA$ is isometric to $\cU^S_\cR$.
\end{enumerate}
\end{theorem}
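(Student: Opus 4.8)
The plan is to reduce everything to classical \Fraisse\ theory combined with Proposition \ref{DLPS1.6}; the only genuinely new work is translating between the relational $\cL_S$-picture and the metric picture. I would first record the dictionary: among $\cR$-metric spaces with distances in $S$, $\cL_S$-embeddings are exactly isometric embeddings and $\cL_S$-isomorphisms are exactly isometries. This holds because for $r<r'$ in $S$ the atomic relation $d(x,y)\leq r$ separates a pair at distance $r$ from one at distance $r'$, so the family of relations $d(x,y)\leq s$ $(s\in S)$ determines distances lying in $S$; a bijection preserving all of them in both directions is therefore distance-preserving. Consequently ``unique up to isometry'' coincides with \Fraisse\ uniqueness, and universality and ultrahomogeneity in the statement are the usual model-theoretic notions. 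A second, common ingredient is an age computation: whenever an $\cR$-metric space has distance set $S$ and is universal for $\KRS{S}{\cR}$, its age equals $\KRS{S}{\cR}$ --- the inclusion $\subseteq$ because every finite subspace has all distances in $S$, and $\supseteq$ by universality.

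For part $(a)$, assume $S$ satisfies the four-values condition. Proposition \ref{DLPS1.6} supplies amalgamation, and $\KRS{S}{\cR}$ is countable (as $S$ is) with the hereditary and joint embedding properties already noted, so it is a \Fraisse\ class and $\cU^S_\cR$ is its \Fraisse\ limit. Its age is $\KRS{S}{\cR}$, which yields universality $(iii)$ at once and $\Dist(\cU^S_\cR)=S$ (every realized distance lies in $S$, and each nonzero $s\in S$ is realized because the two-point space at distance $s$ lies in $\KRS{S}{\cR}$ and embeds), establishing $(i)$; ultrahomogeneity $(ii)$ is the defining property of the limit. For uniqueness, any $\cR$-metric space satisfying $(i)$, $(ii)$, and $(iii)$ has age $\KRS{S}{\cR}$ by the common computation and is countable ultrahomogeneous, so \Fraisse\ uniqueness, read through the dictionary, makes it isometric to $\cU^S_\cR$.

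For part $(b)$, let $\cA$ be as hypothesized. By the common age computation its age is $\KRS{S}{\cR}$. Now I invoke the standard half of \Fraisse's theorem asserting that the age of a countable ultrahomogeneous structure has the amalgamation property: given $B$ and $C$ in the age over a common substructure $A$, embed $B$ and $C$ separately into $\cA$ and use an automorphism supplied by ultrahomogeneity to make the two embeddings agree on $A$, producing an amalgam inside $\cA$. Thus $\KRS{S}{\cR}$ has the amalgamation property, so by Proposition \ref{DLPS1.6} the set $S$ satisfies the four-values condition. Consequently $\cU^S_\cR$ exists and shares the age $\KRS{S}{\cR}$ with $\cA$, and both are countable ultrahomogeneous, so \Fraisse\ uniqueness gives an $\cL_S$-isomorphism, i.e. an isometry, $\cA\cong\cU^S_\cR$.

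I expect no serious obstacle: all the mathematical weight sits in Proposition \ref{DLPS1.6} and in classical \Fraisse\ theory, both available here. The points demanding care are the verification that $\cL_S$-isomorphism coincides with isometry --- which is precisely what makes ``uniqueness up to isometry'' equal to \Fraisse\ uniqueness --- and the bookkeeping of the age in each direction, which is the channel feeding the four-values condition into and out of Proposition \ref{DLPS1.6}.
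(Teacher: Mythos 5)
Your proposal is correct and is exactly the argument the paper intends: the theorem is stated there as a summary of the \Fraisse\ construction, resting on Proposition \ref{DLPS1.6} for amalgamation, the already-noted hereditary and joint embedding properties, and classical \Fraisse\ theory (existence, uniqueness, and the fact that the age of a countable ultrahomogeneous structure has amalgamation). Your explicit verification that $\cL_S$-isomorphisms coincide with isometries for spaces with distances in $S$, and the age computations in both directions, are precisely the bookkeeping the paper leaves implicit.
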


\begin{remark}$~$
\begin{enumerate}
\item Consider the distance monoid $\cQ=(\Q^{\geq0},+,\leq,0)$. Then $\cU_\cQ$ is precisely the classical \textit{rational Urysohn space}, which is an important example in model theory, descriptive set theory, Ramsey theory, and topological dynamics of isometry groups. The completion of the rational Urysohn space is called the \textit{Urysohn space}, and is the universal separable metric space. Both spaces were first constructed by Urysohn in 1925 (see \cite{Ury}, \cite{Ury2}). Further details and results can be found in \cite{MeUS}.
\item In Proposition \ref{DLPS1.6}, there is no restriction on the cardinality of $S$. However, in order to apply classical \Fraisse\ theory and construct a countable space $\URS{S}{\cR}$, we must assume $S$ is countable. In \cite{Sa13}, Sauer considers arbitrary subsets $S\seq\R^{\geq0}$ and, combining the four-values condition with certain topological properties, characterizes the existence of a universal separable complete metric space with distances in $S$ (e.g. if $S=\R^{\geq0}$ then this produces the Urysohn space).
\end{enumerate}
\end{remark}

Note that if $S\seq R$ is countable and sum-complete then $\cK_\cS=\KRS{S}{\cR}$ and $\cU_\cS=\URS{S}{\cR}$. In this case, we have the following nice characterization of when $\cU_\cS$ exists. This result was first shown for (topologically) closed subsets of $(\R^{\geq0},+,\leq,0)$ by Sauer in \cite[Theorem 5]{Sa13b}, and the following is, once again, a direct generalization. 

\begin{proposition}\label{4VCassoc}
Suppose $S\seq R$ is sum-complete. Then $S$ satisfies the four-values condition in $\cR$ if and only if $\ps$ is associative on $S$.
\end{proposition}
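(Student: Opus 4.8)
The plan is to recast the four-values condition using the interval description of the triangle relation in $\cS^*$, and then treat the two implications separately, the forward one being the substantial direction. First I would move everything into $\cS^*$. By Proposition \ref{coincides} we have $r\cps s=r\ps s\in S$ for all $r,s\in S$, and by Proposition \ref{prop:expl} the $\cS^*$-triangles are exactly the logical $S^*$-triangles; hence for $r,s,t\in S$ the notions of $\cR$-triangle, $\cS$-triangle, and $\cS^*$-triangle all coincide. Combining Theorem \ref{S*complete} with Corollary \ref{Delta*}, the admissible third sides of a triangle with two sides $\alpha,\beta$ form the interval $\Sigma(\alpha,\beta)=\{\gamma\in S^*:|\alpha\cms\beta|\sleq\gamma\sleq\alpha\cps\beta\}$. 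In these terms the four-values condition says: whenever $u_1,u_2,v_1,v_2\in S$ admit a common $s\in S$ with $s\in\Sigma(u_1,u_2)\cap\Sigma(v_1,v_2)$, there is a common $t\in S$ with $t\in\Sigma(u_1,v_1)\cap\Sigma(u_2,v_2)$.

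For the direction that associativity of $\ps$ implies the four-values condition, I would propose the explicit witness $t=\min\{u_1\ps v_1,\,u_2\ps v_2\}$, which lies in $S$ by Proposition \ref{coincides}. The upper constraints $t\sleq u_1\cps v_1$ and $t\sleq u_2\cps v_2$ are immediate, so membership of $t$ in both intervals reduces to the two cross-inequalities $|u_1\cms v_1|\sleq u_2\cps v_2$ and $|u_2\cms v_2|\sleq u_1\cps v_1$. Using Proposition \ref{compProps}$(b)$ (legitimate since $\cS^*$ is difference-complete), the first is equivalent to $u_1\sleq v_1\cps(u_2\cps v_2)$ together with $v_1\sleq u_1\cps(u_2\cps v_2)$. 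From $s\in\Sigma(u_1,u_2)\cap\Sigma(v_1,v_2)$ I extract the triangle inequalities $u_1\sleq u_2\cps s$, $v_1\sleq v_2\cps s$ and the bounds $s\sleq u_1\cps u_2$, $s\sleq v_1\cps v_2$; then associativity of $\cps$ on $S^*$ (Proposition \ref{associative*}) yields, for instance, $u_1\sleq u_2\cps s\sleq u_2\cps(v_1\cps v_2)=v_1\cps(u_2\cps v_2)$, and the remaining three inequalities follow symmetrically. Associativity is exactly what lets the common lower vertex $s$ be absorbed into the products coming from the opposite pair, and I expect this verification to be the main obstacle: neither the correct candidate $t$ nor the reduction to the cross-inequalities is obvious a priori.

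For the converse I would argue by contraposition, producing an explicit failure of the four-values condition from a failure of associativity. Given $a,b,c\in S$ with $(a\ps b)\ps c\neq a\ps(b\ps c)$, commutativity of $\ps$ lets me relabel so that $(a\ps b)\ps c\sle a\ps(b\ps c)$. I then set $u_1=a$, $u_2=a\ps(b\ps c)$, $v_1=b$, $v_2=c$, and $s=b\ps c$. A routine check shows $(s,u_1,u_2)$ and $(s,v_1,v_2)$ are $\cR$-triangles in $S$ (the first because $a\ps(b\ps c)$ is by definition the largest side opposite $a$ and $b\ps c$), so the hypothesis of the four-values condition holds. However, any admissible $t$ would have to satisfy $t\sleq u_1\cps v_1=a\ps b$ from the triangle $(t,a,b)$ and $a\ps(b\ps c)\sleq t\cps c$ from the triangle $(t,a\ps(b\ps c),c)$; monotonicity of $\cps$ then forces $a\ps(b\ps c)\sleq t\cps c\sleq(a\ps b)\cps c=(a\ps b)\ps c$, contradicting $(a\ps b)\ps c\sle a\ps(b\ps c)$. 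Thus no such $t$ exists, even in $S^*$, and the four-values condition fails. This direction is a short explicit computation once the witness is written down.
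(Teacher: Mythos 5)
Your proof is correct, and its combinatorial core coincides with the paper's, but you reach it by a genuinely different route, so a comparison is worthwhile. For the direction ``associativity implies four-values,'' your witness $t=\min\{u_1\ps v_1,\,u_2\ps v_2\}$ is the paper's witness in disguise: the paper normalizes $u_1\p v_1\leq u_2\p v_2$ and takes $t=u_1\ps v_1$, then checks directly that $(t,u_2,v_2)$ is an $\cR$-triangle via $v_2\leq s\ps v_1\leq(u_2\ps u_1)\ps v_1=u_2\ps(u_1\ps v_1)$ and its mirror image --- exactly the inequalities your cross-conditions $|u_1\cms v_1|\sleq u_2\cps v_2$ and $|u_2\cms v_2|\sleq u_1\cps v_1$ unwind to through Proposition \ref{compProps}$(b)$ (your ``immediate'' reduction also tacitly needs the within-pair bounds $|u_i\cms v_i|\sleq u_i\cps v_i$, which is Proposition \ref{compProps}$(c)$; harmless). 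For the converse, the paper argues directly rather than by contraposition: it applies the four-values condition to the triangles $(r\ps s,r,s)$ and $(r\ps s,(r\ps s)\ps t,t)$ and reads off $(r\ps s)\ps t\leq r\ps(s\ps t)$; after relabeling, this is precisely your hypothesis configuration (common side $b\ps c$, pairs $(b,c)$ and $(a,a\ps(b\ps c))$), with the complementary within-pair labeling in the conclusion, so the two arguments are logically interchangeable. What your packaging buys is conceptual: by moving to $\cS^*$, which is always difference-complete (Theorem \ref{S*complete}), you can phrase the four-values condition as an interval-intersection problem via Corollary \ref{Delta*} and freely use the generalized difference --- sidestepping the fact that $\cR$ itself need not be difference-complete, a point the paper has to handle by hand elsewhere (compare the adjustments in Proposition \ref{DLPS1.6}). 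The cost is heavier machinery (Theorem \ref{S*complete}, Corollary \ref{Delta*}, Proposition \ref{prop:expl}, and Proposition \ref{associative*} to lift associativity to $S^*$ --- although, since every quantity appearing after you apply Proposition \ref{compProps}$(b)$ lies in $S$, associativity of $\ps$ on $S$ already suffices for your manipulations), whereas the paper's verification lives entirely inside $(S,\ps)$ and uses nothing beyond sum-completeness and the definition of an $\cR$-triangle.
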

\begin{proof}
Suppose $S$ satisfies the four-values condition in $\cR$, and fix $r,s,t\in S$. Since $\ps$ is commutative, it suffices to show $(r\ps s)\ps t\leq r\ps (s\ps t)$. Let $u=(r\ps s)\ps t$. Then $(r\p_S s,r,s)$ and $(r\p_S s,u,t)$ are both $\cR$-triangles. By the four-values condition, there is $v\in S$ such that $(v,r,u)$ and $(v,s,t)$ are $\cR$-triangles. Therefore $u\leq r\ps v\leq r\ps (s\ps t)$, as desired.

Conversely, assume $\ps$ is associative on $S$. Fix $u_1,u_2,v_1,v_2,s\in S$ such that $(s,u_1,u_2)$ and $(s,v_1,v_2)$ are $\cR$-triangles. Without loss of generality, assume $u_1\p v_1\leq u_2\p v_2$. Let $t=u_1\ps v_1$. Then $(t,u_1,v_1)$ is clearly an $\cR$-triangle, so it suffices to show that $(t,u_2,v_2)$ is an $\cR$-triangle. We have $t\leq u_1\p v_1\leq u_2\p v_2$ by assumption, so it remains to show $v_2\leq u_2\p t$ and $u_2\leq v_2\p t$. Note that $s\leq u_2\ps u_1$ and $v_2\leq s\ps v_1$ since $(s,u_1,u_2)$ and $(s,v_1,v_2)$ are $\cR$-triangles. Therefore $v_2\leq s\ps v_1\leq (u_2\ps u_1)\ps v_1=u_2\ps (u_1\ps v_1)\leq u_2\p t$.
The argument for $u_2\leq v_2\p t$ is similar.
\end{proof}

In particular, this says that if $\cR$ is a distance magma, then $R$ satisfies the four-values condition in $\cR$ if and only if $\cR$ is a distance monoid. By Corollary \ref{assoc*cor} and Proposition \ref{4VCassoc}, we also obtain the following corollary.

\begin{corollary}
If $S\seq R$ is sum-complete and satisfies the four-values condition in $\cR$, then $\cS^*$ is a distance monoid.
\end{corollary}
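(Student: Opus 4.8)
The plan is to chain together the two results the corollary explicitly cites. First I would recall that the hypothesis ``$S$ is sum-complete and satisfies the four-values condition in $\cR$'' is exactly the left-hand side of Proposition \ref{4VCassoc}, whose conclusion is that $\ps$ is associative on $S$; equivalently, the $\LDS$-structure $\cS=(S,\ps,\leq,0)$ is a distance monoid (it is a distance magma by Proposition \ref{coincides}, and associativity of $\ps$ is precisely the remaining axiom $(vi)$). So the first step simply converts the four-values hypothesis into the statement that $\cS$ is a distance monoid.

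Second, I would invoke Corollary \ref{assoc*cor}, which asserts that for sum-complete $S$, the structure $\cS^*$ is a distance monoid if and only if $\cS$ is a distance monoid. Since the previous step establishes that $\cS$ is a distance monoid, the forward direction of this equivalence yields that $\cS^*$ is a distance monoid, which is the desired conclusion. The entire argument is therefore a two-line citation:

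\begin{proof}
By Proposition \ref{4VCassoc}, the hypotheses imply that $\ps$ is associative on $S$, i.e. $\cS$ is a distance monoid. Hence $\cS^*$ is a distance monoid by Corollary \ref{assoc*cor}.
\end{proof}

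There is no genuine obstacle here, since both ingredients are already proved in the excerpt; the only thing to be careful about is not conflating the three operations in play. One must keep straight that $\ps$ is the sum-complete operation on $S$ (defined via $\max$), that $\cps$ is the operation on $S^*$, and that Proposition \ref{coincides} guarantees $r\cps s = r\ps s$ for $r,s\in S$ precisely when $S$ is sum-complete. This coincidence is what makes ``associativity of $\ps$ on $S$'' the same condition as the hypothesis ``$r\cps(s\cps t)=(r\cps s)\cps t$ for all $r,s,t\in S$'' appearing in Proposition \ref{associative*}, which in turn is the engine behind Corollary \ref{assoc*cor}. The corollary thus closes the loop: the finitary combinatorial four-values condition on $S$ propagates all the way up to associativity of $\cps$ on the (possibly much larger) completion $S^*$.
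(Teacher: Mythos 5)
Your proof is correct and is exactly the paper's argument: the corollary is stated there as an immediate consequence of Proposition \ref{4VCassoc} (four-values plus sum-completeness gives associativity of $\ps$, so $\cS$ is a distance monoid) combined with Corollary \ref{assoc*cor} (which transfers associativity to $\cS^*$). Your side remarks on keeping $\ps$ and $\cps$ straight via Proposition \ref{coincides} are accurate and match the role Proposition \ref{associative*} plays behind the scenes.
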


\begin{example}
We show that, in the previous corollary, the sum-completeness of $S$ is necessary. Let $\cR=(\R^{\geq0},+,\leq,0)$ and $S=[0,2)\cup(4,\infty)$. Note that $S$ is not sum-complete, witnessed by $\{x\in S:x\leq 1\p 1\}$. The reader may verify that $S$ satisfies the four-values condition in $\cR$.
On the other hand, $+^*_S$ is not associative on $S^*$. Indeed, if $X$ is the gap cut $(4,\infty)$ then, by Lemma \ref{lem:explSum}, $(1+^*_S 1)+^*_S g_X=g_X+^*_S g_X=8^+$ and $1+^*_S(1+^*_S g_X)=1+^*_S 5^+=6^+$.
\end{example}

Finally, it is worth noting that if $\cR$ is a countable distance \emph{monoid} then there is a more direct way to demonstrate that $\cK_\cR$ is a \Fraisse\ class. In particular, to prove $\cK_\cR$ has the amalgamation property, one may use the natural generalization of the notion of ``free amalgamation of metric spaces." 

\begin{definition} Let $\cR$ be a distance magma.
\begin{enumerate}
\item Suppose $\cA=(A,d_A)$ and $\cB=(B,d_B)$ are finite $\cR$-metric spaces such that $A\cap B\neq\emptyset$ and $d_A|_{A\cap B}=d_B|_{A\cap B}$. Define the $\cR$-colored space $\cA\otimes\cB=(C,d_C)$ where $C=A\cup B$ and
$$
d_C(x,y)=\begin{cases}
d_A(x,y) & \text{if $x,y\in A$}\\
d_B(x,y) & \text{if $x,y\in B$}\\
\displaystyle\min_{z\in A\cap B}(d_A(x,z)\p d_B(z,y)) & \text{if $x\in A\backslash B$ and $y\in B\backslash A$.}
\end{cases}
$$
\item $\cR$ \textbf{admits free amalgamation of metric spaces} if $\cA\otimes\cB$ is an $\cR$-metric space for all finite $\cR$-metric spaces $\cA$ and $\cB$. 
\end{enumerate}
\end{definition}

\begin{exercise}\label{freeAmal}
Let $\cR$ be a distance magma. Then $\cR$ admits free amalgamation of metric spaces if and only if $\p$ is associative.
\end{exercise}

\section{Quantifier Elimination in Generalized Urysohn Spaces}\label{sec:QE}

In this section, we consider quantifier elimination in the theory of a generalized Urysohn space of the kind constructed in Section \ref{sec:4VC}. The setup is as follows. We have a distance magma $\cR=(R,\p,\leq,0)$ and a countable subset $S\seq R$, such that $0\in S$ and $S$ satisfies the four-values condition in $\cR$. We will also assume $S$ is sum-complete. The reason for this is that Lemma \ref{extAx}, which is a key tool in this section, crucially relies on the existence of an associative binary operation on the set of distances. Altogether, in light of Remark \ref{simplify}(1), we can encompass this general situation by simply fixing a \emph{countable distance monoid} $\cR=(R,\p,\leq,0)$ and choosing $S=R$. 

Let $\TUS{\cR}$ denote the complete $\cL_R$-theory of  the $\cR$-Urysohn space $\cU_\cR$ (which exists by Proposition \ref{4VCassoc}), and let $d$ denote $d_{\cU_\cR}$. Then $\cR^*=(R^*,\cp_R,\leq^*,0)$ is a difference-complete distance monoid, with generalized difference operation $|\alpha \cm_R\beta|$ (see Definition \ref{def:minus}). We continue to consider $\cR$ as an $\LDS$-substructure of $\cR^*$. Therefore, to ease notation, we will omit the extra decorations on the symbols in $\LDS$, and let $\cR^*=(R^*,\p,\leq,0)$. We also use $|\alpha \m\beta|$ to denote $|\alpha\cm_R\beta|$. It is worth observing that, while $\cR^*$ is difference-complete by Theorem \ref{S*complete}, $\cR$ itself may not be difference-complete and so $R$ may not be closed under $|r\m s|$.

The next claim follows by universality of $\cU_\cR$, Theorem \ref{metCor}, and compactness.

\begin{proposition}\label{saturated2}
Any $\cR^*$-metric space is isometric to a subspace of some model of $\TUS{\cR}$.
\end{proposition}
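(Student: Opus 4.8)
The plan is to realize $\cA = (A, d_A)$ inside a model of $\TUS{\cR}$ by a compactness argument, using approximability to witness each finite fragment inside $\cU_\cR$ itself. Expand $\cL_R$ by a fresh constant $c_a$ for every $a \in A$, and consider the theory
\[
T = \TUS{\cR} \cup \bigcup_{a, b \in A} p_{d_A(a,b)}(c_a, c_b),
\]
where $p_\alpha(x,y)$ is the quantifier-free $2$-type attached to $\alpha \in R^*$. Since $\cA$ is an $\cR^*$-metric space, distinct points have nonzero distance, so membership of $d(c_a,c_b) > 0$ in $p_{d_A(a,b)}$ already forces the $c_a$ to be pairwise distinct; and by Proposition \ref{prop:2types}, any model $M$ of $T$ assigns to each pair $(c_a^M, c_b^M)$ precisely the $\cR^*$-distance $d_A(a,b)$. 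Thus any $M \models T$ is in particular a model of $\TMS{R}{\cR}$, hence $\cR^*$-colorable and an $\cR^*$-metric space by Proposition \ref{axModels}, into which $a \mapsto c_a^M$ embeds $\cA$ isometrically. It therefore suffices to prove that $T$ is consistent.

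By compactness, fix a finite $T_0 \seq T$. It mentions only finitely many constants, say those indexed by a finite $A_0 \seq A$, and for each pair from $A_0$ finitely many formulas of the form $d(c_a, c_b) \leq s$ or $d(c_a, c_b) > s$ with $s \in R$. For each value $\alpha \in \Dist(A_0, d_A)$, intersecting over all pairs of $\cR^*$-distance $\alpha$ the intervals cut out by these formulas yields an interval in $\Int^*(R)$ containing $\alpha$; this defines an $R$-approximation $\Phi$ of $\Dist(A_0, d_A)$ with the property that any realization of $d(c_a,c_b)$ inside $\Phi(d_A(a,b)) \cap R$ satisfies every constraint of $T_0$. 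Because $\cA$ is an $\cR^*$-metric space, Theorem \ref{metCor} supplies an $\cR$-metric $d_\Phi$ on $A_0$ with $d_\Phi(a,b) \in \Phi(d_A(a,b)) \cap R$ for all $a, b \in A_0$. Then $(A_0, d_\Phi) \in \cK_\cR$, so by universality of $\cU_\cR$ it embeds isometrically into $\cU_\cR$; interpreting each $c_a$ as the image of $a$ makes $\cU_\cR \models T_0$, since $d_{\cU_\cR}(c_a, c_b) = d_\Phi(a,b)$ lies in the required interval. Hence every finite subset of $T$ is satisfiable, and $T$ is consistent.

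The only routine content is the bookkeeping in the second paragraph: one must check that the finitely many chosen inequalities genuinely assemble into a single well-defined $R$-approximation of the \emph{set} $\Dist(A_0, d_A)$ rather than a constraint indexed by pairs, which is exactly where repeated distance values across different pairs are reconciled by intersecting intervals. The conceptual heart, and the only nontrivial input, is Theorem \ref{metCor}: it is precisely the assertion that an abstract $\cR^*$-metric space is finitely approximated by honest $\cR$-metrics with distances in $R$, and this is what allows universality of $\cU_\cR$ to take over. No further obstacle arises, since $\cR^*$-colorability of models of $\TUS{\cR}$ and the recovery of the isometric embedding are immediate from Propositions \ref{prop:2types} and \ref{axModels}.
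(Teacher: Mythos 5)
Your proof is correct and follows exactly the route the paper intends: its proof of Proposition \ref{saturated2} is the one-line remark that the claim ``follows by universality of $\cU_\cR$, Theorem \ref{metCor}, and compactness,'' and your argument is precisely the careful expansion of that sketch. The bookkeeping you flag --- reconciling pair-indexed constraints into a single $R$-approximation of the distance \emph{set} so that Theorem \ref{metCor} applies --- is handled correctly.
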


The goal of this section is Theorem \ref{prethm2}, a characterization of quantifier elimination for $\TUS{\cR}$. The proof will rely on \textit{extension axioms}, i.e. $\cL_R$-sentences approximating one-point extensions of finite $\cR^*$-metric spaces. We begin with several definitions in this direction.

\begin{definition}
Fix an $\cR^*$-metric space $\cA=(A,d_A)$. 
\begin{enumerate}
\item A function $f:A\func R^*$ is an \textbf{$\cR^*$-Kat\v{e}tov map on $\cA$} if, for all $x,y\in A$, $(d_A(x,y),f(x),f(y))$ is an $\cR^*$-triangle.
\item Let $E_{\cR^*}(\cA)$ be the set of $\cR^*$-Kat\v{e}tov maps on $\cA$.
\end{enumerate}
\end{definition}

\begin{remark} Note that the definition of Kat\v{e}tov map makes sense in the context of an arbitrary distance magma. These maps take their name from \cite{Kat}, in which Kat\v{e}tov uses them to construct the Urysohn space, as well as similar metric spaces in larger cardinalities. See \cite{MeUS} for more on Kat\v{e}tov maps in the classical setting, including an analysis of $E_\cR(\cA)$ as a topological space. 

Kat\v{e}tov maps have a natural model theoretic characterization as quantifier-free $1$-types. In particular, if $\cA$ is an $\cR^*$-metric space then, by Proposition \ref{saturated2}, we may fix $M\models \Th(\cU_\cR)$ such that $\cA$ is a subspace of $(M,d_M)$. Let $S^{\text{qf}}_1(A)$ be the space of quantifier-free $1$-types over the parameter set $A$. Given $f\in E_{\cR^*}(\cA)$, define $q_f(x)=\bigcup_{a\in A}p_{f(a)}(x,a)$. Conversely, given $q(x)\in S_1^{\text{qf}}(A)$, let $f_q:A\func R^*$ such that $p_{f_q(a)}(x,a)\seq q(x)$. Then one may verify that $f\mapsto q_f$ is a well-defined bijection from $E_{\cR^*}(\cA)$ to $S_1^{\text{qf}}(A)$, with inverse $q\mapsto f_q$.
\end{remark}

Going forward, we will only consider \textit{non-principal} Kat\v{e}tov maps, i.e. those not containing $0$ in their image. 

\begin{definition}
Fix an $\cR^*$-metric space $\cA=(A,d_A)$.
\begin{enumerate}
\item Let $E^+_{\cR^*}(\cA)=\{f\in E_{\cR^*}(\cA):f(a)>0\text{ for all $a\in A$}\}$.
\item Given $f\in E^+_{\cR^*}(\cA)$, define an $\cR^*$-metric space $\cA^f=(A^f,d_A)$ where $A^f=A\cup\{z_f\}$, with $z_f\not\in A$, and, for all $x\in A$, $d_A(x,z_f)=f(x)=d_A(z_f,x)$.
\end{enumerate}
\end{definition}

Next, we give a variation of the notion of $R$-approximation, which will simplify some steps of the arguments in this section.  Recall that $\Int^*(R)$ is the set of intervals in $\cR^*$ of the form $\{0\}$, $(r,s]$ for some $r,s\in R$, or $(r,\omega_R]$ where $r\in R$ and $\omega_R$ is the maximal element of $R^*$.

\begin{definition}
Fix a finite $\cR^*$-metric space $\cA=(A,d_A)$.
\begin{enumerate}
\item A symmetric function $\Phi:A\times A\func \Int^*(R)$ is an \textbf{$R$-approximation} of $\cA$ if $d_A(a,b)\in\Phi(a,b)$ for all $a,b\in A$. We write $\Phi(a,b)=(\Phi^-(a,b),\Phi^+(a,b)]$.
\item Given an $R$-approximation $\Phi$ of $\cA$ and $\alpha\in\Dist(\cA)$, define
\begin{align*}
\hat{\Phi}^-(\alpha) &= \max\{\Phi^-(a,b):d_A(a,b)=\alpha\}\mand\\
\hat{\Phi}^+(\alpha) &= \min\{\Phi^+(a,b):d_A(a,b)=\alpha\}.
\end{align*}
Let $\hat{\Phi}(\alpha)=(\hat{\Phi}^-(\alpha),\hat{\Phi}^+(\alpha)]$, and note that $\hat{\Phi}$ is an $R$-approximation of $\Dist(\cA)$ in the sense of Definition \ref{def:Sapprox} and Notation \ref{not:Sapprox}.
\item Given $f\in E^+_{\cR^*}(\cA)$, if $\Phi$ is an $R$-approximation of $\cA^f$ and $x\in A$, then we let $\Phi(x)=\Phi(x,z_f)$ and write $\Phi(x)=(\Phi^-(x),\Phi^+(x)]$.
\end{enumerate}
\end{definition}

\begin{definition}$~$
\begin{enumerate}
\item Given $I\in \Int^*(R)$, define the $\cL_R$-formula
$$
d(x,y)\in I:=\begin{cases}
r<d(x,y)\leq s & \text{if $I=(r,s]$ and $s\in R$}\\
d(x,y)>r & \text{if $I=(r,\omega_R]$ and $\omega_R\not\in R$}\\
x=y & \text{if $I=\{0\}$.}
\end{cases}
$$

\item Fix a finite $\cR^*$-metric space $\cA$ and $f\in E^+_{\cR^*}(\cA)$. Suppose $\Phi$ is an $R$-approximation of $\cA^f$. Let $A=\{a_1,\ldots,a_n\}$, and fix a tuple $\xbar=(x_1,\ldots,x_n)$ of variables. Define the $\cL_R$-formulas
\begin{align*}
C^\Phi_\cA(\xbar) &:= \bigwedge_{1\leq i,j\leq n} d(x_i,x_j)\in \Phi(a_i,a_j),\\
K^\Phi_\cA(\xbar,y) &:= \bigwedge_{1\leq i\leq n} d(x_i,y)\in\Phi(a_i),\mand\\
\epsilon^\Phi_\cA &:= \forall x_1\ldots x_n\bigg(C^\Phi_\cA(\xbar)\rightarrow\exists y K^\Phi_\cA(\xbar,y)\bigg).
\end{align*}
\end{enumerate}
\end{definition}

The sentences $\epsilon^\Phi_\cA$ should be viewed as extension axioms approximating Kat\v{e}tov maps. Informally, $\cU_\cR$ satisfies $\epsilon^\Phi_\cA$ if, for any $x_1,\ldots,x_n$ in $\cU_\cR$, if $\{x_1,\ldots,x_n\}$ is approximately isometric to $\cA$ then there is some $y$ in $\cU_\cR$ such that $\{x_1,\ldots,x_n,y\}$ is approximately isometric to $\cA^f$, where in both cases ``approximately isometric" is determined by $\Phi$. Of course, if $\Phi$ is a poor approximation of $\cA^f$ then there is no reason to expect $\cU_\cR\models\epsilon^\Phi_\cA$. This observation motivates our final definition. 

\begin{definition}$~$
\begin{enumerate}
\item An \textbf{extension scheme} is a triple $(\cA,f,\Psi)$, where $\cA$ is a finite $\cR^*$-metric space, $f\in E^+_{\cR^*}(\cA)$, and $\Psi$ is an $R$-approximation of $\cA^f$.
\item $\TUS{\cR}$ \textbf{admits extension axioms} if, for all extension schemes $(\cA,f,\Psi)$, there is an $R$-approximation $\Phi$ of $\cA^f$ such that $\Phi$ refines $\Psi$ and $\cU_\cR\models\epsilon^\Phi_\cA$. 
\end{enumerate}
\end{definition}

To avoid inconsequential complications when $\omega_R\not\in R$, we make the following reduction. Call an extension scheme $(\cA,f,\Psi)$ \textbf{standard} if $\Psi^+(A^f\times A^f)\seq R$.

\begin{proposition}\label{bddRed}
$\TUS{\cR}$ admits extension axioms if and only if, for all standard extension schemes $(\cA,f,\Psi)$, there is an $R$-approximation $\Phi$ of $\cA^f$ such that $\Phi$ refines $\Psi$ and $\cU_\cR\models\epsilon^\Phi_\cA$.
\end{proposition}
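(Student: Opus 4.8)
The forward direction is immediate, as every standard extension scheme is an extension scheme. The plan is therefore to prove the converse: granting the stated property for all standard schemes, I would show that an arbitrary extension scheme $(\cA,f,\Psi)$ admits an $R$-approximation $\Phi$ of $\cA^f$ refining $\Psi$ with $\cU_\cR\models\epsilon^\Phi_\cA$. The only way a scheme can fail to be standard is that some interval $\Psi(a,b)$ has the form $(r,\omega_R]$, which forces $\omega_R\notin R$; so I would first dispose of the case $\omega_R\in R$, in which every scheme is already standard, and then assume $\omega_R\notin R$.

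First I would isolate the genuinely problematic pairs. If $\Psi(a,b)=(r,\omega_R]$ while the true distance $d_{\cA^f}(a,b)$ is still below $\omega_R$, the interval is only a loose approximation and may be shrunk to a bounded interval containing $d_{\cA^f}(a,b)$ without altering $\cA^f$; this already returns a standard scheme. The essential case is when $\cA^f$ actually realizes the top distance $\omega_R$. Here I would pass to a bounded model by replacing every $\omega_R$-valued distance of $\cA^f$ with a single $t\in R$ chosen above all of its finitely many non-$\omega_R$ distances, producing a finite space $\cA'^{f'}$ on the same underlying set. The key algebraic point is that this preserves the metric: since $\omega_R$ is \emph{prime} for $\p$, in the sense that $\alpha\p\beta=\omega_R$ implies $\alpha=\omega_R$ or $\beta=\omega_R$ (immediate from the formula $\alpha\p\beta=\inf\{r\p s: r,s\in R,\ \alpha\leq r,\ \beta\leq s\}$ of Lemma~\ref{lem:explSum}$(c)$ together with density of $R$ in $R^*$), the $\omega_R$-distances in any triangle occur at least in pairs, so substituting the large value $t$ keeps all three triangle inequalities valid. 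Replacing the matching endpoints of $\Psi$ by $t$ gives a standard scheme $(\cA',f',\Psi')$ with $\Psi'$ refining $\Psi$; applying the hypothesis yields $\Phi'$ refining $\Psi'$ with $\cU_\cR\models\epsilon^{\Phi'}_{\cA'}$. I would then define $\Phi$ to equal $\Phi'$ except on the pairs whose true distance is $\omega_R$, where I restore the unbounded interval $(\Phi'^-(a,b),\omega_R]$. One checks routinely that $\Phi$ refines $\Psi$ and is an $R$-approximation of $\cA^f$.

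The hard part will be verifying $\cU_\cR\models\epsilon^\Phi_\cA$, which does \emph{not} follow formally from $\cU_\cR\models\epsilon^{\Phi'}_{\cA'}$: a tuple $\bar x$ realizing $C^\Phi_\cA$ in $\cU_\cR$ may place an $\omega_R$-pair at distance far larger than $t$, so it need not realize the bounded hypothesis $C^{\Phi'}_{\cA'}$, and the standard axiom cannot be quoted directly. To produce the required point, I would use the one-point extension property of the \Fraisse\ limit $\cU_\cR$ to reduce the problem to building an $\cR$-Kat\v{e}tov map $h$ on $\{x_1,\dots,x_n\}$ with $h(x_i)\in\Phi(a_i)\cap R$. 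I would split the indices into \emph{finite} ones (where $f(a_i)<\omega_R$) and \emph{far} ones (where $f(a_i)=\omega_R$). Primeness of $\omega_R$ shows that any two finite indices lie at a genuinely bounded distance, so the finite core does satisfy the bounded hypothesis and the standard axiom supplies consistent values there; the far indices, whose distances to the new point are unconstrained above, are then handled by choosing $h$-values large enough to dominate the oversized $\omega_R$-pair distances, the feasibility of each choice being governed by the interval description $\Sigma(\alpha,\beta)=[\,|\alpha\m\beta|,\ \alpha\p\beta\,]$ of Corollary~\ref{Delta*}. The main technical obstacle is exactly this last coordination: arranging the bounded core and the large far-distances to be met simultaneously, so that the resulting $h$ is Kat\v{e}tov for the original, unbounded tuple.
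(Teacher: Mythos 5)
Your overall skeleton matches the paper's proof up to a point: both trivial directions, reduction to the case $\omega_R\not\in R$, the observation that $\alpha\p\beta<\omega_R$ whenever $\alpha,\beta<\omega_R$ (your ``primeness,'' stated in the paper as a Claim proved by density of $R$), and the split of $A$ into the bounded core $A_0=\{a\in A:f(a)<\omega_R\}$ and the far points. You also correctly diagnose that the naive reduction fails, since a tuple realizing $C^\Phi_\cA$ need not realize any bounded hypothesis. But exactly where the proof must be won, you stop: you declare the coordination of the bounded core with the large far-distances to be ``the main technical obstacle'' without resolving it, so the proposal has a genuine gap rather than being a complete alternative route. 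Moreover, your detour through the globally modified space $\cA'$ (replacing every $\omega_R$-distance by a large $t\in R$ and applying the hypothesis to that standard scheme) supplies the wrong data: $\epsilon^{\Phi'}_{\cA'}$ only speaks about tuples whose $\omega_R$-pairs lie in bounded intervals, so it says nothing about the tuples your axiom $\epsilon^\Phi_\cA$ must handle, and the lower endpoints $\Phi'^-(a_i,z_{f'})$ it produces on far pairs bear no controlled relation to the lower endpoints $\Phi'^-(a_i,a_j)$ on pairs joining a far point to the core — which is precisely the relation the verification needs.

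The paper's resolution consists of two ideas missing from your sketch. First, the hypothesis is applied not to a modified full scheme but only to the restricted core scheme $(\cA_0,f_0,\Psi_0)$, which is standard by the Claim; the case $A_0=\emptyset$ is handled separately by a direct universality/homogeneity argument (note that your route cannot even quote the hypothesis there, since metric spaces in this paper are nonempty). Second — the key design decision — on \emph{every} pair at distance $\omega_R$, including the far pairs $(a_i,z_f)$, the approximation $\Phi$ is defined to be the single common interval $\hat{\Psi}(\omega_R)$, so all such pairs share one lower endpoint $\hat{\Psi}^-(\omega_R)$. The witness $c$ for the core is then extended to the whole tuple by free amalgamation of $c\bbar_0$ with $\bbar$ over $\bbar_0$ (Exercise \ref{freeAmal}, where associativity of $\cR$ enters), yielding for each far index $i$ some core index $j$ with $d(b_i,c)\geq d(b_i,b_j)>\Phi^-(a_i,a_j)=\hat{\Psi}^-(\omega_R)=\Phi^-(a_i)$; the synchronization of lower endpoints is exactly what closes this chain of inequalities. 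Your proposed ``choose $h$-values large enough'' is in effect the free-amalgamation formula $h(b_i)=\min_j\left(d(b_i,b_j)\p h(b_j)\right)$, but without synchronizing the endpoints you cannot guarantee $h(b_i)>\Phi^-(a_i)$, so the missing step is substantive, not merely expository.
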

\begin{proof}
The forward direction is trivial. If $\omega_R\in R$ then every extension scheme is standard, and so the reverse direction is also trivial. Assume $\omega_R\not\in R$.

\noindent\textit{Claim}: If $\alpha,\beta\in R^*$ and $\max\{\alpha,\beta\}<\omega_R$, then $\alpha\p\beta<\omega_R$.

\noindent\textit{Proof}: By density of $R$, there are $r,s\in R$ such that $\alpha\leq r<\omega_R$ and $\beta\leq s<\omega_R$. So $\alpha\p\beta\leq r\p s<\omega_R$.\claim

Fix an extension scheme $(\cA,f,\Psi)$. Let $A_0=\{a\in A:f(a)< \omega_R\}$. Suppose $A_0=\emptyset$. We show $\cU_\cR\models\epsilon^\Psi_\cA$. Indeed, if $\cU_\cR\models C^\Psi_\cA(\bbar)$ and $s\in R$ is such that $d(b_i,b_j)\leq s$ for all $b_i,b_j\in\bbar$, then, by universality and homogeneity, there is some $c\in\cU_\cR$ such that $d(b_i,c)=s$ for all $b_i\in\bbar$. If, moreover, $\max\{\Psi^-(a):a\in A\}<s$, then $\cU_\cR\models K^\Psi_\cA(\bbar,c)$. So we may assume $A_0\neq\emptyset$.

Set $f_0=f|_{A_0}$, $\cA_0=(A_0,d_A)$, and $\Psi_0=\Psi|_{A_0^{f_0}\times A_0^{f_0}}$. From the claim, it follows that $d_A(a,b)<\omega_R$ for all $a,b\in A_0$, and so we may assume $(\cA_0,f_0,\Psi_0)$ is a standard extension scheme. By assumption, there is an $R$-approximation $\Phi_0$ of $\cA_0^{f_0}$ such that $\Phi_0$ refines $\Psi_0$ and $\cU_\cR\models\epsilon^{\Phi_0}_{\cA_0}$. We define an $R$-approximation $\Phi$ of $\cA^f$ such that, given $a,b\in A^f$,
$$
\Phi(a,b)=\begin{cases}
\Phi_0(a,b) & \text{if $a,b\in A_0\cup\{z_f\}$}\\
\hat{\Psi}(d_A(a,b)) & \text{otherwise.}
\end{cases}
$$
Then $\Phi$ refines $\Psi$, and we show $\cU_\cR\models\epsilon^\Phi_\cA$. Note, in particular, that if $a,b\in A^f$ and $d_A(a,b)=\omega_R$ then $\Phi(a,b)=\hat{\Psi}(\omega_R)$.

Let $A=\{a_1,\ldots,a_n\}$, with $A_0=\{a_1,\ldots,a_k\}$ for some $1\leq k\leq n$. Suppose $\bbar\in \cU_\cR$ is such that $\cU_\cR\models C^\Phi_\cA(\bbar)$. If $\bbar_0=(b_1,\ldots,b_k)$ then $\cU_\cR\models C^{\Phi_0}_{\cA_0}(\bbar_0)$ so there is some $c\in \cU_\cR$ such that $\cU_\cR\models K^{\Phi_0}_{\cA_0}(\bbar_0,c)$. By homogeneity of $\cU_\cR$ and Exercise \ref{freeAmal}, we may assume $c\not\in\bbar$ and $c\bbar$ is isometric to the free amalgamation $c\bbar_0\otimes \bbar$. To show $\cU_\cR\models K^\Phi_\cA(\bbar,c)$, it suffices to show $d(b_i,c)>\Phi^-(a_i)$ for all $k<i\leq n$. For this, given $k<i\leq n$, there is some $1\leq j\leq k$ such that $d(b_i,c)=d(b_i,b_j)\p d(b_j,c)$. Since $a_j\in A_0$ and $a_i\in A\backslash A_0$, we have $d_A(a_i,a_j)=\omega_R$ by the claim. Since $\cU_\cR\models C^\Phi_\cA(\bbar)$, we have $d(b_i,c)\geq d(b_i,b_j)>\Phi^-(a_i,a_j)=\hat{\Psi}^-(\omega_R)=\Phi^-(a_i)$.
\end{proof}

Next, we give sufficient conditions for when, in a standard extension scheme $(\cA,f,\Phi)$, $\Phi$ is a good enough approximation of $\cA^f$ to ensure $\cU_\cR\models\epsilon^\Phi_\cA$.

\begin{lemma}\label{extAx}
Suppose $(\cA,f,\Phi)$ is a standard extension scheme such that:
\begin{enumerate}[$(i)$]
\item for all $a,b\in A$, $\Phi^+(a,b)\leq \Phi^+(a)\p \Phi^+(b)$;
\item for all $a,b\in A$ and $t\in R$, if $\Phi^-(a,b)<t$ then $\Phi^-(a)<t\p \Phi^+(b)$.
\end{enumerate}
Then $\cU_\cR\models\epsilon^\Phi_\cA$.
\end{lemma}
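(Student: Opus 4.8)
The plan is to reduce $\cU_\cR\models\epsilon^\Phi_\cA$ to a purely combinatorial one-point extension problem and then solve it with an explicit formula. Write $A=\{a_1,\ldots,a_n\}$, fix a tuple $\bar b=(b_1,\ldots,b_n)$ from $\cU_\cR$ with $\cU_\cR\models C^\Phi_\cA(\bar b)$, and set $d_{ij}=d(b_i,b_j)$; note $d_{ij}\in R$ and $d_{ij}\in\Phi(a_i,a_j)$. It suffices to produce $c\in\cU_\cR$ with $d(b_i,c)\in\Phi(a_i)$ for all $i$. I would obtain $c$ abstractly: if I can find positive values $g_i\in\Phi(a_i)\cap R$ such that $b_i\mapsto g_i$ is an $\cR$-Kat\v{e}tov map on the finite subspace $\{b_1,\ldots,b_n\}$, then attaching a new point $w$ at distances $g_i$ yields a member of $\cK_\cR$. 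By universality this embeds into $\cU_\cR$, and by ultrahomogeneity I may assume the copy of $\{b_i\}$ is $\bar b$ itself; the image of $w$ is the desired $c$.

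The heart of the argument is the choice
$$
g_i=\min_{1\le j\le n}\bigl(d_{ij}\p\Phi^+(a_j)\bigr),\qquad\text{with }d_{ii}=0.
$$
Since $\cR$ is a distance monoid, $R$ is closed under $\p$, so each $g_i\in R$. Taking $j=i$ in the minimum and using unity gives $g_i\le\Phi^+(a_i)$, respecting the upper end of $\Phi(a_i)$. For the lower end, let $j^\ast$ realize the minimum defining $g_i$: if $j^\ast=i$ then $g_i=\Phi^+(a_i)>\Phi^-(a_i)$ because the interval is nonempty (it contains $f(a_i)$); if $j^\ast\ne i$ then $d_{ij^\ast}>0$ forces $\Phi^-(a_i,a_{j^\ast})<d_{ij^\ast}$, and hypothesis $(ii)$ applied with $t=d_{ij^\ast}\in R$ yields exactly $\Phi^-(a_i)<d_{ij^\ast}\p\Phi^+(a_{j^\ast})=g_i$. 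This is precisely where $(ii)$ is used, and it also forces $g_i>0$.

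It then remains to check that $(d_{ik},g_i,g_k)$ is an $\cR$-triangle for all $i,k$. The near inequalities $g_i\le d_{ik}\p g_k$ follow formally from monotonicity and associativity of $\p$ together with the metric triangle inequality $d_{il}\le d_{ik}\p d_{kl}$: writing $g_k=d_{kl^\ast}\p\Phi^+(a_{l^\ast})$, one has $g_i\le d_{il^\ast}\p\Phi^+(a_{l^\ast})\le(d_{ik}\p d_{kl^\ast})\p\Phi^+(a_{l^\ast})=d_{ik}\p g_k$. The far inequality $d_{ik}\le g_i\p g_k$ is where hypothesis $(i)$ enters: writing $g_i=d_{ij^\ast}\p\Phi^+(a_{j^\ast})$ and $g_k=d_{kl^\ast}\p\Phi^+(a_{l^\ast})$, rearranging by commutativity and associativity, and using $(i)$ to bound $\Phi^+(a_{j^\ast})\p\Phi^+(a_{l^\ast})\ge\Phi^+(a_{j^\ast},a_{l^\ast})\ge d_{j^\ast l^\ast}$, one gets $g_i\p g_k\ge d_{ij^\ast}\p d_{j^\ast l^\ast}\p d_{l^\ast k}\ge d_{ik}$ by the triangle inequalities along the path $i\to j^\ast\to l^\ast\to k$.

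The main obstacle is identifying the correct closed form for $g$. The naive guess $g_i=\Phi^+(a_i)$ satisfies the far inequalities (by $(i)$) but can badly violate the near ones, so the values must be shrunk along the shortest admissible ``path'' to each $\Phi^+(a_j)$; the content of $(i)$ and $(ii)$ is exactly that this shrinking neither destroys the far inequalities nor pushes any $g_i$ down to or below $\Phi^-(a_i)$. Associativity of $\p$ (i.e.\ that $\cR$ is a monoid, not merely a magma) is essential throughout, consistent with the standing assumption of the section. Once $g$ is in hand, the universality/ultrahomogeneity step is routine.
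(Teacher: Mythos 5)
Your proof is correct, and it realizes the same underlying strategy as the paper's: build a non-principal $\cR$-Kat\v{e}tov map on $\bar b$ with values in the prescribed intervals $\Phi(a_i)$, using $(i)$ for the ``far'' triangle inequalities and $(ii)$ for the lower bounds $\Phi^-(a_i)<g_i$, then realize the extension point by universality and ultrahomogeneity of $\cU_\cR$. The difference is in the implementation, and it is a genuine simplification. The paper enumerates $A$ with $\Phi^+(a_1)\leq\ldots\leq\Phi^+(a_n)$ and defines $s_k=\min(\{\Phi^+(a_k)\}\cup\{s_i\p d(b_i,b_k):i<k\})$ inductively, taking minima only over earlier indices; verifying the triangle conditions and the lower bound then forces the paper to unwind the recursion along decreasing ``paths'' $i=i_0>i_1>\ldots>i_t$ with $s_i=d(b_{i_0},b_{i_1})\p\ldots\p d(b_{i_{t-1}},b_{i_t})\p\Phi^+(a_{i_t})$, and to split into two cases in the induction. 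Your closed form $g_i=\min_j(d_{ij}\p\Phi^+(a_j))$ (with $d_{ii}=0$) is symmetric in all indices at once, and the key observation making it work is exactly the one you flag: since $d$ already satisfies the triangle inequality on $\bar b$, the one-step relaxation is a fixed point ($d_{il}\p d_{lj}\geq d_{ij}$ shows multi-step paths give nothing smaller), so the paper's path bookkeeping collapses to single applications of $(i)$ (for $d_{ik}\leq g_i\p g_k$, via $\Phi^+(a_{j^*})\p\Phi^+(a_{l^*})\geq\Phi^+(a_{j^*},a_{l^*})\geq d_{j^*l^*}$) and $(ii)$ (for $\Phi^-(a_i)<g_i$, with $t=d_{ij^*}$). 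What the paper's version buys is essentially nothing beyond its internal invariant (2); what yours buys is the elimination of the enumeration ordering, the induction, and the case analysis, at the cost only of noting the degenerate cases $j^*=i$ and $j^*=l^*$ (which you handle, using $\Phi^-(a_i)<f(a_i)\leq\Phi^+(a_i)$ and positivity). One small point worth making explicit: membership $g_i\in R$ uses standardness of the scheme ($\Phi^+(a_j)\in R$) in addition to closure of $R$ under $\p$, and distinctness $b_i\neq b_j$ for $i\neq j$ follows from $d(b_i,b_j)>\Phi^-(a_i,a_j)\geq 0$, so the one-point extension is indeed a legitimate member of $\cK_\cR$.
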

\begin{proof}
Let $\{a_1,\ldots,a_n\}$ be an enumeration of $A$ such that $\Phi^+(a_1)\leq\ldots\leq\Phi^+(a_n)$. Suppose there are $b_1,\ldots,b_n\in \cU_\cR$ such that $\cU_\cR\models C^\Phi_\cA(\bbar)$. We will inductively construct $s_1,\ldots,s_n\in R$ such that:
\begin{enumerate}
\item $\Phi^-(a_k)<s_k\leq \Phi^+(a_k)$ for all $1\leq k\leq n$,
\item for all $1\leq k\leq n$, if $s_k<\Phi^+(a_k)$ then $s_k=s_i\p d(b_i,b_k)$ for some $i<k$,
\item for all $1\leq k\leq n$, if $i<k$ then $(d(b_i,b_k),s_i,s_k)$ is an $\cR$-triangle.
\end{enumerate}

Given this construction, let $g:\bbar\func R$ such that $g(b_i)=s_i$. Then $g\in E^+_{\cR^*}(\bbar,d)$ by $(3)$, with $\Dist(\bbar^g,d)\seq R$. By universality and homogeneity of $\cU_\cR$, there is some $c\in\cU_\cR$ such that $d(b_i,c)=s_i$ for all $1\leq i\leq n$. By $(1)$, $\cU_\cR\models K^\Phi_\cA(\bbar,c)$. Therefore, the above construction finishes the proof.

Let $s_1=\Phi^+(a_1)$. Fix $1<k\leq n$ and suppose we have $s_i$ for $i<k$. Define
$$
s_k=\min(\{\Phi^+(a_k)\}\cup\{s_i\p d(b_i,b_k):i<k\}).
$$
Note that $(2)$ is satisfied. We need to verify $(1)$ and $(3)$.

\noindent\textit{Case 1}: $s_k=\Phi^+(a_k)$. 

Then $(1)$ is satisfied. For $(3)$, note that for any $i<k$, we have
$$
s_k=\Phi^+(a_k)\leq s_i\p d(b_i,b_k)\mand s_i\leq\Phi^+(a_i)\leq\Phi^+(a_k)\leq s_k\p d(b_i,b_k).
$$
So we have left to fix $i<k$ and show $d(b_i,b_k)\leq s_i\p s_k$. Toward this end, we construct a sequence $i=i_0>i_1>\ldots> i_t$, for some $t\geq 0$, such that
\begin{enumerate}[\hspace{10pt}$\bullet$]
\item $s_{i_t}=\Phi^+(a_{i_t})$ and
\item for all $0\leq l<t$, $s_{i_l}=s_{i_{l+1}}\p d(b_{i_l},b_{i_{l+1}})$.
\end{enumerate}
Note that such a sequence exists by $(2)$, and since $s_1=\Phi^+(a_1)$. By construction, we have
$$
s_i=s_{i_0}=d(b_{i_0},b_{i_1})\p\ldots\p d(b_{i_{t-1}},b_{i_t})\p \Phi^+(a_{i_t}).
$$
We also have $d(b_{i_t},b_k)\leq \Phi^+(a_{i_t},a_k)\leq \Phi^+(a_{i_t})\p \Phi^+(a_k)$ by $(i)$. Altogether,
\begin{align*}
d(b_i,b_k) &\leq d(b_{i_0},b_{i_1})\p\ldots\p d(b_{i_{t-1}},b_{i_t})\p d(b_{i_t},b_k)\\
 &\leq d(b_{i_0},b_{i_1})\p\ldots\p d(b_{i_{t-1}},b_{i_t})\p \Phi^+(a_{i_t})\p \Phi^+(a_k)\\
 &= s_i\p s_k.
\end{align*}

\noindent\textit{Case 2}: $s_k=s_i\p d(b_i,b_k)$ for some $i<k$. 

Then, for any $j<k$, using $(3)$ and induction we have
\begin{enumerate}[\hspace{10pt}$\bullet$]
\item $d(b_j,b_k)\leq d(b_i,b_j)\p d(b_i,b_k)\leq s_i\p s_j\p d(b_i,b_k)=s_j\p s_k$,
\item $s_j\leq s_i\p d(b_i,b_j)\leq s_i\p d(b_i,b_k)\p d(b_j,b_k)=s_k\p d(b_j,b_k)$, and
\item $s_k=s_i\p d(b_i,b_k)\leq s_j\p d(b_j,b_k)$,
\end{enumerate}
and so $(3)$ is satisfied. For $(1)$, we must show $\Phi^-(a_k)<s_i\p d(b_i,b_k)$. As in Case 1, we construct a sequence $i=i_0>i_1>\ldots>i_t$ such that
$$
s_i=d(b_{i_0},b_{i_1})\p\ldots\p d(b_{i_{t-1}},b_{i_t})\p \Phi^+(a_{i_t}).
$$
We want to show
$$
\Phi^-(a_k)<d(b_{i_0},b_{i_1})\p\ldots\p d(b_{i_{t-1}},b_{i_t})\p \Phi^+(a_{i_t})\p d(b_i,b_k).
$$
By the triangle inequality, it suffices to show 
$$
\Phi^-(a_k)<d(b_k,b_{i_t})\p\Phi^+(a_{i_t}).
$$
Since $\Phi^-(a_k,a_{i_t})<d(b_k,b_{i_t})$, this follows from $(ii)$.  
\end{proof}

We can now restate and prove Theorem \ref{prethm2}.

\begin{theorem}\label{QE}
Suppose $\cR$ is a countable distance monoid. The following are equivalent.
\begin{enumerate}[$(i)$]
\item $\TUS{\cR}$ has quantifier elimination.
\item $\TUS{\cR}$ admits extension axioms.
\item For all $s\in R$, the map $x\mapsto x\p s$ is continuous from $R^*$ to $R^*$.
\item For all nonzero $\alpha\in R^*$, if $\alpha$ has no immediate predecessor in $R^*$ then, for all $s\in R$, $\alpha\p s=\sup\{x\p s:x<\alpha\}$.
\end{enumerate}
\end{theorem}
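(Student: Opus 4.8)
The plan is to establish the cycle of implications $(i)\Leftrightarrow(ii)$, $(ii)\Leftrightarrow(iii)$, and $(iii)\Leftrightarrow(iv)$, drawing on the machinery of extension axioms developed just above. The equivalence $(iii)\Leftrightarrow(iv)$ should be essentially a topological reformulation: since addition $x\mapsto x\p s$ is monotone, continuity at a point $\alpha$ is controlled entirely by behavior from below at limit points, and $(iv)$ is exactly the statement that the left-hand limit $\sup\{x\p s:x<\alpha\}$ agrees with $\alpha\p s$ at points with no immediate predecessor (the only points where a jump could occur). I would note that upper semicontinuity of $x\mapsto x\p s$ holds automatically by Remark \ref{rem:USC} (as $\cR^*$ is difference-complete by Theorem \ref{S*complete}), so the content of continuity is lower semicontinuity, which is precisely what $(iv)$ asserts.

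For $(i)\Leftrightarrow(ii)$, I would argue that quantifier elimination for $\TUS{\cR}$ amounts to a back-and-forth / one-point-extension property. Using the characterization of quantifier-free $1$-types as Kat\v{e}tov maps (from the Remark following Proposition \ref{saturated2}), and Proposition \ref{saturated2} to realize arbitrary $\cR^*$-metric spaces inside models, I would show that the ability to extend approximate embeddings by one point — which is exactly what the sentences $\epsilon^\Phi_\cA$ encode — is equivalent to QE. The relational, quantifier-free setting makes this a standard amalgamation-style argument: QE holds iff for every finite configuration and every consistent one-point extension type, the extension is realized, and since distances are only visible up to the intervals in $\Int^*(R)$, this realization must be phrased approximately, i.e. via admitting extension axioms.

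The heart of the matter is $(ii)\Leftrightarrow(iii)$, and here Lemma \ref{extAx} is the crucial tool. For the direction $(iii)\Rightarrow(ii)$, given a standard extension scheme $(\cA,f,\Psi)$ (reducing to the standard case by Proposition \ref{bddRed}), I would use continuity of addition to refine $\Psi$ to an $R$-approximation $\Phi$ satisfying the two hypotheses $(i)$ and $(ii)$ of Lemma \ref{extAx}; continuity is exactly what lets me shrink the intervals $\Phi(a)$ around the Kat\v{e}tov values $f(a)$ so that the sum conditions $\Phi^+(a,b)\leq\Phi^+(a)\p\Phi^+(b)$ and the lower-bound condition survive, after which Lemma \ref{extAx} gives $\cU_\cR\models\epsilon^\Phi_\cA$ directly. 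For the converse $(ii)\Rightarrow(iii)$, I would argue contrapositively: if continuity fails, there is $s\in R$ and a limit point $\alpha$ witnessing a jump, i.e. $\sup\{x\p s:x<\alpha\}=:\beta\sle\alpha\p s$. I would then build a specific small extension scheme (a two-point space $\cA$ with a distance near $\alpha$ and a Kat\v{e}tov value near $s$, forcing the new distance into the gap $(\beta,\alpha\p s)$) for which no refinement $\Phi$ can satisfy $\cU_\cR\models\epsilon^\Phi_\cA$, because realizing the extension in $\cU_\cR$ would require a genuine distance strictly between $\beta$ and $\alpha\p s$, which the triangle inequality in $\cU_\cR$ forbids. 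I expect this converse direction to be the main obstacle: one must carefully choose the configuration and verify that every refinement fails, tracking exactly how the discontinuity obstructs the triangle inequality, and this is where the interplay between the order-topological gap and the combinatorics of $\Delta(R,\cR)$ must be made precise.
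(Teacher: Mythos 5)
Your proposal is correct, and all the load-bearing tools coincide with the paper's (Proposition \ref{bddRed}, Lemma \ref{metSol}, Lemma \ref{extAx}, Proposition \ref{saturated2}, density of $R$, and the same two-point discontinuity gadget), but your decomposition of the equivalences is genuinely different. The paper proves the single cycle $(iii)\Rightarrow(ii)\Rightarrow(i)\Rightarrow(iv)\Rightarrow(iii)$, which lets it avoid exactly the two steps you take on directly: it never proves $(i)\Rightarrow(ii)$, and it never refutes extension axioms. Instead, for $(i)\Rightarrow(iv)$ it uses your configuration ($d(a_1,a_2)=\alpha$, Kat\v{e}tov values $s$ and $\alpha\p s$, and $t\in R$ with $\sup\{x\p s:x<\alpha\}\leq t<\alpha\p s$) to produce, via Proposition \ref{saturated2} and a compactness claim, two pairs realizing $p_\alpha$ with different complete types --- a softer conclusion than your claim that \emph{every} refinement $\Phi$ fails $\epsilon^\Phi_\cA$, though your stronger version does go through: any refinement still permits a realized distance $w\in R$ with $w<\alpha$ (density plus no immediate predecessor), and then the triangle inequality $d(b_2,y)\leq w\p s\leq t$ defeats all witnesses. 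What your route costs is the direct implication $(i)\Rightarrow(ii)$, which is more than the ``standard amalgamation-style argument'' you suggest: one must use QE to convert $\exists y\, K^\Psi_\cA(\xbar,y)$ into a quantifier-free formula, realize $\cA^f$ exactly in a model via Proposition \ref{saturated2}, and apply compactness to extract a finite conjunction of atomic distance formulas, i.e.\ some $C^\Phi_\cA(\xbar)$ with $\Phi$ refining $\Psi$ on $A\times A$ and equal to $\Psi$ on the Kat\v{e}tov coordinates, which $\TUS{\cR}$-entails the extension; this step should be spelled out rather than waved at. One small correction in your $(iii)\Rightarrow(ii)$: continuity is not used to shrink the intervals $\Phi(a)$ around the values $f(a)$ (those upper endpoints come from the metric refinement supplied by Lemma \ref{metSol}); it is used to choose the \emph{lower} endpoints $\Phi^-(a,b)$ on pair distances, since $\{x\in R^*:\Phi^-(a)<x\p\Phi^+(b)\}$ is the preimage of an open up-set and hence, being closed upward, equals $R^*$ or $(\beta,\omega_R]$, which allows $\Phi^-(a,b)$ to be picked in $R$ above $\beta$ but below $d_A(a,b)$.
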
   
\begin{proof}
$(iii)\Rightarrow(ii)$: Fix an extension scheme $(\cA,f,\Psi)$. By Proposition \ref{bddRed}, we may assume $(\cA,f,\Psi)$ is standard. By Lemma \ref{metSol}, there is a metric $R$-approximation $\Psi_0$ of $\Dist(\cA^f)$ such that $\Psi_0$ refines $\hat{\Psi}$. We may consider $\Psi_0$ as an $R$-approximation of $\cA^f$, which refines $\Psi$. We define an $R$-approximation $\Phi$ of $\cA^f$ such that $\Phi$ refines $\Psi_0$ and $\cU_\cR\models\epsilon^\Phi_\cA$. By Lemma \ref{extAx}, it suffices to define $\Phi$, refining $\Psi_0$, so that:
\begin{enumerate}[$(1)$]
\item for all $a,b\in A$, $\Phi^+(a,b)\leq\Phi^+(a)\p\Phi^+(b)$,
\item for all $a,b\in A$ and $t\in R$, if $\Phi^-(a,b)<t$ then $\Phi^-(a)<t\p\Phi^+(b)$.
\end{enumerate}

Let $\Phi(a)=\Psi_0(a)$ for all $a\in A$. Given distinct $a,b\in A$, let $\Phi^+(a,b)=\Psi^+_0(a,b)$. Since $\Psi_0$ is metric, we have that for any $a,b\in A$,
$$
\Phi^+(a,b)=\Psi^+_0(a,b)\leq\Psi^+_0(a)\p\Psi^+_0(b)=\Phi^+(a)\p\Phi^+(b),
$$
and so $(1)$ is satisfied. 

Next, we fix $a,b\in A$ and define $\Phi^-(a,b)$ satisfying $(2)$. (Note that, to keep $\Phi$ symmetric, we may need to replace $\Phi^-(a,b)$ with $\max\{\Phi^-(a,b),\Phi^-(b,a)\}$.)

Let $X=\{x\in R^*:\Phi^-(a)<x\p\Phi^+(b)\}$. Then $X$ is the inverse image of $(\Phi^-(a),\omega_R]$ under $x\mapsto x\p\Phi^+(b)$. Since $X$ is closed upward in $R^*$, it follows from $(iii)$ that either $X=R^*$ or $X=(\beta,\omega_R]$ for some $\beta\in R^*$. If $X=R^*$ then $(2)$ is trivially satisfied, and so we may just set $\Phi^-(a,b)=\Psi_0^-(a,b)$. Suppose $X=(\beta,\omega_R]$ for some $\beta\in R^*$. Note that 
$$
\Phi^-(a)<f(a)\leq d_A(a,b)\p f(a)\leq d_A(a,b)\p \Phi^+(b),
$$
and so $d_A(a,b)\in X=(\beta,\omega_R]$. Therefore, by density of $R$, we may choose $u\in R$ such that $\max\{\Psi^-_0(a,b),\beta\}\leq u<d_A(a,b)$, and then set $\Phi^-(a,b)=u$. To verify $(2)$, fix $t\in R$ with $\Phi^-(a,b)<t$. Then $t\in X$, and so $\Phi^-(a)<t\p\Phi^+(b)$, as desired.

$(ii)\Rightarrow(i)$: Fix $M,N\models \TUS{\cR}$ and suppose $A\seq M\cap N$ is a substructure. Fix a quantifier-free formula $\vphi(\xbar,y)$. Suppose that there is $\abar\in A$ and some $b\in M$ such that $M\models\vphi(\abar,b)$. We want to show that there is some $c\in N$ such that $N\models\vphi(\abar,c)$. Without loss of generality, we may assume $\vphi(\xbar,y)$ is a conjunction of atomic and negated atomic formulas. If $b\in \abar$ then we may set $c=b$. So assume $b\not\in\abar$. 

Since $\TMS{R}{\cR}\seq\Th(\cU_\cR)$, we have $\cR^*$-metrics $d_M$ and $d_N$ on $M$ and $N$, respectively (as defined in Theorem \ref{thm:models} and its proof). Let $\cA=(\abar,d_M)$ and define $f:\abar\func R^*$ such that $f(a_i)=d_M(a_i,b)$. Then $f\in E^+_{\cR^*}(\cA)$. Moreover, there is some $R$-approximation $\Psi$ of $\cA^f=(\abar b,d_M)$ such that $\vphi(\xbar,y)$ is equivalent to $C^\Psi_{\abar}(\xbar)\wedge K^\Psi_{\abar}(\xbar,y)$. Since $R$ admits extension axioms, there is an $R$-approximation $\Phi$ of $\cA^f$ such that $\Phi$ refines $\Psi$ and $\cU_\cR\models\epsilon^\Phi_\cA$. Then $N\models C^\Phi_\cA(\abar)$, so there is some $c\in N$ such that $N\models K^\Phi_\cA(\abar,c)$. Since $\Phi$ refines $\Psi$, it follows that $N\models\vphi(\abar,c)$, as desired.

$(i)\Rightarrow(iv)$: Suppose $(iv)$ fails. Fix $s\in R$ and nonzero $\alpha\in R^*$ such that $\alpha$ has no immediate predecessor in $R^*$ and $\sup\{x\p s:x<\alpha\}<\alpha\p s$. By density of $R$, we may fix $t\in R$ such that $\sup\{x\p s:x<\alpha\}\leq t<\alpha\p s$.  By Proposition \ref{saturated2}, there is $M\models \TUS{\cR}$, with $a_1,a_2,b\in M$, such that $d_M(a_1,a_2)=\alpha$, $d_M(a_1,b)=s$, and $d_M(a_2,b)=\alpha\p s$. Define the $\cL_R$-formula
$$
\vphi(x_1,x_2,y):=d(x_1,y)\leq s \wedge d(x_2,y)>t,
$$
and note that $M\models\vphi(a_1,a_2,b)$.

\noindent\textit{Claim:} There is $N\models\Th(\cU_\cR)$, and $a'_1,a'_2\in N$, such that $d_N(a'_1,a'_2)=\alpha$ and $N\models\neg\exists y\vphi(a'_1,a'_2,y)$.

\noindent\textit{Proof}: By compactness it suffices to fix $u,v\in R$, with $u<\alpha\leq v$, and show
$$
\cU_\cR\models \exists x_1 x_2(u<d(x_1,x_2)\leq v\wedge\neg\exists y\vphi(x_1,x_2,y)).
$$
Since $\alpha$ has no immediate predecessor, we may use density of $R$ to fix $w\in R$ such that $u<w<\alpha$. Then $w\p s\leq t$ by choice of $t$. Pick $a'_1,a'_2\in \cU_\cR$ with $d(a'_1,a'_2)=w$. Then $\cU_\cR\models u<d(a'_1,a'_2)\leq v$. If $b'\in\cU_\cR$ is such that $\cU_\cR\models\vphi(a'_1,a'_2,b')$ then
$$
t<d(a'_2,b')\leq d(a'_1,a'_2)\p d(a'_1,b')=w\p d(a'_1,b')\leq w\p s\leq t,
$$
which is a contradiction. So $\cU_\cR\models\neg\exists y\vphi(a'_1,a'_2,y)$.\claim

Let $N$ be as in the claim. Then $M\models \exists y\vphi(a_1,a_2,y)$ and $N\models\neg\exists y\vphi(a'_1,a'_2,y)$. Moreover, $(a_1,a_2)$ and $(a'_1,a'_2)$ both realize $p_\alpha(x_1,x_2)$, and thus have the same quantifier-free type. Therefore $\TUS{\cR}$ does not have quantifier elimination.

$(iv)\Rightarrow(iii)$: Fix $s\in R$ and let $I\seq R^*$ be a sub-basic open interval. Let $X=\{x\in R^*:x\p s\in I\}$. We want to show $X$ is open. Since $\cR^*$ is difference-complete, $x \mapsto x\p s$ is upper semicontinuous (see Remark \ref{rem:USC}), and so we may assume $I=(\beta,\omega_R]$ for some $\beta\in R^*$. If $s>\beta$ then $X=R^*$; so we may assume $s\leq\beta$. Let $\alpha=\sup\{x\in R^*:\beta\geq x\p s\}$. We show $X=(\alpha,\omega_R]$, i.e., $x\leq\alpha$ if and only if $x\p s\leq\beta$. The reverse direction is by definition of $\alpha$. For the forward direction, it suffices to show $\alpha\p s\leq\beta$, and we may clearly assume $\alpha$ is nonzero with no immediate predecessor. By $(iv)$, $\alpha\p s=\sup\{x\p s:x<\alpha\}\leq\beta$.
\end{proof} 

We again recall that, since $\cR^*$ is difference-complete, the maps $x\mapsto x\p s$, for $s\in R$, are always upper semicontinuous. Therefore, quantifier elimination for $\TUS{\cR}$ is precisely equivalent to lower semicontinuity of these maps.

It is also worth observing that one can (carefully) express condition $(iv)$ with first-order properties of $\cR$, although the characterization is more complicated and less intuitive. We leave this as an exercise for the curious reader.

\begin{corollary}
There is a first-order $\LDS$-sentence $\vphi$ such that, for any countable distance monoid $\cR$, $\TUS{\cR}$ has quantifier elimination if and only if $\cR\models\vphi$.
\end{corollary}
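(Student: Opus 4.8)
The plan is to combine Theorem \ref{QE} with a translation argument. By that theorem, $\TUS{\cR}$ has quantifier elimination if and only if condition $(iv)$ holds, so it suffices to produce a single $\LDS$-sentence $\vphi$, not depending on $\cR$, such that $\cR\models\vphi$ if and only if $(iv)$ holds. The difficulty is that $(iv)$ quantifies over $R^*$, and in particular over cuts of $R$ (successor cuts $t^+$ and gap cuts) that need not be elements of $R$; a first-order $\LDS$-sentence may only quantify over $R$. So the real content is to re-express $(iv)$ using only the data of $\cR$, and I would do this by analyzing the negation of $(iv)$, i.e. a \emph{failure}.

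First I would unwind $(iv)$ using the explicit formulas already available. For $\alpha\in R^*$ and $s\in R$, Lemma \ref{lem:explSum} (with $\cR$ a monoid, so $r\p s\in R$) gives $\alpha\p s=\inf\{r\p s:r\in R,\ \alpha\leq r\}$, and by density (Proposition \ref{dense}$(a)$) $\sup\{x\p s:x<\alpha\}=\sup\{x\p s:x\in R,\ x<\alpha\}$. Using Notation \ref{S*expl} one checks that a nonzero $\alpha$ has no immediate predecessor exactly when the down-set $L_\alpha=\{x\in R:x<\alpha\}$ has no maximum. Thus a failure of $(iv)$ is a pair $\alpha,s$ with $L_\alpha$ nonempty and max-less and $\sup_{x\in L_\alpha}(x\p s)<\inf_{r\notin L_\alpha}(r\p s)$. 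Choosing (by density) a $t\in R$ with $\sup_{x\in L_\alpha}(x\p s)\leq t<\inf_{r\notin L_\alpha}(r\p s)$, the monotonicity of $\p$ forces $L_\alpha$ to coincide with the first-order definable level set $\{x\in R:x\p s\leq t\}$. This is the crux of the translation: cuts are replaced by level sets, and a failure becomes a pair $s,t\in R$ for which $\{x:x\p s\leq t\}$ is a proper nonempty down-set with no maximum; conversely every such pair manufactures a genuine failure, with $\alpha$ the associated lower-limit cut.

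The step I expect to be the main obstacle is controlling $\inf\{r\p s:r\p s>t\}$, since a priori this infimum could be a successor cut $t^+\notin R$ and so resist a separating witness $u>t$ in $R$, forcing an awkward case split. The resolving observation is a short lemma: whenever there is some $r$ with $r\p s>t$, this infimum is automatically \emph{strictly} above $t$. Indeed every element of $R$ that exceeds $t$ is bounded below in $R^*$ by $t^+$ (when $t$ has no immediate successor in $R$) or by the immediate successor of $t$ in $R$ (when it does); in either case by a single element lying strictly above $t$. Hence the jump is automatic once the level set fails to attain a maximum, and no auxiliary witness $u$ is needed — this collapses the expected casework into one clean condition.

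Assembling these, failure of $(iv)$ is equivalent to
\[
\exists s\,\exists t\,\Bigl[(\exists x\ x\p s\leq t)\wedge(\exists r\ \neg(r\p s\leq t))\wedge\neg\exists m\bigl(m\p s\leq t\wedge\forall x(x\p s\leq t\to x\leq m)\bigr)\Bigr],
\]
where the three conjuncts say that the level set is nonempty, proper, and has no maximum. Taking $\vphi$ to be the negation of this sentence gives an $\LDS$-sentence, visibly uniform in $\cR$, with $\cR\models\vphi$ iff $(iv)$ holds, iff $\TUS{\cR}$ has quantifier elimination by Theorem \ref{QE}. The remaining verifications — the bijection between nonzero lower-limits and max-less nonempty down-sets, and the identification $L_\alpha=\{x:x\p s\leq t\}$ — are routine but require careful bookkeeping with the cut description of Notation \ref{S*expl}.
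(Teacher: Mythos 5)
Your proof is correct and follows the route the paper intends: the paper derives this corollary directly from Theorem \ref{QE} and explicitly leaves the first-order expression of condition $(iv)$ as an exercise for the reader, and your level-set translation is a valid solution of exactly that exercise. The two delicate points both check out — nonzero $\alpha\in R^*$ without immediate predecessor correspond precisely (via density of $R$ in $R^*$) to max-less downward-closed sets $L_\alpha=\{x\in R:x<\alpha\}$, and the infimum $\alpha\p s=\inf\{r\p s:r\in R,\ \alpha\leq r\}$ is forced strictly above $t$ because every element of $R$ exceeding $t$ lies above the immediate successor of $t$ in $R^*$ — and the one degenerate case you do not mention explicitly, $\alpha=\omega_R\notin R$, is harmless since there $\sup\{x\p s:x<\alpha\}=\omega_R=\alpha\p s$, so $(iv)$ holds automatically and properness of the level set never fails on an actual counterexample.
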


In Section \ref{sec:ex}, we will give a number of natural examples, which illustrate that quantifier elimination for $\TUS{\cR}$ holds in many common situations. For now, we give an example where quantifier elimination fails.

\begin{example}\label{noQE}
Let $\cR=(R,+,\leq,0)$, where $R^{>0}=(\Q\cap[2,\infty))\backslash\{3\}$. Let $X$ be the gap cut $(3,\infty)\cap\Q$. Then $g_X+2=5^+$. Therefore, given $\alpha\in R^*$, $\alpha+2>5$ if and only if $\alpha\geq g_X$, and so $x\mapsto x+2$ is not continuous.
\end{example}

Suppose $\cR$ is a distance monoid, and $\TUS{\cR}$ has quantifier elimination. Using classical results in model theory (see e.g. \cite{Mabook}), we can make some immediate observations. For example, $\TUS{\cR}$ is $\aleph_0$-categorical if and only if $R$ is finite; and $\TUS{\cR}$ is \textit{small} (i.e. has a countable saturated model) if and only if $R^*$ is countable. We end this section with an $\forall\exists$-axiomatization of $\TUS{\cR}$.

\begin{definition}
Suppose $\TUS{\cR}$ has quantifier elimination.
\begin{enumerate}
\item Given an extension scheme $(\cA,f,\Psi)$, let $\Phi$ be an $R$-approximation of $\cA^f$ such that $\Phi$ refines $\Psi$ and $\cU_\cR\models\epsilon^{\Phi}_\cA$. Define $\epsilon(\cA,f,\Psi):=\epsilon^{\Phi}_\cA$.
\item Define $\TAX{\cR}=\TMS{R}{\cR}\cup\{\epsilon(\cA,f,\Psi):\text{$(\cA,f,\Psi)$ is an extension scheme}\}$.
\end{enumerate}
\end{definition}

\begin{theorem}
If $\TUS{\cR}$ eliminates quantifiers then it is axiomatized by $\TAX{\cR}$.
\end{theorem}
\begin{proof}
Since we have defined $\TAX{\cR}$ using the extension axioms resulting from quantifier elimination for $\TUS{\cR}$, we may run the same back-and-forth argument as in the proof of Theorem \ref{QE}[$(ii)\Rightarrow (i)]$ to conclude that $\TAX{\cR}$ also has quantifier elimination. Now fix $M,N\models\TAX{\cR}$ and let $a\in M$, $b\in N$ be singletons. Then $\text{qftp}_M(a)=\text{qftp}_N(b)$ (see Remark \ref{rem:cutaxiom}) and so $\text{tp}_M(a)=\text{tp}_N(b)$.  It follows that $M$ and $N$ are elementarily equivalent, and so we have shown that $\TAX{\cR}$ is complete. 
Since $\TAX{\cR}\seq\TUS{\cR}$, this proves the result.
\end{proof}

\section{Examples}\label{sec:ex}

In this section, we consider examples of Urysohn spaces, which arise naturally in the literature, and we verify that they all have quantifier elimination. We continue to use the extension $\cR^*=(R^*,\p,\leq,0)$ as in the previous section.  

\begin{definition}\label{def:exs} Let $\cR=(R,\p,\leq,0)$ be a countable distance monoid.
\begin{enumerate}
\item $\cR$ is \textbf{right-closed} if, for any nonempty subset $X\seq R$, if $\sup X<\sup R$ then $\sup X\in X$.
\item $\cR$ is \textbf{ultrametric} if $r\p s=\max\{r,s\}$ for all $r,s\in R$.

\item $\cR$ is \textbf{group-like} if, for all $r,s\in R$,
\begin{enumerate}[$(i)$]
\item if $s<r$ and $|r\m s|> \inf R^{>0}$ then $r=|r\m s|\p s$;
\item for all $x\in R$, if $|r\m s|<x$ and $r<\sup R$ then $r<x\p s$.
\end{enumerate}
\end{enumerate}
\end{definition}

\begin{remark}$~$
\begin{enumerate}
\item Note that finite distance monoids are right-closed. Urysohn spaces over finite distance sets in $\R^{\geq0}$ are studied in \cite{vThebook} and \cite{Sadiv} from the perspectives of infinitary Ramsey theory and topological dynamics of isometry groups.

\item Suppose $\cR$ is ultrametric. Then $\cU_\cR$ is an ultrametric space with distance set $R$. An important remark is that, in this case, $\TUS{\cR}$ is essentially the theory of infinitely refining equivalence relations, indexed by $(R,\leq)$. These are very common examples, often used in a first course in model theory to exhibit a variety of behavior in the stability spectrum (see e.g. \cite{Babook}). Moreover, ultrametric Urysohn spaces are actively studied in descriptive set theory and topological dynamics of isometry groups (e.g. \cite{GaCh}, \cite{NVTult}).

\item Group-like distance monoids arise naturally in the following way. Fix a countable ordered abelian group $\cG=(G,+,\leq,0)$, and a convex subset $C\seq G^{>0}$, which is either closed under addition or contains a maximal element. Define $\cR=(R,\p,\leq,0)$, where $R=C\cup\{0\}$ and, given $r,s\in R$, $r\p s=\min\{r+s,\sup C\}$. Then $\cR$ is group-like. These examples appear frequently in the literature in the case when $\cG$ is a countable subgroup of $(\R,+,\leq,0)$, and are often included in the general study of Urysohn spaces (see, e.g., \cite{AKL}, \cite{BiMe}, \cite{Sol}). For instance, the rational Urysohn space and rational Urysohn sphere are each examples of $\cU_\cR$ for a group-like distance monoid $\cR$.
\end{enumerate}
\end{remark}

\begin{proposition}
Suppose $\cR$ is a countable distance monoid. If $\cR$ is right-closed, ultrametric, or group-like, then $\Th(\cU_\cR)$ has quantifier elimination.
\end{proposition}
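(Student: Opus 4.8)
The plan is to invoke the characterization of quantifier elimination in Theorem \ref{QE}. Since $\cR^*$ is difference-complete (Theorem \ref{S*complete}), each map $x\mapsto x\p s$ is upper semicontinuous by Remark \ref{rem:USC}, so it suffices to verify condition $(iv)$ of Theorem \ref{QE}: for every $s\in R$ and every nonzero $\alpha\in R^*$ with no immediate predecessor, $\alpha\p s=\sup\{x\p s:x<\alpha\}$. By monotonicity of $\p$ we always have $\sup\{x\p s:x<\alpha\}\leq\alpha\p s$, so only the reverse inequality is at stake. I will also use the identity $\alpha\p s=\inf\{r\p s:r\in R,\ r\geq\alpha\}$ for $\alpha\in R^*$, $s\in R$, which follows from Lemma \ref{lem:explSum}$(c)$ and $(d)$ (using that $R$ is sum-complete in itself). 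I treat the three hypotheses in turn.

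For the ultrametric case, $\p=\max$ on $R$ extends to $\alpha\p s=\max\{\alpha,s\}$ on $R^*$ (again by Lemma \ref{lem:explSum}), and $x\mapsto\max\{x,s\}$ is continuous in the order topology of any linear order, so condition $(iv)$ is immediate. For the right-closed case, I would first record that right-closedness forces $R^*$ to have no ``interior'' limit points from below: applying the defining property to $X=\{r\in R:r<\alpha\}$ shows that every $\alpha\in R$ with $\alpha<\omega_R$ either is minimal or has an immediate predecessor in $R$, and applying it to the lower part $R\setminus Y$ of a proper gap cut $Y$ shows there are no gap cuts other than possibly $g_\emptyset=\omega_R$. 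Since successor-cut elements $r^+$ have immediate predecessor $r$, the only nonzero element of $R^*$ without an immediate predecessor is $\omega_R$ itself, where $\omega_R\p s=\omega_R=\sup\{x:x<\omega_R\}\leq\sup\{x\p s:x<\omega_R\}$, so $(iv)$ holds trivially.

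The group-like case is where the real work lies, and I would argue by contradiction. Suppose $(iv)$ fails for some $s$ and some nonzero $\alpha$ with no immediate predecessor; write $\gamma=\sup\{x\p s:x<\alpha\}<\alpha\p s$. The degenerate possibility $\alpha\p s=s$ is easily excluded (it forces $x\p s=s$ for all $x<\alpha$, hence $\gamma=s$), so $\alpha\p s>s$ and I may use density of $R$ to pick $t\in R$ with $s<t<\alpha\p s$ and $t\geq\gamma$. By the choice of $\gamma$ and the infimum formula above, this yields two transition facts: (A) $x\p s\leq t$ for every $x<\alpha$, and (B) $r\p s>t$ for every $r\in R$ with $r\geq\alpha$. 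Since $t>s$, the difference $w=|t\m s|$ satisfies $w=\min\{y\in R^*:t\leq s\p y\}$, and (B) together with $\alpha=\inf\{r\in R:r\geq\alpha\}$ gives $w\leq\alpha$. Moreover, since $\alpha$ has no immediate predecessor it cannot lie in $(0,\inf R^{>0}]$ (every element there has $0$ as an immediate predecessor), so $\alpha>\inf R^{>0}$. Now I split on whether $w<\alpha$ or $w=\alpha$. If $w<\alpha$, then using that $R$ is cofinal and unattained below $\alpha$ I can choose $x\in R$ with $w<x<\alpha$; since $t<\omega_R=\sup R$, group-like axiom $(ii)$ of Definition \ref{def:exs} gives $t<x\p s$, contradicting (A). If instead $w=\alpha$, then $|t\m s|=\alpha>\inf R^{>0}$ and $s<t$, so group-like axiom $(i)$ gives $t=|t\m s|\p s=\alpha\p s$, contradicting $t<\alpha\p s$. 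Either way we reach a contradiction, so $(iv)$ holds.

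The main obstacle is this group-like case, and within it the delicate boundary analysis: verifying that $|t\m s|$ sits at or below $\alpha$, ruling out $\alpha\leq\inf R^{>0}$ so that the hypothesis $|t\m s|>\inf R^{>0}$ of axiom $(i)$ is available, and arranging $s<t<\alpha\p s$ so that both axioms $(i)$ and $(ii)$ apply. The book-keeping of when the relevant suprema and infima are attained --- which is exactly where ``no immediate predecessor'' and sum-completeness enter --- is the part requiring the most care.
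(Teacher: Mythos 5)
Your proposal is correct and takes essentially the same route as the paper: both verify condition $(iv)$ of Theorem \ref{QE} case by case, and in the group-like case both run the identical contradiction --- pick $t\in R$ by density with $\sup\{x\p s:x<\alpha\}\leq t<\alpha\p s$, note $|t\m s|\leq\alpha$ and $\alpha>\inf R^{>0}$ (since $\inf R^{>0}$ has immediate predecessor $0$), then kill the case $|t\m s|=\alpha$ with axiom $(i)$ and the case $|t\m s|<\alpha$ with axiom $(ii)$ plus density. The only differences are cosmetic: you explicitly exclude $\alpha\p s=s$ and arrange $s<t$, whereas the paper allows $s\leq r$ and observes that $|r\m s|=\alpha\neq 0$ forces $s<r$ when axiom $(i)$ is invoked.
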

\begin{proof}
We show that, in each case, $\cR$ satisfies Theorem \ref{QE}$(iv)$. Fix nonzero $\alpha\in R^*$, with no immediate predecessor in $R^*$, and $s\in R$. 

If $\cR$ is right-closed then we must have $\alpha=\omega_R$, and so 
$$
\omega_R=\sup\{x:x<\omega_R\}\leq\sup\{x\p s:x<\omega_R\}\leq\omega_R.
$$

If $\cR$ is ultrametric then $\alpha\p s=\max\{\alpha,s\}$. Since $\alpha=\sup\{x:x<\alpha\}$, it follows that $\max\{\alpha, s\}=\sup\{\max\{x,s\}:x<\alpha\}$.

Finally, assume $\cR$ is group-like and suppose, toward a contradiction, that $\sup\{x\p s:x<\alpha\}<\alpha\p s$. By density of $R$, there is $r\in R$ such that $\sup\{x\p s:x<\alpha\}\leq r<\alpha\p s$. Note that $s\leq r<\sup R$. If $|r\m s|=\alpha$ then $|r\m s|> \inf R^{>0}$, since $\inf R^{>0}$ has an immediate predecessor in $R^*$, namely $0$. But then $r=|r\m s|\p s=\alpha\p s$, which contradicts the choice of $r$. Therefore $|r\m s|<\alpha$. By density of $R$, there is $x\in R$ such that $|r\m s|<x<\alpha$. But then $x<\alpha$ and $r<x\p s$, which contradicts the choice of $r$.
\end{proof}

We end this section with a discussion of a particular family of generalized Urysohn spaces, which have been used in previous work to obtain exotic behavior in model theory. First, however, we give a more explicit axiomatization of $\TUS{\cR}$ in the case that $\cR$ is finite.

Note that if $\cR$ is a finite distance monoid, then we have $\cR^*=\cR$. In this case, given $r\in R$ with $r>0$, we let $r^-$ denote the immediate predecessor of $r$. 

\begin{definition}
Suppose $\cR$ is a finite distance monoid. Given a finite $\cR$-metric space $\cA$, the \textbf{canonical $R$-approximation of $\cA$} is the function $\Phi_\cA:A\times A\func \Int(R)$ such that $\Phi_\cA(a,b)=(d_A(a,b)^-,d_A(a,b)]$ for $a\neq b$. If $f\in E^+_{\cR}(\cA)$, we let $\epsilon(\cA,f)$ denote the extension axiom $\epsilon^{\Phi_{\cA^f}}_\cA$.
\end{definition}

If $\cR$ is a finite distance monoid, and $\cA$ is a finite $\cR$-metric space, then $\Phi_\cA$ refines any $R$-approximation of $\cA$. Moreover,  if $f\in E^+_\cR(\cA)$ then $\cU_\cR\models\epsilon(\cA,f)$ (this can be shown directly or via Lemma \ref{extAx}). Altogether, we may define the axiomatization $\TAX{\cR}$ so that, given an extension scheme $(\cA,f,\Psi)$, we set $\epsilon(A,f,\Psi)=\epsilon(\cA,f)$. In particular, the extension axiom for $(\cA,f,\Psi)$ depends only on $\cA$ and $f$. This axiomatization also agrees with the usual $\forall\exists$-axiomatization of $\aleph_0$-categorical \Fraisse\ limits.

We now turn to a specific family of examples. Given $n>0$, set $R_n=\{0,1,2,\ldots,n\}$ and $S_n=\{0,\frac{1}{n},\frac{2}{n},\ldots,1\}$, and let $+_n$ denote addition truncated at $n$. Define the distance monoids $\cR_n=(R_n,+_n\leq,0)$, $\cS_n=(S_n,+_1,\leq,0)$, and $\cQ_1=(\Q\cap[0,1],+_1,\leq,0)$. Note that $\cS_n$ is a submonoid of $\cQ_1$.

In \cite{CaWa}, Casanovas and Wagner construct $T_n$, the theory of the \textit{free $n\uth$ root of the complete graph}, for $n>0$. In particular, $T_1$ is the theory of an infinite complete graph; and $T_2$ is the theory of the random graph. The reader familiar with their work will recognize that, for general $n>0$, $T_n$ is precisely $\TAX{\cR_n}$ (using the canonical extension axioms). In order to form a directed system of first-order theories, Casanovas and Wagner then replace $\cR_n$ with $\cS_n$ and define $T_\infty=\bigcup_{n>0}\TAX{\cS_n}$. We now verify that $T_\infty$ axiomatizes $\TUS{\cQ_1}$, the theory of the rational Urysohn sphere. To do this, we prove the following proposition.

\begin{proposition}
$\TUS{\cQ_1}=\bigcup_{n<\omega}\TUS{\cS_n}$. 
\end{proposition}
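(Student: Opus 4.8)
The plan is to show that the rational Urysohn sphere $\cU_{\cQ_1}$ is itself a model of each finite theory $\TUS{\cS_n}$ (read off its reduct to $\cL_{S_n}$), and to combine this with the fact that every $\cL_{\Q\cap[0,1]}$-sentence involves only finitely many symbols. First note that $\bigcup_{n<\omega}S_n=\Q\cap[0,1]$ (any $a/b\in\Q\cap[0,1]$ lies in $S_b$), that $\cL_{S_n}\seq\cL_{\Q\cap[0,1]}$, and that a single sentence of $\cL_{\Q\cap[0,1]}$ already lies in $\cL_{S_n}$ once $n$ is divisible by the denominators of the finitely many distances it names. Each $\cS_n$ is finite, hence right-closed, so $\TUS{\cS_n}$ has quantifier elimination and is complete and axiomatized by its $\forall\exists$-theory $\TAX{\cS_n}$ (built from the canonical extension axioms $\epsilon(\cA,f)$). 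Thus the proposition reduces to the \emph{Key Claim}: $\cU_{\cQ_1}\models\TAX{\cS_n}$ for every $n$. Granting it, completeness of $\TAX{\cS_n}$ forces $\Th_{\cL_{S_n}}(\cU_{\cQ_1})=\TUS{\cS_n}$; the inclusion $\bigcup_n\TUS{\cS_n}\seq\TUS{\cQ_1}$ is then immediate, and conversely any $\vphi\in\TUS{\cQ_1}$ lies in some $\cL_{S_n}$, whence $\vphi\in\Th_{\cL_{S_n}}(\cU_{\cQ_1})=\TUS{\cS_n}$.

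To prove the Key Claim I treat the two halves of $\TAX{\cS_n}=\TMS{S_n}{\cS_n}\cup\{\epsilon(\cA,f)\}$ separately. Since $\cS_n$ is a submonoid of $\cQ_1$ the truncated sum agrees on $S_n$, so $\TMS{S_n}{\cS_n}=\TMS{S_n}{\cQ_1}$, and $\cU_{\cQ_1}$ is a $\cQ_1$-metric space all of whose distances lie in $[0,1]$. Here (MS1), (MS2) and (MS4) are immediate, and the only nonvacuous instances of (MS3) have $t\geq r+_1 s$, for which $d(a,c)\leq d(a,b)+_1 d(b,c)\leq r+_1 s\leq t$ by the triangle inequality in $\cQ_1$ (alternatively this is $\Delta(\cU_{\cQ_1})\subsetsim\Delta(S_n,\cQ_1)$ together with Proposition \ref{convAx}).

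The substance is the extension axioms. Fix a finite $\cS_n$-metric space $\cA$ and $f\in E^+_{\cS_n}(\cA)$; as the defining inequalities of an $\cS_n$-triangle persist in $\cQ_1$, $\cA$ is a $\cQ_1$-metric space and $f\in E^+_{\cQ_1}(\cA)$. Now $\epsilon(\cA,f)=\epsilon^{\Phi}_\cA$ for the canonical approximation $\Phi=\Phi_{\cA^f}$, with $\Phi(a,b)=(d(a,b)^-,d(a,b)]$ and predecessors taken in $S_n$; this is a standard extension scheme over $\cR=\cQ_1$ (as $1\in\Q\cap[0,1]$), and the plan is to verify the hypotheses of Lemma \ref{extAx}, which then yields $\cU_{\cQ_1}\models\epsilon^\Phi_\cA$. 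Writing $\Phi^+(a,b)=d(a,b)$, $\Phi^+(a)=f(a)$ and $\Phi^-(a)=f(a)^-=f(a)-\tfrac1n$, hypothesis $(i)$, namely $d(a,b)\leq f(a)+_1 f(b)$, is exactly the Kat\v etov inequality for $f$; and for hypothesis $(ii)$ one starts from $t>d(a,b)^-=d(a,b)-\tfrac1n$, invokes $f(a)\leq d(a,b)+_1 f(b)$, and concludes $f(a)^-<t+_1 f(b)$ after a short case split on whether the relevant sums reach the truncation value $1$.

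I expect hypothesis $(ii)$ of Lemma \ref{extAx} to be the only genuine obstacle: it is precisely the point where the coarse grid $S_n$, through which $\cA$ and $f$ are specified, must be reconciled with the finer addition of $\cQ_1$, and it is what certifies that the canonical width-$\tfrac1n$ intervals are tight enough to force the required one-point extension inside $\cU_{\cQ_1}$. The remaining ingredients—cofinality of the $S_n$ in $\Q\cap[0,1]$, nesting of the languages, and completeness of $\TAX{\cS_n}$—are routine bookkeeping.
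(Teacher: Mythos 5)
Your proposal is correct and follows essentially the same route as the paper: reduce the statement to $\cU_{\cQ_1}\models\TAX{\cS_n}$ for all $n$, handle $\TMS{S_n}{\cQ_1}$ via Proposition \ref{convAx}, and verify the canonical extension axioms through Lemma \ref{extAx}, with hypothesis $(i)$ being the Kat\v{e}tov inequality and hypothesis $(ii)$ the only computation of substance. Your sketched verification of $(ii)$ (from $t>d_A(a,b)-\tfrac{1}{n}$ and $f(a)\leq d_A(a,b)+_1 f(b)$, conclude $f(a)-\tfrac{1}{n}<t+_1 f(b)$, using $f(a)-\tfrac{1}{n}<1$ to dispose of truncation) is exactly the paper's arithmetic $i-1\leq k-1+j<ns+j$ in rational rather than integer form, and the concluding completeness-plus-finite-language bookkeeping matches the paper's final paragraph.
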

\begin{proof}
We first fix $n>0$ and show $\TAX{\cS_n}\seq\TUS{\cQ_1}$. Note that $\Delta(\cU_{\cQ_1})\subsetsim\Delta(S_n,\cQ_1)$ (see Example \ref{ex:metDense}), and so $\TMS{S_n}{\cQ_1}\seq \TUS{\cQ_1}$ by Proposition \ref{convAx}. Therefore, we must fix a finite $\cS_n$-metric space $\cA$ and $f\in E^+_{\cS_n}(\cA)$, and show $\cU_{\cQ_1}\models\epsilon(\cA,f)$. In particular, we use Lemma \ref{extAx}. Let $\Phi$ be the canonical $S_n$-approximation of $\cA^f$. Given distinct $a,b\in A$, we clearly have $\Phi^+(a,b)\leq\Phi^+(a)+_1\Phi^+(b)$. Next, fix $a,b\in A$ and $s\in \Q\cap[0,1]$ with $\Phi^-(a,b)<s$. Let $d_A(a,b)=\frac{k}{n}$, $f(a)=\frac{i}{n}$, and $f(b)=\frac{j}{n}$, where $0<i,j,k\leq n$. Then we have $s>\frac{k-1}{n}$, and we want to show $\frac{i-1}{n}<s+_1\frac{j}{n}$.
We obviously have $\frac{i-1}{n}<1$, so it suffices to show $i-1<ns+j$. Since $f\in E^+_{\cS_n}(\cA)$, we have $i\leq k+j$, and so $i-1\leq k-1+j<ns+j$, as desired.   

We now have $\bigcup_{n<\omega}\TUS{\cS_n}\seq \Th(\cU_{\cQ_1})$. Since $\TUS{\cS_n}$ is a complete $\cL_{S_n}$-theory for all $n>0$, and $\cL_{\Q\cap[0,1]}=\bigcup_{n>0}\cL_{S_n}$, the desired result follows.
\end{proof}

Casanovas and Wagner remark that saturated models of $T_\infty$ could be treated as metric spaces with ``nonstandard" distances in $(\Q\cap[0,1])^*$, but it is not observed that $T_\infty$ is the theory of such a classical structure. The main result of \cite{CaWa} is that $T_\infty$ does not eliminate hyperimaginaries. In particular, let $E(x,y)=\{d(x,y)\leq r:r\in\Q\cap(0,1]\}$ be the type-definable equivalence relation describing infinitesimal distance. Then the $E$-equivalence class of any singleton (in some sufficiently saturated model) is a non-eliminable hyperimaginary. In Section 7 of the sequel to this paper \cite{CoDM2}, we generalize their methods in the setting of an arbitrary countable distance monoid $\cR$ and, assuming quantifier elimination, obtain necessary conditions for elimination of hyperimaginaries in $\Th(\cU_\cR)$. Along the way, we also characterize weak elimination of imaginaries for $\Th(\cU_\cR)$. 

In \cite{CaWa}, Casanovas and Wagner show that $\TUS{\cQ_1}$ is non-simple and without the strict order property. In \cite{CoTe}, it is shown that the theory of the complete Urysohn sphere in continuous logic has the strong order property, but not the fully finitary strong order property. In \cite{CoDM2}, we refine and extend these methods to characterize the neostability theoretic behavior of $\Th(\cU_\cR)$, for any countable distance monoid $\cR$ such that $\Th(\cU_\cR)$ has quantifier elimination.

\section*{Acknowledgments} 
This work was done while under the supervision of my thesis advisor, David Marker. I would also like to thank John Baldwin, Ward Henson, and Christian Rosendal for helpful conversations, as well as Julien Melleray for first introducing me to the four-values condition and the work in \cite{DLPS}. Finally, I thank the anonymous referee for improving the exposition.

\bibliography{/Users/gabrielconant/Desktop/Math/BibTex/biblio}
\bibliographystyle{amsplain}

\end{document}